\documentclass[12pt]{article}
\usepackage{amsmath,amssymb,comment}
\newtheorem{theorem}{Theorem}
\newtheorem{proposition}{Proposition}
\newtheorem{lemma}{Lemma}
\newtheorem{corollary}{Corollary}
\newtheorem{definition}{Definition}

\newtheorem{example}{Example}
\newtheorem{algorithm}{Algorithm}
\numberwithin{equation}{section}
\newenvironment{proof}{\noindent Proof:}{$\Box$}

\newcommand{\N}{{\mathbb N}}
\newcommand{\Q}{{\mathbb Q}}

\newcommand{\C}{{\mathbb C}}
\newcommand{\Z}{{\mathbb Z}}
\newcommand{\ord}{{\mathrm{ord}}}
\newcommand{\Ann}{{\mathrm{Ann}}}
\newcommand{\pp}{{\mathfrak{p}}}
\newcommand{\qq}{{\mathfrak{q}}}

\newcommand{\Dsc}{{\mathcal D}}
\newcommand{\DscYX}{{\mathcal D}_{Y\rightarrow X}}
\newcommand{\DYX}{D_{Y\rightarrow X}}

\newcommand{\Bsc}{{\mathcal B}}
\newcommand{\Csc}{{\mathcal C}}
\newcommand{\Fsc}{{\mathcal F}}

\newcommand{\Gsc}{{\mathcal G}}
\newcommand{\Osc}{{\mathcal O}}
\newcommand{\OC[1]}{{\mathcal O}_{\C^{#1}}}
\newcommand{\DC[1]}{{\mathcal D}_{\C^{#1}}}
\newcommand{\DR[1]}{{\mathcal D}_{\R^{#1}}}
\newcommand{\Lsc}{{\mathcal L}}
\newcommand{\Isc}{{\mathcal I}}
\newcommand{\Jsc}{{\mathcal J}}
\newcommand{\Hsc}{{\mathcal H}}

\newcommand{\Msc}{{\mathcal M}}

\newcommand{\V}{{\mathbb V}}

\newcommand{\Spec}{\mbox{{\rm Spec}\,}}

\newcommand{\gr}{\mbox{{\rm gr}}}
\newcommand{\mvec}{{\bf m}}

\newcommand{\dx}{{\partial_x}}
\newcommand{\dy}{{\partial_y}}
\newcommand{\dz}{{\partial_z}}
\newcommand{\dt}{{\partial_t}}
\title{Indicial polynomials and $b$-functions
of $D$-modules along arbitrary varieties and their computation}
\author{Toshinori Oaku
}

\begin{document}
\maketitle

\begin{abstract}      
  We define an indicial polynomial of a $D$-module along an arbitrary
  subvariety as a generalization of both the classical indicial polynomial
  for a single linear differential equation
  and the Bernstein-Sato polynomial of a variety
  defined by Budur-Musta\c t\~a-Saito.
  An indicial polynomial is also a generalization of the $b$-function of
  a $D$-module along a submanifold and can be used in the computation of
  the $D$-module theoretic inverse image by the embedding instead of
  the $b$-function. 
  We consider properties of indicial polynomials and
  relations with $b$-functions.
  An indicial polynomial may exist even if the $b$-function does not,
  and gives the set of the roots of the $b$-function if it exists. 
  Computation of an indicial polynomial is easier than the $b$-function
  and naturally includes the case with parameters.
\end{abstract}

\section{Introduction---Definitions and basic properties}
\label{section:introduction}


We first define an indicial polyomial of a $D$-module along a submanifold
as a generalization of the $b$-function.
This notion can be generalized to an arbitrary subvariety,
possibly with singularities and/or multiplicities,
by using the technique introduced
by Budur, Musta\c t\~a, Saito \cite{BMS}.
This enables us to compute an indicial polynomial and the $b$-function
of an algebraic $D$-module along an arbitrary algebraic set.

Our interest in indicial polynomials 
comes from the following observations: 
\begin{enumerate}
\item
Compared with $b$-functions, indicial polynomials can exist
for a broader class of $D$-modules which are not necessarily holonomic
(see Example \ref{ex:nonholonomic}). 
\item
An indicial polynomial of a $D$-module
should give some information on the solutions of the $D$-module
as in the classical case of a single equation (see, e.g., \cite{Tahara},
\cite{Mandai}) 
if the $D$-module has regular singularities along a submanifold $Y$,
at least in case of codimension one. 
\item
  An indicial polynomial can be used for a concrete expression of 
  the $D$-module theoretic inverse image by the embedding of a
  submanifold even if the $b$-function does not exist
  (see Theorems \ref{th:inv}, \ref{th:algebraic_inv}, Examples
  \ref{ex:Fuchs}, \ref{ex:nonholonomic}). 
\item
  The computation of an indicial polynomial is easier than that of the
  $b$-function and gives the roots of the $b$-function if it exists.
  This serves as a, at least heuristic, method for computing the $b$-function
  with an associated stratification for e.g., holonomic $D$-modules,
  for which the existence of the $b$-function is assured
  (see Example \ref{ex:BMS}). 
\item
  Many important examples of holonomic systems
  such as $A$-hypergeometric systems 
  have parameters.
  It would be preferable for some purposes to regard parameters as
  variables.
  In that case,  it should be  natural to consider indicial polynomials instead of   $b$-functions (see Example \ref{ex:AHG}). 
\end{enumerate}

In this introduction, we restrict ourselves to the case of submanifold
and study general properties of indicial polynomials. 
Let us first recall definition of the $b$-function  
of a section of a $D$-module along a submanifold. 
Let $X$ be an $n$-dimensional complex manifold 
and $Y$ a complex analytic submanifold of $X$. 
Let $\Osc_X$ be the sheaf of holomorphic functions on $X$ and 
$\Isc_Y$ be the defining ideal of $Y$, which is a sheaf of ideals of 
$\Osc_X$. 
We denote $\Dsc_X$ the sheaf of (rings of) differential operators on $X$ 
with holomorphic coefficients. 

The {\em $V$-filtration} $\{V^j_Y(\Dsc_X)\}_{j\in\Z}$ of $\Dsc_X$ along $Y$ 
is the filtration on the sheaf $\Dsc_X|_Y$, which is the sheaf-theoretic 
restriction of $\Dsc_X$ to $Y$,  defined by
\[
 V^j_Y(\Dsc_X) := \{ P \in \Dsc_X|_Y \mid P\Isc_Y^i \subset \Isc_Y^{i-j} 
\quad (\forall i \in\Z) \}
\]
with the convention $\Isc_Y^j = \Osc_X|_Y$ for $j \leq 0$. 
Let $\theta$ be a vector field on a neighborhood in $X$ 
of each point of $Y$ 
which induces the identity map on $\Isc_Y/\Isc_Y^2$. 
In local coordinates $x = (x_1,\dots,x_d,x_{d+1},\dots,x_n)$ such that 
$Y = \{x_1=\cdots=x_d=0\}$, we may take 
\[
\theta = x_1\frac{\partial}{\partial{x_1}}+\cdots
+ x_d\frac{\partial}{\partial{x_d}}.
\]
Note that $\theta$ is uniquely determined modulo a vector field
which belongs to $V_Y^{-1}(\Dsc_X)$.

\begin{definition}\label{def:b-function}\rm
Let $\Msc$ be a coherent left $\Dsc_X$-module defined on $X$. 
Let $u$ be a section of $\Msc$ defined on a neighborhood of $p \in Y$. 
The 
{\em $b$-function} of $u$ along $Y$ at $p$ 
is the monic polynomial $b(s)$, if any, 
in an indeterminate $s$ of the least degree
such that
\[
 b(\theta)u \in V^{-1}_Y(\Dsc_X)_{p}u 
\]
holds, that is, there exists a germ $P$ of $\in V^{-1}_Y(\Dsc_X)$ at $p$ 
such that 
\[
(b(\theta)+P)u = 0. 
\]
If there exists such $b(s)$ for any $u \in \Msc_p$, then $\Msc$
is called {\em specializable} along $Y$ at $p$. 
\end{definition}

Define an ideal $\Jsc_Y(u)$ of the sheaf of rings $\Osc_Y[s]$ on $Y$ by 
\[
\Jsc_Y(u) := \{f(s,x'') \in \Osc_Y[s] \mid f(\theta, x'')u
\in V_Y^{-1}(\Dsc_X)u \}, 
\]
where $x''$ is a local coordinate system of $Y$. 
Then the $b$-function at $p$ is the monic generator of the ideal
$\Jsc_Y(u)_p \cap \C[s]$. 
The $b$-function of a general  $D$-module was introduced
with an existence proof for holonomic $D$-modules
by Kashiwara and Kawai \cite{KK1980}.
See also Laurent \cite{Laurent}. 
An algorithm to compute the $b$-function of a $D$-module
defined by differential operators with polynomial coefficients 
along a linear submanifold was given in \cite{OakuDuke} and
\cite{OTalgDmod}.

\begin{example}\rm
  Let $X = \C$ and 
  consider the $\Dsc_X$-module
$\Msc = \Dsc_X/\Dsc_X(x\partial_x^2 + (a+bx)\partial_x)$ with $u$ the residue
class of $1\in \Dsc_X$ in $\Msc$.
Then the $b$-function of $u$ along $\{0\}$ is $s(s+a-1)$, where we regard
$a,b \in \C$ as constants. 
\end{example}

The $b$-function is sometimes called the indicial polynomial
in connection with the classical theory of linear ordinary differential equations as in the example above. 
Here we propose to define the indicial polynomial as a slight generalization
of the $b$-function. 

\begin{definition}\label{def:indicial}\rm
Let $\Msc$ be a coherent left $\Dsc_X$-module defined on $X$. 
Let $u$ be a section of $\Msc$ defined on a neighborhood of $p \in Y$. 
Let $x'' = (x_{d+1},\dots,x_n)$ be a local coordinate system of $Y$
around $p$. 
An {\em indicial polynomial} of $u$ along $Y$ at $p$
is an element $b(s,x'')$ of $(\Osc_Y)_p[s]$ with the smallest degree and monic 
in $s$  
such that $b(s,x'') \in \Jsc_Y(u)_p$, i.e., 
\[
 b(\theta,x'')u \in V^{-1}_Y(\Dsc_X)_{p}u. 
\]
Then there exists a germ $P$ of $\in V^{-1}_Y(\Dsc_X)$ at $p$ 
such that 
\[
(b(\theta,x'')+P)u = 0. 
\]
\end{definition}

\begin{example}\rm
  Let $X = \C^2$ and $(x,y)$ be its coordinates.
  Consider the $\Dsc_X$-module
  $\Msc = \Dsc_X/\Dsc_X(x\partial_x^2 + y\partial_x + \partial_y^2)$
  with $u$ the residue
class of $1\in \Dsc_X$ in $\Msc$.
Then $s(s+y-1) \in (\Osc_Y)_p[s]$ is the indicial polynomial of $u$ 
along $Y = \{0\}\times \C$ at $p = (0,y_0)$. 
There is no $b$-function of $u$ along $Y$ at $p$. 
\end{example}

If $d=1$, 
this definition coincides with that of Baouendi-Goulaouic \cite{BG} 
for a single partial differential equation $Pu=0$ which is called
Fuchsian along $Y$.
The equation in the above example is Fuchsian along $Y$. 
The (multi-valued) analytic solutions around $Y$
of a Fuchsian equation $Pu = 0$ 
are described in terms of the indicial polynomial
as was shown by Tahara \cite{Tahara} under some condition on
the indicial polynomial, and then by Mandai \cite{Mandai}
in full generality. 

If $d=1$, the condition that there exists an indicial polynomial
of $u$ along $Y$ is called `elliptic along $Y$'
by Laurent and Schapira \cite{LS}.
In case $d > 1$, their ellipticity condition is weaker than
the existence of an indicial polynomial. 
They proved that every cohomology group of the inverse image to $Y$
of a $D$-module which is elliptic along $Y$ is coherent over $\Dsc_Y$. 
Laurent and Monteiro-Fernandes \cite{LM} defined Fuchsian $D$-modules 
along $Y$, which are, roughly speaking, elliptic and regular along $Y$.

An indicial polynomial coincides with the $b$-function if $Y$
is a point. 
An indicial polynomial is not uniquely determined in general; 
moreover, it does not necessarily coincide with
the $b$-function {\em even if $\Msc$ is holonomic.}

\begin{example}\rm 
Consider a system of differential equations
\[
(x\partial_x -y)u = y^2 u = 0
\]
on $X = \C^2 \ni (x,y)$ and let $\Msc = \Dsc_X u$ be the corresponding
$\Dsc_X$-module.
Then $\Msc$ is holonomic. 
It is easy to see that  $s -y +a(y)y^2$ is an indicial polynomial of $u$
along $Y = \{0\} \times \C$ at $(0,0)$ 
with an arbitrary germ $a(y)$ of $\Osc_Y$ at $0$,
while the $b$-function of $u$ along $Y$ at $(0,0)$ is $s^2$.
In fact $(x\partial_x)^2u = 0$ holds. 
On the other hand, at $(0,y_0)$ with $y_0 \neq 0$, 
the indicial polynomial and the $b$-function are both $1$.  
\end{example}

However, we have the following uniqueness.

\begin{proposition}
  In the notation of Definition \ref{def:indicial},
  assume that an indicial polynomial $b(s,x'')$ of $u$ along $Y$ at $p$ exists.
  Then
\begin{enumerate}
\item 
  $b(s,p) \in \C[s]$ is uniquely determined.
\item
  If $f(s,x'') \in (\Osc_Y)_p[s]$ belongs to $\Jsc_Y(u)_p$, then
  $f(s,p)$ is a multiple of $b(s,p)$ in $\C[s]$. 
  In particular, one has
    \[
  \V(\Jsc_Y(u)|_{x''=p}) :=
  \{ s \in \C \mid f(s,p) = 0 \,(\forall f(s,x'') \in \Jsc_Y(u)_p) \}
  = \{s \in \C \mid b(s,p) = 0\}. 
  \]
\end{enumerate}
\end{proposition}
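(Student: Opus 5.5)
The plan is a Weierstrass-division argument in $(\Osc_Y)_p[s]$ with respect to the monic polynomial $b$, combined with the minimality of its degree. Let $m$ be the degree in $s$ of the indicial polynomial $b(s,x'')$. By definition, $m$ is the least degree in $s$ of a monic element of $\Jsc_Y(u)_p$.

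First I will record that $\Jsc_Y(u)_p$ is an ideal of $(\Osc_Y)_p[s]$. It is closed under addition because $V_Y^{-1}(\Dsc_X)_p u$ is an additive group. For multiplication, take $g(s,x'')$ and $f\in\Jsc_Y(u)_p$. Then $g(\theta,x'')f(\theta,x'')u \in g(\theta,x'')V^{-1}_Y(\Dsc_X)_p u$, and since $\theta, x''\in V^0_Y(\Dsc_X)$ and $V^0\cdot V^{-1}\subset V^{-1}$, this lies in $V^{-1}_Y(\Dsc_X)_p u$. The one point to check is that $f\mapsto f(\theta,x'')$ is multiplicative up to $V^{-1}$. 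This holds because $[\theta,x_j]=0$ for $j>d$ with the coordinate choice $\theta=\sum_{i\le d}x_i\partial_{x_i}$, so $\theta$ commutes with the $x''$ and the substitution is an honest ring homomorphism. (Had $\theta$ been chosen differently modulo $V^{-1}$, the correction terms would lie in $V^{-1}$ anyway.)

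Next, let $f\in\Jsc_Y(u)_p$. Since $b$ is monic in $s$, division with remainder in $(\Osc_Y)_p[s]$ gives
\[
f = q\,b + r,\qquad \deg_s r < m .
\]
Then $r = f - qb \in \Jsc_Y(u)_p$. The main obstacle is that $r$ need not be monic, so minimality of $m$ does not immediately force $r=0$. Indeed $r$ need not vanish; in the example above, $y^2 \in \Jsc_Y(u)$. What I want instead is $r(s,p)=0$.

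To get this, write $r=\sum_{k<m} r_k(x'')s^k$ and suppose some $r_k(p)\neq0$. Let $k_0$ be the largest such index. For $k>k_0$ the coefficient $r_k$ vanishes at $p$, so $r_k$ lies in the maximal ideal $\mathfrak m$ of $(\Osc_Y)_p$. Then $r_{k_0}$ is a unit, and multiplying by its inverse we may assume $r_{k_0}=1$. Consider
\[
b' := r - \sum_{k>k_0} r_k s^{k-k_0} r .
\]
This is still in the ideal, and the top coefficients are now in $\mathfrak m^2$. Iterating and using Krull/formal completeness will not terminate in finitely many steps. So instead I will use the cleaner route of the Weierstrass preparation theorem in the variable $s$. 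Alternatively, and more simply, I will argue algebraically: the element $r$ with $r(s,p)$ of exact degree $k_0<m$ is an $\mathfrak m$-small perturbation of a polynomial of degree $k_0$ in the top coefficients. I then multiply $r$ by $1+$ suitable elements to make it monic of degree $k_0$ modulo high powers of $\mathfrak m$. The honest tool is Weierstrass division for convergent power series in $(s-s_0)$ around roots, but I would first try the following trick. Because the higher coefficients $r_k$ ($k>k_0$) vanish at $p$, the polynomial $r$ is ``$s$-regular of order $k_0$'' at infinity. Setting $t=1/s$ and applying the Weierstrass preparation theorem to $t^{m-1}r(1/t,x'')$ at $(t,x'')=(0,p)$, which has order $m-1-k_0$ in $t$ after factoring, produces a monic polynomial in $\Jsc_Y(u)_p$ of degree $k_0<m$. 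This contradicts minimality. Hence $r(s,p)=0$ and $f(s,p)=q(s,p)b(s,p)$, which proves (2).

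Statement (1) follows from (2): if $b$ and $b'$ are two indicial polynomials, each of $b(s,p)$ and $b'(s,p)$ divides the other, and both are monic of degree $m$, so they coincide. For the equality of zero sets, the inclusion $\supseteq$ is (2). The inclusion $\subseteq$ holds because $b$ itself belongs to $\Jsc_Y(u)_p$.
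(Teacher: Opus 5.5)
\textbf{There is a genuine gap at the key step.} Your overall structure agrees with the paper's. You divide $f$ by the monic $b$, note that the remainder $r$ lies in $\Jsc_Y(u)_p$ with $\deg_s r<m$, show $r(s,p)=0$, and deduce (1) from (2). Your check that $\Jsc_Y(u)_p$ is an ideal is also fine. But the one nontrivial step, that such an $r$ satisfies $r(s,p)=0$, is not proved.

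Carried out correctly, Weierstrass preparation of $t^{m-1}r(1/t,x'')$ followed by $t=1/s$ gives a factorization $r=c\,\tilde b\,w$. Here $c$ is a unit of $(\Osc_Y)_p$, $\tilde b$ is monic of degree $k_0$ in $s$, and $w=1+c_1(x'')s+\cdots+c_e(x'')s^e$ with $c_i(p)=0$. This exhibits $\tilde b$ as a \emph{divisor} of $r$, not as an element of $\Jsc_Y(u)_p$. Since $w$ is not a unit of $(\Osc_Y)_p[s]$, it cannot be cancelled inside the ideal.

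Your argument uses $b$ only through the inequality $k_0<m$. As written, it would therefore prove that any ideal containing some $r$ with $r(s,p)\neq0$ contains a monic polynomial of degree $\deg r(s,p)$. That is false. The ideal of $\C\{x\}[s]$ generated by $1+xs$ contains no monic polynomial at all, yet $r=1+xs$ has $r(s,0)=1$. Your earlier attempts (pushing the top coefficients into $\mathfrak m^2,\mathfrak m^3,\dots$, or making $r$ monic modulo high powers of $\mathfrak m$) have the same defect. As you note yourself, they also do not terminate.

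\textbf{The missing idea is to use the monic element $b$ itself to cancel the top coefficients.} The paper does exactly this. Suppose $f\in\Jsc_Y(u)_p$ has $\deg_s f=d<m$ and leading coefficient $a$. Then $a(p)=0$ by minimality. The element $g:=a\,b-s^{m-d}f$ lies in $\Jsc_Y(u)_p$, has $\deg_s g<m$, and satisfies $g(s,p)=-s^{m-d}f(s,p)$. So if $f(s,p)\neq0$, the degree of the specialization at $p$ strictly increases while staying $\le m-1$. After finitely many steps one reaches an element of $s$-degree $<m$ whose leading coefficient does not vanish at $p$, contradicting minimality.

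Alternatively, your factorization can be rescued with the same input. The ring $(\Osc_Y)_p[s]/(b)$ is a finite algebra over the local ring $(\Osc_Y)_p$, so $\mathfrak m$ lies in its Jacobson radical and $w$ is invertible modulo $b$. Hence $c\,\tilde b\in\Jsc_Y(u)_p+(b)=\Jsc_Y(u)_p$. This gives a monic element of degree $k_0<m$ in $\Jsc_Y(u)_p$, the desired contradiction.
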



\begin{proof} 
1. An indicial polynomial $b(s,x'')$ can be written in the form
\[
b(s,x'') = s^m + b_1(x'')s^{m-1} + \cdots + b_m(x'')
\]
with $b_i(x'') \in (\Osc_Y)_p$.
By the definition, $b(s,x'')$ 
is an element of $J := \Jsc_Y(u)_p$ in this form with the smallest $m$. 

It suffices to show that if the degree in $s$ of an element $f(s,x'')$ of
$J$
is less than $m$ then $f(s,p) = 0$ holds.
Let us prove this claim by induction on $m - \deg f(s,p)$.
Let $a(x'')s^d$ be the leading term of $f(s,x'')$ with respect to $s$.
If $a(p) \neq 0$, then $f(s,x)/a(x'')$ is monic and belongs to $J$.
This is a contradiction since $d < m$. Thus we have $a(p) = 0$. 

First assume  $\deg f(s,p) = m-1$. Then $d = m-1$ holds and
the coefficient of $s^d$ in $f(s,p)$ is $a(p) = 0$.
This is a contradiction. 
Next assume $\deg f(s,p) = k \leq m-2$. 
Then
  \[
  g(s,x'') := a(x'')b(s,x'') - s^{m-d}f(s,x'')
  \]
  belongs to $J$ and is of degree less than $m$ in $s$. 
Moreover, 
  $\deg g(s,p) = k+m-d > k$ holds since $a(p) = 0$. 
By the induction hypothesis $g(s,p) = 0$ holds, which implies $f(s,p) = 0$.

2. Assume $f(s,x'')$ belongs to $\Jsc_Y(u)$.
Dividing $f(s,x'')$ by $b(s,x'')$ in $(\Osc_Y)_p[s]$, write
\[
  f(s,x'') = q(s,x'')b(s,x'') + r(s,x'')
\]
with the degree of $r(s,x'')$ in $s$ less than $m$.
Then $r(s,p)$ vanishes by the argument above. 
This completes the proof. 
\end{proof}

Here is a general relation between the $b$-function and an indicial polynomial.

\begin{proposition}
  Assume that the $b$-funtion $b(s)$ of $u$ along $Y$ at $p \in Y$ exists.
  Then
  \begin{enumerate}
\item    
  An indicial polynomial $\tilde b(s,x'')$ of $u$ along $Y$ at $p$ exists
  and $\tilde b(s,p)$ divides $b(s)$ in $\C[s]$.
\item
  The set of the zeros of $\tilde b(s,p)$ coincides with
  that of $b(s)$. 
\end{enumerate}
\end{proposition}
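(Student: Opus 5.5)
Part 1 should follow from the definitions together with the preceding proposition. Part 2 is where the real work is: I plan to decompose by generalized eigenvalues of $\theta$ and then apply Nakayama's lemma.

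\emph{Part 1.} Since $b(s)\in\C[s]\subset(\Osc_Y)_p[s]$ is monic and lies in $J:=\Jsc_Y(u)_p$, the set of elements of $J$ that are monic in $s$ is nonempty. An element of smallest degree among them exists and is by definition an indicial polynomial $\tilde b(s,x'')$. Now apply part 2 of the preceding proposition to $f(s,x''):=b(s)\in J$. Then $f(s,p)=b(s)$ is a multiple of $\tilde b(s,p)$ in $\C[s]$. This gives the divisibility, and hence the inclusion of the zero set of $\tilde b(s,p)$ in that of $b(s)$.

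\emph{Part 2: setup.} It remains to show that every root $\alpha$ of $b(s)$ is a root of $c(s):=\tilde b(s,p)$. Consider
\[
N:=V^0_Y(\Dsc_X)_p u\,/\,V^{-1}_Y(\Dsc_X)_p u ,
\]
and let $[u]$ be the class of $u$ in $N$. The elements $\theta$ and $x''_j$ lie in $V^0$, commute modulo $V^{-1}$, and preserve $V^{-1}_p u$. Hence $N$ is a module over $(\Osc_Y)_p[\theta]$.

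\begin{itemize}
\item By definition, $b(s)$ generates the annihilator of $[u]$ in $\C[\theta]$, and $\tilde b(\theta,x'')[u]=0$.
\item Let $L:=(\Osc_Y)_p[\theta][u]\subset N$. Since $b(\theta)[u]=0$ with $b$ monic, $L$ is a finitely generated $(\Osc_Y)_p$-module.
\item Write $b(s)=\prod_\beta (s-\beta)^{k_\beta}$ and use the polynomial idempotents $e_\beta(\theta)$ from the Chinese remainder theorem. These split $L=\bigoplus_\beta L_\beta$, where $L_\beta=e_\beta(\theta)L$. Each $L_\beta$ is a finitely generated $(\Osc_Y)_p$-module, stable under $\theta$ and $x''$, and generated over $(\Osc_Y)_p[\theta]$ by $u_\beta:=e_\beta(\theta)[u]$.
\end{itemize}

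\emph{Part 2: the Nakayama step.} Suppose, for contradiction, that $c(\alpha)\neq 0$ for some root $\alpha$ of $b$. Let $\mathfrak m$ be the maximal ideal of $(\Osc_Y)_p$. Expanding in $x''-p$, write
\[
\tilde b(s,x'')=c(s)+\sum_j (x''_j-p_j)\,c_j(s,x'').
\]
Applying $e_\alpha(\theta)$ to $\tilde b(\theta,x'')[u]=0$ gives $c(\theta)u_\alpha\in\mathfrak m L_\alpha$. On $L_\alpha$ the operator $\theta-\alpha$ is nilpotent, because $(\theta-\alpha)^{k_\alpha}$ kills $L_\alpha$. Since $c(\alpha)\neq0$, the operator $c(\theta)$ is therefore invertible on $L_\alpha$, with inverse a polynomial in $\theta$. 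That polynomial commutes with $\mathfrak m$, so $u_\alpha\in\mathfrak m L_\alpha$. Applying powers of $\theta$ then gives $L_\alpha=\mathfrak m L_\alpha$, and Nakayama's lemma yields $L_\alpha=0$, so $u_\alpha=0$.

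\emph{Part 2: contradiction.} Then $[u]=\sum_{\beta\neq\alpha}u_\beta$ is annihilated by $\prod_{\beta\neq\alpha}(s-\beta)^{k_\beta}$, a polynomial of smaller degree than $b$. This contradicts the minimality of $b$.

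The main obstacle is this Nakayama step. One must check carefully that $N$, or at least $L$, really is a module over the commutative ring $(\Osc_Y)_p[\theta]$, i.e.\ that $[x''_j,\theta]\in V^{-1}$ acts trivially on the graded piece, and that $L$ is finite over $(\Osc_Y)_p$ so that Nakayama applies.
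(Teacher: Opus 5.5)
Your proof is correct. Part 1 is the paper's argument: the paper repeats the division-with-remainder step from the proof of Proposition 1, which is exactly the part 2 of that proposition you cite.

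For Part 2 you take a genuinely different route. The paper stays at the level of ideals. It collects all roots of $b(s)$ not shared by $\tilde b(s,p)$ into $b_1(s)$ and sets $b_0:=b/b_1$. It notes that $\tilde b(s,x'')$ and $b_1(s)$ both lie in the colon ideal $J:b_0(s)$, and hence so does the resultant $\mathrm{Res}_s(\tilde b(s,x''),b_1(s))$. Since both polynomials are monic in $s$, this resultant specializes at $p$ to $\mathrm{Res}(\tilde b(s,p),b_1(s))\neq 0$. So the colon ideal contains a unit of $(\Osc_Y)_p$, giving $b_0\in J$ and contradicting minimality.

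You instead pass to the cyclic module $L\simeq(\Osc_Y)_p[s]/J$, which is finite over $(\Osc_Y)_p$ because $b$ is monic. You split it into generalized eigenspaces of $\theta$ and kill the $\alpha$-summand by Nakayama.

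Both arguments establish the same local fact: if $\tilde b(\alpha,p)\neq0$, then $\tilde b(s,x'')$ and a power of $s-\alpha$ generate the unit ideal of $(\Osc_Y)_p[s]$. The paper certifies this by an explicit elimination element, which fits its computational outlook. Your argument is structural. It shows that the roots of $b$ are the generalized eigenvalues of $\theta$ on $L$, and by Nakayama these are the same as the generalized eigenvalues on the fibre $L/\mathfrak m L\simeq\C[s]/(\tilde b(s,p))$. Your Nakayama step never uses the monicity of $\tilde b$, since finiteness comes from $b$. So it works verbatim with any $f\in J$ in place of $\tilde b$.

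The point you flagged as the main obstacle is harmless. With $\theta=x_1\partial_1+\cdots+x_d\partial_d$, $\theta$ commutes exactly with every germ in $(\Osc_Y)_p$. Also, $V^0_Y(\Dsc_X)_p$ preserves $V^{-1}_Y(\Dsc_X)_pu$. So $N$ is honestly a module over the commutative ring $(\Osc_Y)_p[s]$.
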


\begin{proof} 
  (1)
  Set $J := \Jsc_Y(u)_p$, which is an ideal of $(\Osc_Y)_p[s]$. 
  The existence of $\tilde b(s,x'')$ follows from
  $b(s) \in (\Osc_Y)_p[s] \cap J$.
  Dividing $b(s)$ by $\tilde b(s,x'')$ in $(\Osc_Y)_p[s]$, we obtain
  \[
  b(s) = q(s,x'')\tilde b(s,x'') + c(s,x'')
  \]
  with $q(s,x''),c(s,x'') \in (\Osc_Y)_p[s]$ such that
  the degree of $c(s,x'')$ in $s$ is less than that of $\tilde b(s,x'')$.
  By the proof of the preceding proposition, $c(s,p) = 0$ holds
  since $c(s,x'') \in J$.

  (2) 
  Let $s_1,\dots,s_d$ be the distinct zeros of $\tilde b(s,p)$. 
Let us show that $b(s)$ has no other zeros. 
Suppose this is not the case and
  let $s_{d+1},\dots, s_{d+k}$ be the distinct zeros of $b(s)$
  other than $s_1,\dots,s_d$, with multiplicities  $m_1,\dots ,m_k$.  
Set
\[
b_1(s) :=(s-s_{d+1})^{m_1}\cdots (s-s_{d+k})^{m_k}, \quad
b_0(s) := b(s)/b_1(s).
\]
Then $b_1(s)$ and $\tilde b(s,x)$ both belongs to the ideal
\[
 B := J : b_0(s) = \{f(s,x'') \in (\Osc_Y)_p[s] \mid f(s,x'')b_0(s) \in J \}
\]
of $(\Osc_Y)_p[s]$. 
Let
\[
r(x) := \mathrm{Res}(\tilde b(s,x),b_1(s))
\]
be the resultant with respect to $s$.
Then $r(x)$ belongs to $B$.
Since $\tilde b(s,x)$ and $b_1(s)$ are monic in $s$, 
\[
r(p) = \mathrm{Res}(\tilde b(s,p),b_1(s))
\]
is a nonzero constant since $\tilde b(s,p)$ and $b_1(s)$ have no common zeros.
This implies $B = (\Osc_Y)_p[s]$ and hence $b_0(s)$ belongs to $J$,
which is a contradiciton.
\end{proof}

\begin{corollary}
  If there exists an indicial polynomial of $u$ along $Y$ at $p$
  that belongs to $\C[s]$, then it is the $b$-function of $u$ at $p$. 
\end{corollary}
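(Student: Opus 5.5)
Let $b_0(s)\in\C[s]$ be an indicial polynomial of $u$ along $Y$ at $p$, of degree $m$. The plan is to show that $b_0(s)$ satisfies the defining property of the $b$-function in Definition \ref{def:b-function}: it is monic, it lies in $\Jsc_Y(u)_p\cap\C[s]$, and it has the least degree among such polynomials.

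First, $b_0(s)$ is monic in $s$, since it is an indicial polynomial. It has constant coefficients, so it is an element of $\C[s]$. By Definition \ref{def:indicial}, $b_0(\theta)u\in V^{-1}_Y(\Dsc_X)_p u$, hence $b_0(s)\in\Jsc_Y(u)_p\cap\C[s]$. In particular the $b$-function $b(s)$ of $u$ along $Y$ at $p$ exists, and it is the monic generator of this ideal of $\C[s]$. Consequently $b(s)$ divides $b_0(s)$.

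Second, I will show that $b_0(s)$ divides $b(s)$. Since $b(s)\in\Jsc_Y(u)_p$, part 2 of the first proposition applies with $f(s,x'')=b(s)$. It gives that $f(s,p)=b(s)$ is a multiple of $b_0(s,p)=b_0(s)$ in $\C[s]$. Alternatively, part 1 of the preceding proposition gives directly that $\tilde b(s,p)=b_0(s)$ divides $b(s)$.

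Both $b(s)$ and $b_0(s)$ are monic and each divides the other, so they are equal. This proves the corollary. There is no real obstacle here. The only point needing care is that an indicial polynomial is defined as minimal in degree among monic elements of the larger ring $(\Osc_Y)_p[s]$, so minimality over $\C[s]$ alone does not settle the question. That is why the divisibility coming from the earlier propositions is needed.
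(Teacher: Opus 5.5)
Your argument is correct and is essentially the deduction the paper intends: the corollary is stated without proof right after Proposition 2, whose part 1 gives that $\tilde b(s,p)$ divides $b(s)$, while $b_0\in\Jsc_Y(u)_p\cap\C[s]$ gives the reverse divisibility. Your closing remark overstates the difficulty, though: since $b(s)$ is itself a monic element of $\Jsc_Y(u)_p\subset(\Osc_Y)_p[s]$, the minimality of $\deg_s b_0$ already forces $\deg b\geq\deg b_0$, so $b\mid b_0$ alone finishes the proof without Propositions 1 or 2.
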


\section{Indicial polynomials and  $b$-functions with respect to coherent ideals} 
\label{section:variety}

We follow the method by Budur-Musta\c t\~a-Saito \cite{BMS}
of defining the Bernstein-Sato polynomial for an arbitrary subvariety, possibly with multiplicities, in a slightly more general settings. 

Let $X$ be an $n$-dimensional complex manifold and 
let $\Fsc$ be a coherent sheaf of ideals of $\Osc_X$. 
Let $p$ be a point of the support of $\Osc_X/\Fsc$. 
Let $f_1,\dots,f_d$ be a set of local generators of $\Fsc$ on 
a Stein open neighborhood  $U$ of $p$.
Then $f_i(p) = 0$ holds for $i=1,\dots,d$.

Consider the associated graph embedding 
\[
\iota: U \ni x \longmapsto (x,f_1(x),\dots,f_d(x)) \in U \times \C^d .
\]
Then 
\[
Z := \iota(U) = \{(x,t_1,\dots,t_d) \in U \times \C^d  \mid
t_i = f_i(x) \quad (i=1,\dots,d)\}
\]
is a (non-singular) closed submanifold of $\C^d \times U$, 
which depends on the choice of local generators $f_1,\dots,f_d$ of $\Fsc$.  
Let $\Isc_Z$ be the defining ideal of $Z$ and set
\[
\Gamma_{[Z]}(\Gsc) := \{ u \in \Gsc \mid \Isc_Z^ku = 0
\quad(\exists k \in \N)\}
\]
for a $\Osc_{U\times\C^d}$-module $\Gsc$. 
Let $\Bsc_{Z|U \times \C^d} := \Hsc^d_{[Z]}(\Osc_{U\times \C^d})$ be  
the $d$-th algebraic local (or relative) cohomology group
of $\Osc_{U\times\C^d}$ supported by $Z$,
where $\Hsc^d_{[Z]}$ is the $d$-th right derived functor of
$\Gamma_{[Z]}$. 
Then $\Bsc_{Z|U \times \C^d}$  has a natural strucuture of 
holonomic left $\Dsc_{X\times\C^d}$-module (see Kashiwara \cite{KashiwaraII},
\cite{KashiwaraBook}). 
Suppose that $\Msc$ is a coherent left $\Dsc_X$-module defined on $U$.
Then the $D$-module theoretic direct image of $\Msc$ associated with
the embedding $\iota$ is the left $\Dsc_{U\times \C^d}$-module  
\[
 \iota_*(\Msc) := \Msc\otimes_{\Osc_X|_U}\Bsc_{Z|U\times \C^d}. 
 \]
 The functor $\iota_*$ is exact and sends a coherent (resp.\ holonomic)
 $\Dsc_U$-module to a coherent (resp.\ holonomic) $\Dsc_{U\times \C^d}$-module.

Let $u$ be a section of $\Msc$ defined on a neighborhood of $p$. 
Then   
\[
\iota_*(u) := u\otimes\delta(t_1-f_1)\cdots\delta(t_d-f_d)
\]
is defined as a section of $\iota_*(\Msc)$.
Here $\delta(t_1-f_1)\cdots\delta(t_d-f_d)$ is the residue
class
\[
\left[\frac{1}{(t_1-f_1)\cdots (t_d-f_d)} \right]
\in 
 \Osc_{U\times \C^d}[h^{-1}]
  /\sum_{j=1}^d \Osc_{U\times \C^d}[h_j^{-1}]
\simeq \Hsc^d_{[Z]}(\Osc_{U\times\C^d})
  \]
with $h := (t_1-f_1)\cdots(t_d-f_d)$ and
$h_j := h/(t_j-f_j)$. 
If $\Msc$ is generated by $u$, then $\iota_*(\Msc)$ is generated by
$\iota_*(u)$. 
See Algorithm \ref{alg:embedding} for a concrete description of
$\iota_*(\Msc)$ in this case. 
The first statement of the following theorem is due to
Budur-Musta\c t\~a-Saito \cite{BMS}. 

\begin{theorem}\label{th:BMS}
In the notation above
\begin{enumerate}
\item    
Assume that there exists the $b$-function $b(s)$ of $\iota_*(u)$ along
$U \times \{0\}$ at $\iota(p)$. 
Then $b(s-d)$ does not depend on the choice of local generators 
$f_1,\dots,f_d$ of $\Fsc$. 
We call $b(s-d)$ the {\em $b$-function of $u$} at $p$  
with respect to  $\Fsc$.  
\item    
The set
\begin{multline*}
B_p(u,\Fsc) := \{b(s,x) \in (\Osc_X)_p[s] \mid
\\
\mbox{$b(s+d,x)$ is an indicial polynomial of $\iota_*(u)$ along
  $U \times \{0\}$ at $\iota(p)$} \}
\end{multline*}
depends only on $u$ and $\Fsc$.
We call an element of $B_p(u,\Fsc)$ 
an {\em indicial polynomial of $u$} at $p$  
with respect to  $\Fsc$.  

\item
  If $b(s,x) \in B_p(u,\Fsc)$ and the degree of $c(s,x) \in \Fsc_p[s]$ in
  $s$ is less than that of $b(s,x)$, 
  then $b(s,x)+c(s,x)$ also belongs to $B_p(u,\Fsc)$. 
 
\end{enumerate}
\end{theorem}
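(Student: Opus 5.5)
The plan is to follow the Budur--Musta\c t\~a--Saito argument. Any two systems of generators can be compared through a third one that contains both, so it suffices to handle two elementary changes of generators. The first is appending a redundant generator: $(f_1,\dots,f_d)$ versus $(f_1,\dots,f_d,f_{d+1})$ with $f_{d+1}=\sum_i a_i f_i$, $a_i\in\Osc_X$. The second is an invertible change of generators, $f'_i=\sum_j g_{ij}f_j$ with $(g_{ij})$ invertible near $p$. Composing these, one passes from $(f)$ to $(f,f')$ and then to $(f')$.

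\textbf{Invertible change.} Consider the automorphism $\phi(x,t)=(x,g(x)t)$ of $U\times\C^d$. It maps the graph of $f$ to the graph of $f'$ and preserves $U\times\{0\}$. Therefore it preserves the $V$-filtration along $U\times\{0\}$ and sends $\iota_*(u)$ to $\iota'_*(u)$, up to the unit factor $\det g$. Since $\det g$ has order $0$ in $V$, it does not affect either $b$-functions or indicial polynomials. The Euler field $\theta=\sum t_j\partial_{t_j}$ is carried to itself modulo $V^{-1}$. Hence $f(\theta,x)\iota_*(u)\in V^{-1}\iota_*(u)$ if and only if the same holds for $\iota'_*(u)$. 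So the ideals $\Jsc$ coincide, and both the $b$-function and the set of indicial polynomials are unchanged. Here we use that an indicial polynomial of $\iota_*(u)$ may be taken with coefficients in $(\Osc_{U})_p$: coefficients that are functions of $x$ alone suffice because $Y=U\times\{0\}\cong U$.

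\textbf{Adding a redundant generator.} First apply the invertible change $t_{d+1}\mapsto t_{d+1}-\sum a_i t_i$ on $U\times\C^{d+1}$, which reduces to the case $f_{d+1}=0$. Then $\iota_*(u)$ for $(f,0)$ equals $\iota^{(d)}_*(u)\otimes\delta(t_{d+1})$, an external product with $\delta(t_{d+1})$. For $\delta(t)$ one has $t\partial_t\delta=-\delta$, and $V^{-1}\delta(t)=\{0\}$-type terms arise only from multiplication by $t$, which kills $\delta$. Thus $\theta_{d+1}=\theta_d+t_{d+1}\partial_{t_{d+1}}$ acts on the product as $\theta_d-1$, modulo operators that respect the filtration.

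The claim is then that $b(s)\in\Jsc$ for $(f)$ if and only if $b(s-1)$ lies in $\Jsc$ for $(f,0)$. One direction is immediate from the product formula. For the converse, decompose an element of $V^{-1}_{\C^{d+1}}$ in powers of $\partial_{t_{d+1}}$ and $t_{d+1}$. Applying the restriction map $\delta(t_{d+1})$-coefficient, i.e.\ taking the component along $\Dsc\,\iota_*\otimes\delta$ and using $t_{d+1}\delta=0$, projects to a relation with an operator in $V^{-1}_{\C^d}$. This identifies $\Jsc_{(f,0)}(s)=\Jsc_{(f)}(s+1)$. The normalization $b(s+d,x)$ absorbs exactly this shift, and this gives statements 1 and 2 simultaneously.

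\textbf{Main obstacle and statement 3.} The main obstacle is this converse projection step: one must check carefully that the $V$-filtration on $\iota_*\Msc\otimes\delta(t_{d+1})$ splits in the expected way, where the $\partial_{t_{d+1}}^k\delta$ components have strictly lower $V$-order. I would write elements in the normal form $\sum_k w_k\otimes\partial_{t_{d+1}}^k\delta(t_{d+1})$ and use the fact that $\partial_{t_{d+1}}^k\delta$ has $V$-degree $-k$ relative to $\delta$.

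For statement 3, write $c=\sum_i f_i c_i(s,x)$ with $\deg_s c_i<\deg_s b$. On $\iota_*(u)$, multiplication by $f_i(x)$ equals multiplication by $t_i$, because $(t_i-f_i)\delta(t-f)=0$. Since $t_i$ commutes with $\theta$ up to shifting $s$, we get $t_i c_i(\theta+d,x)=c_i(\theta+d-1,x)t_i\in V^{-1}$. Hence $c(\theta+d,x)\iota_*(u)\in V^{-1}\iota_*(u)$. It follows that $b(s+d,x)+c(s+d,x)$ is monic of the same minimal degree and lies in $\Jsc$, so it is again an indicial polynomial.
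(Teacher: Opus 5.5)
Your proposal is correct and follows essentially the paper's route. The paper likewise reduces to appending a redundant generator $f_{d+1}=\sum a_if_i$: it writes $\tilde\iota=\iota'\circ\iota$ with $\iota'(x,t)=(x,t,\sum a_it_i)$, normalized to $(x,t,0)$ by a fiber-linear coordinate change. It then compares $\iota_*(u)$ with $\iota_*(u)\otimes\delta(t_{d+1})$ and gets the converse by extracting the $\delta(t_{d+1})$-component, where only $Q_{00}\in V^{-1}$ in $Q=\sum Q_{ij}\partial_{t_{d+1}}^i t_{d+1}^j$ matters (Lemma~\ref{lemma:BMS}). Statement 3 comes from $(t_i-f_i)\iota_*(u)=0$, as in your argument. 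Your general invertible-change step is harmless but not needed beyond that unipotent normalization and permutations of the $t_i$.

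One sign slip to fix: since $\theta_{d+1}$ acts as $\theta_d-1$ on $w\otimes\delta(t_{d+1})$, the correct correspondence is $b(s)\in\Jsc_{(f)}$ iff $b(s+1)\in\Jsc_{(f,0)}$, not $b(s-1)$. It is this direction that the normalization $b(s+d,x)$ absorbs.
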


\begin{proof}
Suppose that $\{f_1,\dots,f_d,f_{d+1}\}$ is also a set of local generators of 
$\Fsc$ around $p$. 
Then there exist sections $a_1,\dots,a_d$ of $\Osc_X$ defined on 
a Stein open neighborhood $V \subset U$ of $p$ such that
$f_{d+1} = a_1f_1 + \cdots + a_df_d$.
We may assume $V = U$ for the sake of the simplicity of the notation. 
Define an embedding
\[
\iota'\,:\, U \times \C^d \longrightarrow U \times \C^{d+1}
\]
by 
\[
\iota'(x,t_1,\dots,t_d) = (x,t_1,\dots,t_d,a_1(x)t_1 +\cdots+ a_d(x)t_d). 
\] 
Recall the definitions of $\iota : U \rightarrow U \times \C^d$ and 
\[
Z := \iota(U) = \{(x,t_1,\dots,t_d) \in V \times \C^d 
\mid t_i = f_i(x)\,(i=1,\dots,d)\}. 
\]
Define the embedding
\[
\tilde\iota \,:\, V \ni x \longmapsto
(x,f_1(x),\dots,f_d(x),f_{d+1}(x)) \in V \times \C^{d+1}. 
\]
Then $\tilde\iota = \iota'\circ\iota$ holds. This implies
$\tilde \iota_*(u) = \iota'_*(\iota_*(u))$. 
Thus the assertions 1,2 follow from the next lemma.
The assertion 3 follows from $(t_i - f_i(x))\iota_*(u) = 0$
for $i=1,\dots,d$. 
\end{proof}

\begin{lemma}\label{lemma:BMS}
  Let $\Msc$ be a coherent left $\Dsc_X$-module and $u$ a section of $\Msc$
  defined on a complex manifold $X$. 
Let $Y$ be a non-singular complex hypersurface of $X$ and let 
$\iota' : X \rightarrow X \times \C$ be a holomorphic embedding
defined by $\iota'(x) = (x,f(x))$ for $x \in X$ with a holomorphic
function $f$ on $X$.
Let $p$ be a point of $Y$ such that $f(p) = 0$.
Let $x''$ be a local coordinate system of $Y$ around $p$.
We regard $x''$ also as a local coordinate system of $\iota'(Y) \simeq Y$
around $\iota'(p) = (p,0)$. 
\begin{enumerate}
\item
Let $b(s)$ be the $b$-function of $u$ along $Y$ at $p$ and 
$b'(s)$ be that of $\iota'_*(u)$ along $\iota'(Y)$ at $(p,0)$. 
Then one has $b'(s-1) = b(s)$. 
In particular $b(s)$ exists if and only if $b'(s)$ exists. 
\item
  Let $b(s,x'') \in (\Osc_Y)_p[s]$
  be an indicial polynomial
  of $u$ along $Y$ at $p$.
  Then $b(s+1,x'')$ is an indicial polynomial of 
  $\iota'_*(u)$ along $\iota'(Y)$ at $(p,0)$.
Conversely, if $b'(s,x'') \in (\Osc_{\iota'(Y)})_p[s]$ is an indicial polynomial
of $\iota'_*(u)$ along $\iota'(Y)$ at $(p,0)$, 
then $b'(s-1,x'')$ is an
indicial polynomial of $u$ along $Y$ at $p$. 
\end{enumerate}
\end{lemma}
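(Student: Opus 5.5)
Throughout, work in local coordinates $x=(x_1,x'')$ near $p$ with $Y=\{x_1=0\}$. Then $\theta=x_1\partial_{x_1}$ on $X$, and $f$ is a holomorphic function with $f(p)=0$. Let $t$ be the coordinate of $\C$ and set $\delta:=\delta(t-f(x))$, so that $v:=\iota'_*(u)=u\otimes\delta$ lives in $\Msc\otimes\Bsc$. The submanifold $\iota'(Y)$ has codimension two in $X\times\C$, with defining functions $x_1,t$ and coordinates $x''$. Its Euler field is $\theta'=x_1\partial_{x_1}+t\partial_t$.

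The plan is to transport the relevant $V$-filtrations in both directions through a change of coordinates. In $X\times\C$ use $(x_1,x'',\tau)$ with $\tau:=t-f(x)$. Then $\iota'(X)=\{\tau=0\}$ and $\iota'(Y)=\{x_1=\tau=0\}$. In these coordinates $\Dsc_{X\times\C}$ is generated by $\partial_{x_1}^{new}=\partial_{x_1}+f_{x_1}\partial_t$, $\partial_{x''}^{new}$ and $\partial_\tau=\partial_t$. Moreover $v$ is $u\otimes\delta(\tau)$, and $\iota'_*\Msc=\Msc[\partial_\tau]\delta(\tau)$ with $\tau\delta=0$. Rewriting the Euler field gives
\[
\theta'=x_1\partial_{x_1}+t\partial_t \equiv x_1\partial^{new}_{x_1}+\tau\partial_\tau \pmod{V^{-1}}.
\]
Indeed, $t=\tau+f$ with $f\in(x_1,\tau)$ modulo functions of $x''$. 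The key point is that $f(p)=0$ only guarantees $f|_{Y}$ vanishes at $p$, not on all of $Y$, so this reduction has to be checked carefully. I expect this to be the main obstacle: $\theta'$ is only determined modulo $V^{-1}$ vector fields, and one must verify that the discrepancy $f(0,x'')\partial_t$ is harmless. I would handle it by noting that one may choose the Euler field adapted to the coordinates $(x_1,\tau)$. Since the problem concerns $\iota'(Y)$ with defining ideal $(x_1,t)=(x_1,\tau+f)$, I would use the ideal directly and take Euler field $x_1\partial_{x_1}^{new}+\tau\partial_\tau$ only after replacing $t$ by $\tau$, which is legitimate precisely when $f|_Y\equiv0$. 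If $f|_Y\not\equiv 0$, then $\iota'(Y)$ should be read as $\{(y,f(y))\}$, whose ideal is $(x_1,\tau)$, and everything goes through.

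With this normalization, apply $\tau\partial_\tau\,\delta(\tau)=-\delta(\tau)$. This gives $\theta' v\equiv(\theta-1)u\otimes\delta$ modulo $V^{-1}v$, hence $b(\theta'+1,x'')v\equiv b(\theta,x'')u\otimes\delta$.

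\emph{Forward direction.} Suppose $(b(\theta,x'')+P)u=0$ with $P\in V^{-1}_Y(\Dsc_X)$. Lift $P$ to $\tilde P$ by replacing $\partial_{x_i}$ with $\partial^{new}_{x_i}$; then $\tilde P\in V^{-1}_{\iota'(Y)}$. This gives $(b(\theta'+1,x'')+Q)v=0$ with $Q\in V^{-1}$. Hence $b(s+1,x'')$ lies in $\Jsc_{\iota'(Y)}(v)$.

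\emph{Backward direction.} Suppose $(b'(\theta',x'')+Q)v=0$. Every element of $\iota'_*\Msc$ is uniquely $\sum_k w_k\otimes\partial_\tau^k\delta$, and $V$-degrees add: $\partial_\tau^k\delta$ has weight $k$, since $\tau\partial_\tau$ acts by $-1-k$. Expand $Q=\sum_{j,k}Q_{jk}(x,\partial_x)\tau^j\partial_\tau^k$ and keep only the $\partial_\tau^0\delta$ component. Only terms with $j=k$ contribute, and $\tau^k\partial_\tau^k\delta$ is a constant multiple of $\delta$. The condition $Q\in V^{-1}$ forces the resulting operator $\sum_k c_kQ_{kk}$ on $u$ to lie in $V^{-1}_Y(\Dsc_X)$. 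Therefore $b'(\theta-1,x'')u\in V^{-1}_Y(\Dsc_X)u$.

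This shows $\Jsc_{\iota'(Y)}(v)$ and $\Jsc_Y(u)$ correspond, up to the shift $s\mapsto s+1$, on elements of $(\Osc_Y)_p[s]$. Minimal degree monic elements therefore correspond, which proves part 2. Part 1 follows by intersecting with $\C[s]$.
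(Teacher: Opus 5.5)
Your proof is correct and is essentially the paper's. The paper's opening reduction to $\iota'(x)=(x,0)$ is exactly your substitution $\tau=t-f(x)$. Its forward direction reuses the same operator acting on $u\otimes\delta$, as you do. For the converse it expands $Q=\sum Q_{ij}(x,\partial)\partial_t^it^j$ in anti-normal order, so that only $Q_{00}\in V_Y^{-1}(\Dsc_X)$ contributes to the $\delta$-component; this plays the role of your normal-ordered $\sum_k(-1)^kk!\,Q_{kk}$.

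The case distinction on $f|_Y$ that you call the main obstacle is unnecessary. The submanifold $\iota'(Y)$ is by definition the image of $Y$, and its ideal is always $(x_1,t-f(x))=(x_1,\tau)$, so $x_1\partial^{new}_{x_1}+\tau\partial_\tau$ is always an admissible Euler field. By contrast, $x_1\partial_{x_1}+t\partial_t$ is not one unless $f|_Y\equiv0$. Also, the paper runs the argument with $Y=\{x_1=\cdots=x_d=0\}$, since it applies the lemma in arbitrary codimension; your proof carries over verbatim with $\sum_{i\le d}x_i\partial_{x_i}$ in place of $x_1\partial_{x_1}$.
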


\begin{proof}
  We may assume $X$ is an open subset of $\C^n$ with $p=0\in \C^n$, 
\[
Y = \{x = (x_1,\dots,x_d,x_{d+1},\dots,x_n) \in X \mid
x_1 = \cdots = x_d = 0 \}, 
\] 
  and $\iota'(x) = (x,0)$.
  Then we have $\iota'_*(u) = u\otimes \delta(t)$ and 
\[
\iota'(Y) = \{(x,t) \in X\times\C \mid x_1=\cdots = x_d = t = 0\}
= Y \times \{0\} 
\]
with $x'' = (x_{d+1},\dots,x_n)$ a system coordinate of $Y$. 
We use the notation  $\partial = (\partial_1,\dots,\partial_n)$,
$\partial_i = \partial/\partial x_i$, $\partial_t = \partial/\partial t$.

Let us prove assertion 2. 
There exists $Q \in V_Y^{-1}(\Dsc_X)_p$ such that
\[
(b(x_1\partial_1+\cdots+ x_d\partial_d,x'') + Q)u = 0. 
\]
Then we have
\[
(b(x_1\partial_1+\cdots+ x_d\partial_d+\partial_tt,x'') + Q)\iota'_*(u)
= 0
\]
and $Q$ belongs to $V_{\iota'(Y)}^{-1}(\Dsc_{X\times\C})_{(p,0)}$. 
This implies that $b(s+1,x'')$ is an indicial polynomial of $\iota'_*(u)$
if it is of the smallest degree in $s$.

On the other hand, there exists $Q\in V_{\iota'(Y)}^{-1}(\Dsc_{X\times\C})_{(p,0)}$ 
such that
\[
(b'(x_1\partial_1 + \cdots+ x_d\partial_d + t\partial_t,x'') + Q)
\iota'_*(u) = 0. 
\]
Writing $Q$ in the form
\[
Q = \sum_{i,j\geq 0} Q_{ij}(x,\partial)\partial_t^it^j, 
\]
we have
\begin{align*}
  0 &= (b'(x_1\partial_1 + \cdots+ x_d\partial_d + t\partial_t,x'') + Q)
  \iota'_*(u) 
\\&
= b'(x_1\partial_1 + \cdots+ x_d\partial_d + \partial_tt-1,x'')
(u\otimes\delta(t))
 + \sum_{i,j\geq 0} Q_{ij}(x,\partial)u\otimes \partial_t^it^j\delta(t) 
\\&
= b'(x_1\partial_1 + \cdots+ x_d\partial_d -1,x'')u\otimes\delta(t)
+ \sum_{i\geq 0} Q_{i0}(x,\partial)u\otimes \delta^{(i)}(t) .
\end{align*}
This implies, in particular, 
\[
(b'(x_1\partial_1 + \cdots+ x_d\partial_d -1,x'') + 
Q_{00})u = 0
\]
with $Q_{00} \in V_Y^{-1}(\Dsc_X)_p$.
Thus $b'(s-1,x'')$ is an indicial polynomial of $u$ if it is of the smallest
degree in $s$. 

In conclusion, we have proved the equality
\begin{multline}
\{b(s-1,x'') \in (\Osc_Y)_p[s] \mid
(b(x_1\partial_1 + \cdots + x_d\partial_d,x'')u 
\in V_Y^{-1}(\Dsc_X)_p)u \}
\\=
\{b'(s,x'') \in (\Osc_Y)_p[s] \mid
(b'(x_1\partial_1 + \cdots + x_d\partial_d + t\partial_t,x'')+Q)
\iota'_*(u)
\in V_{\iota'(Y)}^{-1}(\Dsc_{X\times\C})_{(p,0)}\iota'_*(u) \}.
\nonumber
\end{multline}
The assertion 2 follows from this equality.
\end{proof}

If $\Msc = \Osc_X$ and $u=1$, then the $b$-function in the above sense 
conicides with $b(-s)$, where $b(s)$ is 
the Bernstein-Sato polynomial of the variety which corresponds to $\Fsc$  
defined by Budur-Musta\c t\~a-Saito \cite{BMS}. 

We shall show in the next section that
if $\Fsc$ is a defining ideal of a variety $Y$ and if $Y$ is non-singular
at $p$, then the $b$-function and an indicial polynomial at $p$
with respect to $\Fsc$ in Theorem \ref{th:BMS} correspond to those of
Definitions \ref{def:b-function} and \ref{def:indicial} along $Y$.

\section{Indicial polynomials and $b$-functions along submanifolds
via graph embedding}
\label{section:embedding}

Let $Y$ be a closed analytic subset of a complex manifold $X$. 
We denote $\Isc_Y$ the defining ideal of $Y$,
which is a coherent sheaf of ideals of $\Osc_X$. 
Let $u$ be a section of a coherent $\Dsc_X$-module $\Msc$
defined on a neighborhood of a non-singular point $p$ of $Y$.
Then at $p$, we can relate the $b$-function and an indicial polynomial of $u$
along $Y$ defined in Section 1
to those of $u$ with respect to $\Isc_Y$ defined in Section 2. 
This is crucial in actual computation. 

\begin{theorem}\label{th:embedding} 
In the notation above, let $d$ be the codimension of $Y$ around $p$.  
\begin{enumerate}
\item
The $b$-function $b(s)$ of $u$ along $Y$ at $p$ 
coincides with $b'(s+d)$ where $b'(s)$ is the $b$-function of $u$
with respect to $\Isc_Y$ at $p$. 
\item
  Let $b(s,x)$ be a monic polynomial in $(\Osc_X)_p[s]$ such that
  its restriction to $(\Osc_Y)_p[s]= (\Osc_X/\Isc_Y)[s]$,
  which we denote by $b(s,x)|_Y$,
  is an indicial polynomial of $u$ along $Y$.
  Then $b(s-d,x)$ is an indicial polynomial of $u$
  with respect to $\Isc_Y$ at $p$. 
  Conversely, if $b'(s,x)$ is an indicial polynomial of $u$
  with respect to $\Isc_Y$ at $p$, 
    then $b'(s+d,x)|_Y$ is an indicial polynomial of $u$
    along $Y$ at $p$.
\end{enumerate}
\end{theorem}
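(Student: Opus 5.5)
Since the statement is local and, by Theorem \ref{th:BMS}, independent of the choice of generators of $\Isc_Y$, I will pick convenient generators. Near the non-singular point $p$ choose local coordinates $x=(x_1,\dots,x_n)$ with $p=0$ and $Y=\{x_1=\cdots=x_d=0\}$. Then $\Isc_Y$ is generated by $f_i=x_i$ for $i=1,\dots,d$, and the graph embedding is $\iota(x)=(x,x_1,\dots,x_d)$, with $\iota_*(u)=u\otimes\delta(t_1-x_1)\cdots\delta(t_d-x_d)$. The plan is to compare the $V$-filtration along $Y$ in $X$ with the $V$-filtration along $U\times\{0\}$ in $U\times\C^d$ by a change of coordinates that straightens $Z$.

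\textbf{Step 1: straighten $Z$.} Use the coordinates $(y,t)$ on $U\times\C^d$ with $y_i=x_i-t_i$ for $i\le d$, $y_i=x_i$ for $i>d$, and $t$ unchanged. In these coordinates $Z=\{y_1=\cdots=y_d=0\}$ and $U\times\{0\}=\{t=0\}$. The section $\iota_*(u)$ becomes $u(y+t',y'')\otimes\delta(y_1)\cdots\delta(y_d)$, where $t'=(t_1,\dots,t_d)$ is added to $(y_1,\dots,y_d)$. The vector fields transform as $\partial_{t_i}^{\rm old}=\partial_{t_i}-\partial_{y_i}$ and $\partial_{x_i}=\partial_{y_i}$.

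Better still, $\iota$ is the composite of the trivial embedding $j:X\to X\times\C^d$, $x\mapsto(x,0)$, with the automorphism $\phi(x,t)=(x,t+x')$ of $X\times\C^d$, where $x'=(x_1,\dots,x_d)$. Apply the Euler operator $\sum_i t_i\partial_{t_i}$ of $\{t=0\}$ after pulling back by $\phi$. In the source coordinates $(x,s)$, with $t=s+x'$, the variety $\{t=0\}$ becomes the submanifold $W=\{s=-x'\}$. Its image under the shear $(x,s)\mapsto(x,s+x')$, which preserves $Y\times\{0\}$, is again mapped onto $\{t=0\}$.

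\textbf{Step 2: direct computation, mirroring Lemma \ref{lemma:BMS}.} Rather than chase automorphisms, I will redo the proof of Lemma \ref{lemma:BMS} with $d$ new variables at once. Set $\tilde\theta=\sum_{i\le d}t_i\partial_{t_i}$, where $\partial_{t_i}$ is taken in the original coordinates $(x,t)$. Use the relations $(t_i-x_i)\iota_*(u)=0$ and $(\partial_{x_i}+\partial_{t_i})(v\otimes\delta)=(\partial_{x_i}v)\otimes\delta$. These give
\[
t_i\partial_{t_i}\,\iota_*(u)=-\partial_{t_i}t_i\,\iota_*(u)+\iota_*(u)\cdot 0
\]
up to the commutator: $t_i\partial_{t_i}=\partial_{t_i}t_i-1$ and $\partial_{t_i}t_i\,\iota_*(u)=\partial_{t_i}x_i\,\iota_*(u)=x_i\partial_{t_i}\iota_*(u)$.

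On the other hand $\partial_{t_i}\iota_*(u)=-\partial_{x_i}\iota_*(u)+(\partial_{x_i}u)\otimes\delta$, so
\[
\partial_{t_i}t_i\,\iota_*(u)=-(x_i\partial_{x_i}u)\otimes\delta .
\]
Consequently $b(\tilde\theta)\iota_*(u)=b(-\theta-d)u\otimes\delta$, with the sign convention of BMS. This identity accounts for the shift by $d$ and for the sign. Next I transfer filtrations. An operator $P\in V^{-1}_Y(\Dsc_X)$, meaning degree of $\partial_{x'}$ minus degree of $x'$ at most $-1$, acting on $u$ corresponds to an operator in $V^{-1}_{U\times\{0\}}$ acting on $\iota_*(u)$. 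The correspondence replaces each $x_i$ ($i\le d$) by $t_i$ and each $\partial_{x_i}$ by $-\partial_{t_i}$, modulo operators of $x$-type, which have $V$-order $0$ along $t=0$. This gives one inclusion of the ideals $\Jsc$.

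\textbf{Step 3: the converse inclusion, the main obstacle.} Suppose $(b(\tilde\theta,x)+Q)\iota_*(u)=0$ with $Q\in V^{-1}$ along $t=0$. I must extract a relation $(b(\pm\theta-d,x)+P)u=0$ with $P\in V_Y^{-1}$. The plan is to use the $\Dsc_X$-linear isomorphism
\[
\iota_*\Msc\simeq\bigoplus_{\alpha}\Msc\otimes\partial_t^\alpha\delta(t-x').
\]
In this decomposition the $t_i$ act by $x_i$ combined with the shifts $\partial_t^\alpha$. I write $Q$ in normal form $\sum Q_{\alpha\beta}(x,\partial_x)\partial_t^\alpha t^\beta$, rewrite everything in terms of $x$, and take the $\alpha=0$ component, as in Lemma \ref{lemma:BMS}. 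The delicate point is checking that $V$-order $\le-1$ in $t$ yields $V_Y$-order $\le-1$ in $x'$ for the $\alpha=0$ component. Here $t^\beta$ becomes $x'^\beta$, and each $\partial_t$ contributes $-\partial_{x'}$ plus a term landing in a higher $\alpha$-component. I would handle this by induction on the total $\partial_t$-degree.

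\textbf{Step 4: conclusion.} With both inclusions, $\Jsc$ along $Y$ and the $x$-coefficient version along $U\times\{0\}$ agree after the shift $s\mapsto s\pm d$ and restriction to $Y$. The $x$-dependence restricts to $Y$ by Theorem \ref{th:BMS}(3), since the ideal is only determined modulo $\Isc_Y[s]$. Minimal degrees then correspond, which gives assertion 2. Assertion 1 is the case where the polynomials are constant in $x$, together with the Corollary.
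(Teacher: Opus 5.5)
Your proposal has a genuine gap; both directions break down as written.

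\textbf{Step 2.} The identity $b(\tilde\theta)\iota_*(u)=b(-\theta-d)u\otimes\delta$ is false.

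You correctly obtain $\partial_{t_i}t_i\,\iota_*(u)=x_i\partial_{t_i}\iota_*(u)=-x_i\partial_{x_i}\iota_*(u)+(x_i\partial_{x_i}u)\otimes\delta$. However, $-x_i\partial_{x_i}\iota_*(u)$ is not $-(x_i\partial_{x_i}u)\otimes\delta$, because $\partial_{x_i}$ also differentiates $\delta(t-x')$. The two terms actually combine to $x_i u\otimes\partial_{t_i}\delta(t-x')$, which has no $\delta$-component at all.

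The correct relation is $t_i(\partial_{t_i}+\partial_{x_i})\iota_*(u)=(x_i\partial_{x_i}u)\otimes\delta$. Since $t_i\partial_{x_i}\in V^{-1}$ along $t=0$, the Euler field $\sum t_i\partial_{t_i}$ on $\iota_*(u)$ corresponds to $\theta$ on $u$ with no sign change and no shift. The shift by $d$ in the statement comes only from the normalization $b(s-d)$ in Theorem~\ref{th:BMS}.

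Here is a check with $u=1\in\Osc_{\C}$ and $d=1$. Your formula predicts $(t\partial_t+1)\delta(t-x)=0$, but in fact this equals $x\delta'(t-x)\neq 0$. On the other hand, $(t\partial_t+t\partial_x)\delta(t-x)=0$.

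Your identity would lead to $b(s)=\pm b'(-s)$, which contradicts the assertion $b(s)=b'(s+d)$. The $\pm$ hedging in Steps 3--4 cannot repair this. For the same reason, the lift of $\partial_{x_i}$ in your forward direction must be $\partial_{t_i}+\partial_{x_i}$, not $-\partial_{t_i}$.

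\textbf{Step 3.} Taking the $\alpha=0$ component in $\bigoplus_\alpha\Msc\otimes\partial_t^\alpha\delta(t-x')$ with $Q=\sum Q_{\alpha\beta}(x,\partial_x)\partial_t^\alpha t^\beta$ cannot work. Since $x$, $\partial_x$, $\partial_t$ and $t_i\partial_{t_i}$ never lower $\alpha$, that component of the relation is just $b(-d,x)u+\sum_\beta Q_{0\beta}(x,\partial_x)x'^\beta u=0$. Two things go wrong here:
\begin{itemize}
\item $\theta$ has disappeared entirely;
\item $Q_{0\beta}$ may contain arbitrary powers of $\partial_{x'}$, so there is no $V_Y$-order bound.
\end{itemize}
No induction on the $\partial_t$-degree changes this. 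The argument of Lemma~\ref{lemma:BMS} works only because there $t$ and $\partial_t$ act on the $\delta$-factor alone. That fails for $Z=\{t=x'\}$ in the coordinates $(x,t)$.

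The missing idea is the paper's. First rewrite $Q$ and $b(\tilde\theta,x)$ in terms of $t$ and $\partial_t+\partial_{x'}$, which act on $\iota_*(u)$ as $x'$ and $\partial_{x'}$ act on $u$; the error is in $V^{-1}$. Then remove the remaining $\partial_{x'}$ from the coefficients. This uses that $t_i-x_i$ annihilates $\iota_*(u)$ and commutes with $t$, with $\partial_{t_j}+\partial_{x_j}$, and with the $b$-term. Hence commutators with $t_i-x_i$ still annihilate $\iota_*(u)$, and they lower the $\partial_{x_i}$-degree of the coefficients.

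Alternatively, your abandoned Step 1 would have worked. In the coordinates $y'=x'-t$:
\begin{itemize}
\item $\partial_t+\partial_{x'}$ becomes the new $\partial_t$, and $\iota_*(u)=u(t,y'')\otimes\delta(y')$;
\item the $V$-filtration along $t=0$ is a product filtration in which $y'$ and $\partial_{y'}$ have order $0$;
\item the old Euler field differs from the new one by $-\sum t_i\partial_{y_i}\in V^{-1}$.
\end{itemize}
The argument of Lemma~\ref{lemma:BMS} (take the $\delta(y')$-component) then applies verbatim.
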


\begin{proof}
We may assume that $X$ is an open set of $\C^n$ with $p= 0 \in X$ 
and that there exists a set of local generators $f_1,\dots,f_d$ of
$\Isc_Y$ on an open subset $U$ of $X$ such that
$df_1\wedge\cdots\wedge df_d\neq 0$ at $p$. 
By a change of coordinates, we may assume that
$f_i = x_i$ for $i=1,\dots,d$.
We use the notation $x= (x',x'')$ with $x'=(x_1,\dots,x_d)$, 
$x''=(x_{d+1},\dots,x_n)$,   
and $\partial' = (\partial_1,\dots,\partial_d)$, 
$\partial'' = (\partial_{d+i},\dots, \partial_n)$ 
with $\partial_i = \partial/\partial x_i$. 
Then the embedding $\iota$ of the preceding section becomes 
\[
\iota\,:\, U \ni x= (x',x'') \longmapsto (x',x'',x') \in U \times \C^d.
\]

Let $b(s,x)$ be a monic polynomial in $(\Osc_X)_p[s]$
such that $b(s,x)|_Y = b(s,0,x'')$ is an indicial polynomial of $u$
along $Y$ at $p$.  
Then there exists a differential operator $Q = Q(x',x'',\partial',\partial'')
\in V_Y^{-1}(\Dsc_X)_0$ 
such that
\[
(b(x_1\partial_1 +\cdots+ x_d\partial_d,0,x'') + Q)u = 0.
\]
Expanding $Q$ in the form
\[
Q = \sum_{\alpha,\beta\in\N^d,|\beta|-|\alpha|\leq -1}
Q_{\alpha,\beta}(x'',\partial'')
x'^\alpha \partial'^\beta
\]
with $\N = \{0,1,2,\dots\}$, set
\[
\tilde Q :=
\sum_{\alpha,\beta\in\N^d,|\beta|-|\alpha|\leq -1}
Q_{\alpha,\beta}(x'',\partial'')
t^\alpha (\partial_t+\partial')^\beta
\]
with $t = (t_1,\dots,t_d)$ and
$\partial_t = (\partial_{t_1},\dots,\partial_{t_d})$. 
Then we have
\[
\tilde Q \iota_*(u) = \tilde Q(u\otimes\delta(t_1-x_1)\cdots\delta(t_d-x_d))
= Qu\otimes\delta(t_1-x_1)\cdots\delta(t_d-x_d)
\]
since
\begin{align*}
&
(t_i-x_i)(u\otimes\delta(t_1-x_1)\cdots\delta(t_d-x_d)) = 0, 
\\&
(\partial_{t_i} + \partial_i)(u\otimes\delta(t_1-x_1)\cdots\delta(t_d-x_d)) 
= \partial_iu\otimes\delta(t_1-x_1)\cdots\delta(t_d-x_d) 
\end{align*}
hold for $i=1,\dots,d$.
Hence we obtain
\begin{align*}
&
(b(t_1(\partial_{t_1}+\partial_1) +\cdots+ t_d(\partial_{t_d}+\partial_d),0,x'')
+ \tilde Q)\iota_*(u)
\\&
= (b(x_1\partial_1 + \cdots + x_d\partial_d,0,x'') + Q)u)
\otimes \delta(t_1-x_1)\cdots\delta(t_d-x_d)
= 0
\end{align*}
Here 
$\tilde Q$ 
and 
\[
R := b(t_1(\partial_{t_1}+\partial_1) +\cdots+ t_d(\partial_{t_d}+\partial_d),0,x'')
- b(t_1\partial_{t_1} +\cdots+ t_d\partial_{t_d},0,x'')
\] 
both belong to $V_{X\times\{0\}}^{-1}(\Dsc_{X\times\C^d})$. 
Thus it follows that 
$b(s-d,x)$ is an indicial polynomial of $u$ with respect
to $\Isc_Y$ at $p$ if it is of the smallest degree in $s$
in view of the assertion 3 of Theorem 1. 


Conversely,
assume that $b(s-d,x)$ is an indicial polynomial of $u$ with respect to
$\Isc_Y$ at $p$.
Then $b(s,x)$ is an indicial polynomial of  
$\iota_*(u)$ along $X\times\{0\}$ at $(0,0)$ by the definition. 
Hence there eixsts a differential operator
\[ 
Q = Q(t_1,\dots,t_d,x',x'',\partial_{t_1},\dots,\partial_{t_d},
\partial',\partial'') 
\in V_{X\times\{0\}}^{-1}(\Dsc_{X\times\C^d}) 
\]
such that
\[
 (b(t_1\partial_{t_1}+\cdots+t_d\partial_{t_d},x)+ Q)
(u\otimes\delta(t_1-x_1)\cdots\delta(t_d-x_d)) = 0. 
\]
By using the expansion
\[
Q = \sum_{\alpha,\beta\in\N^d,|\beta|-|\alpha|\leq -1}
Q_{\alpha,\beta}(x,\partial) t^\alpha\partial_t^\beta , 
\]
rewrite $Q$ in the form
\begin{align*}
Q &= 
  \sum_{\alpha,\beta\in\N^d,|\beta|-|\alpha|\leq -1}
  Q_{\alpha,\beta}(x,\partial) t^\alpha
  (\partial_t + \partial' - \partial')^\beta 
\\ &= 
  \sum_{\alpha,\beta\in\N^d,|\beta|-|\alpha|\leq -1}
Q'_{\alpha,\beta}(x,\partial) t^\alpha(\partial_t + \partial')^\beta . 
\end{align*}
Likewise writing $b(s,x)$ in the form
\[
b(s,x) = a_m(x)s^m + a_{m-1}(x)s^{m-1} + \cdots + a_0(x),\quad a_m(x) = 1,
\]
rewrite
\begin{align*}
b(t_1\partial_{t_1}+\cdots+ t_d\partial_{t_d},x)
&= \sum_{i=0}^m a_i(x)(t_1(\partial_{t_1}+\partial_1-\partial_1)+\cdots+
  t_d(\partial_{t_d}+\partial_d-\partial_d))^i 
\\&
= \sum_{i+j\leq m} a_{ij}(x)
(t_1\partial_{x_1}+\cdots + t_d\partial_{x_d})^j
(t_1(\partial_{t_1}+\partial_1)+\cdots + t_d(\partial_{t_d}+\partial_d))^i 
\\&
= b(t_1(\partial_{t_1}+\partial_1)+\cdots + t_d(\partial_{t_d}+\partial_d),x)
+  B
\end{align*}
with
\begin{align*}
&
  b(t_1(\partial_{t_1}+\partial_1)+\cdots + t_d(\partial_{t_d}+\partial_d),x)
  := \sum_{i=0}^m a_i(x)(t_1(\partial_{t_1}+\partial_1)+\cdots+
  t_d(\partial_{t_d}+\partial_d))^i,
\\&
B:= \sum_{i+j\leq m, j\geq 1} a_{ij}(x)
(t_1\partial_{x_1}+\cdots + t_d\partial_{x_d})^j
(t_1(\partial_{t_1}+\partial_1)+\cdots + t_d(\partial_{t_d}+\partial_d))^i 
\end{align*}
noting the commutation relation
\[
  [t_i(\partial_{t_i} + \partial_{x_i}),\,t_i\partial_{x_i}]
  = [t_i\partial_{t_i},\,t_i\partial_{x_i}]
  = t_i\partial_{x_i}.
 \]

 Now $P := B + Q$ belongs to $V_{X\times\{0\}}^{-1}(\Dsc_{X\times\C^d})$
 and has the form
\begin{equation}\label{eq:P}
 P =
   \sum_{\alpha,\beta\in\N^d,|\beta|-|\alpha|\leq -1}
P_{\alpha,\beta}(x,\partial) t^\alpha (\partial_t + \partial')^\beta
\end{equation}
and 
\begin{equation}\label{eq:bt}
(b(t_1(\partial_{t_1}+\partial_1)+\cdots + t_d(\partial_{t_d}+\partial_d),x)
+ P)\iota_*(u) = 0
\end{equation}
holds.
Let us denote by $\ord'$ the order of a differential  operator with respect to
$x'$, i.e., the total degree in $\partial'$.

Let us show that there exists $P \in V_{X\times\{0\}}^{-1}(\Dsc_{X\times\C^d})$
in the form (\ref{eq:P}) with $\ord' P_{\alpha,\beta} = 0$ such that 
the equation (\ref{eq:bt}) holds. 
To prove this claim, first note that $t_i-x_i$ commute with
$\partial_{t_i} + \partial_{x_i}$. Hence we have
\[
[t_i - x_i,\,P]\iota_*(u)
  = [t_i - x_i,\,b(t_1(\partial_{t_1}+\partial_1)+\cdots + t_d(\partial_{t_d}+\partial_d),x) + P]\iota_*(u) 
= 0.  
\]
Writing $P$ in the form (\ref{eq:P}) we get
\[
[t_i-x_i,\,P] = 
   \sum_{\alpha,\beta\in\N^d,|\beta|-|\alpha|\leq -1}
       [t_i-x_i,\,P_{\alpha,\beta}(x,\partial)]
       t^\alpha(\partial_t + \partial')^\beta  , 
\]
which belongs to $V_{X\times\{0\}}^{-1}(\Dsc_{X\times\C^d})$
and has a same form as (\ref{eq:P}).
Suppose that the largest degree of all the $P_{\alpha,\beta}(x,\partial)$
in $\partial_i$ is $k \geq 1$ for some $i \leq d$. 
Since $[t_i-x_i,\partial_{x_i}^k] =k\partial_{x_i}^{k-1}$,   
\[
P' := P - \frac1k [t_i-x_i,P]
\]
also belongs to $V_{X\times\{0\}}^{-1}(\Dsc_{X\times\C^d})$,
has a same form as in (\ref{eq:P}), and (\ref{eq:bt}) holds
with $P$ replaced by $P'$.
Moreover we have $\ord' P' < \ord' P$.
Thus we have proved the claim by induction.

Now we have the equation (\ref{eq:bt}) with some 
$P \in V_{X\times\{0\}}^{-1}(\Dsc_{X\times\C^d})$ of the form
\[
P =  \sum_{\alpha,\beta\in\N^d,|\beta|-|\alpha|\leq -1}
P_{\alpha,\beta}(x,\partial'') t^\alpha(\partial_t + \partial')^\beta
  .
\]
Then we have
\begin{multline*}
((b(x_1\partial_1+\cdots+x_d\partial_d,x) + \tilde P)u)
  \otimes\delta(t_1-x_1)\cdots\delta(t_d-x_d)
\\
= 
(b(t_1(\partial_{t_1} + \partial_1)+\cdots+t_d(\partial_{t_d}+\partial_d),x)
  +P)\iota_*(u)
=0
\end{multline*}
with
\begin{align*}
&
  b(x_1\partial_1 +\cdots + x_d\partial_d,x)
  := \sum_{i=0}^m a_i(x)(x_1\partial_1 + \cdots + x_d\partial_d)^i,
\\&
\tilde P :=  \sum_{\alpha,\beta\in\N^d,|\beta|-|\alpha|\leq -1}
P_{\alpha,\beta}(x,\partial'') x'^\alpha \partial'^\beta.  
\end{align*}
It is easy to see that $\tilde P$ belongs to $V_Y^{-1}(\Dsc_X)$. 
Thus $b(s,0,x'')$ is an indicial polynomial of $u$ along $Y$ at $p$
if it is of the smallest degree.
This completes the proof.
\end{proof}

\section{Inverse image of a $D$-module}
\label{section:inv}

In this section, we assume that $X$ is a $(n+d)$-dimensional
complex manifold and $Y$ is a closed submanifold of $X$ of codimension $d$.
Around a given point $p$ of $Y$, we choose a local coordinate system
$(x,t) = (x_1,\dots,x_n,t_1,\dots,t_d)$ such that $Y$ is defined by
$t_1 = \dots = t_d = 0$. 

Set
$\Dsc_{Y\rightarrow X} := \Osc_Y\otimes_{\Osc_X}\Dsc_X$,
which is a $(\Dsc_Y,\Dsc_X)$-bimodule on $Y$.
In the local coordinate system $(x,t)$, we have
\[
\Dsc_{Y\rightarrow X} \simeq \Dsc_X/(t_1\Dsc_X + \cdots + t_d\Dsc_X).
\]
The inverse image of a coherent $\Dsc_X$-module $\Msc$ with respect
to the embedding $\iota : Y\rightarrow X$ 
is defined by
\[
\mathbb{L}\iota^*(\Msc) :=
\Dsc_{Y\rightarrow X}\stackrel{\mathbb{L}}{\otimes}_{\Dsc_X}\Msc
\]
in the derived category $D^{-}(\mathrm{Mod}(\Dsc_Y))$ 
of complexes of left $\Dsc_Y$-modules bounded above
(see Kashiwara \cite{KashiwaraBook}).
It is also called the ($D$-module theoretic) restriction of $\Msc$ to $Y$. 

It follows from a theorem of Laurent-Schapira \cite{LS} that
if there exists an indicial polynomial along $Y$ for
each section of $\Msc$, then the cohomology groups of $\mathbb{L}\iota^*(\Msc)$
are coherent $\Dsc_Y$-modules.
In the sequel let us give a concrete expression of the cohomology groups.  

For an integer vector $\mvec = (m_1,\dots,m_k) \in \Z^k$, 
we define the shifted $V$-filtration
\[
 V_Y^i[\mvec]((\Dsc_X)^k) := V_Y^{i-m_1}(\Dsc_X) \oplus \cdots \oplus
V_Y^{i-m_k}(\Dsc_X)
\]
on the free module $(\Dsc_X)^k$.
The associaed graded module is defined by
\[
\gr_Y[\mvec]((\Dsc_X)^k) := \bigoplus_{i\in\Z}\gr_Y^i[\mvec]((\Dsc_X)^k)
\]
with
\[
\gr_Y^i[\mvec]((\Dsc_X)^k) := V_Y^i[\mvec]((\Dsc_X)^k)/V_Y^{i-1}[\mvec]((\Dsc_X)^k).
\]
We omit $[\mvec]$ if $\mvec$ is the zero vector. 
There exists an exact sequence 
\begin{equation} \label{eq:resolution}
  \cdots \longrightarrow (\Dsc_X^{r_{d+1}})_p
  \stackrel{\psi_{d+1}}{\longrightarrow} 
 (\Dsc_X^{r_{d}})_p 
\stackrel{\psi_{d}}{\longrightarrow} 
\cdots 
\stackrel{\psi_2}{\longrightarrow} (\Dsc_X^{r_1})_p
\stackrel{\psi_1}{\longrightarrow} (\Dsc_X^{r_0})_p
\stackrel{\varphi}{\longrightarrow}\Msc_p \longrightarrow 0
\end{equation}
of  left $\Dsc_X$-modules together with vectors 
$\mvec_1 \in \Z^{r_1}$, $...$, $\mvec_l \in \Z^{r_l}$ such that 
\[
\psi_{j+1}(V_Y^k[\mvec_{j+1}](\Dsc_X^{r_{j+1}})_p)
\subset V_Y^k[\mvec_j](\Dsc_X^{r_j})_p
\] 
holds for $j\geq 0$ with $\mvec_0$ the zero vector and that 
(\ref{eq:resolution}) induces an exact sequence
\begin{equation}\label{eq:Vresolution}
\cdots\rightarrow
V_Y^k[\mvec_{d+1}](\Dsc_X^{r_{d+1}})_p
\stackrel{\psi_{d+1}}{\longrightarrow} 
\cdots
\stackrel{\psi_2}{\longrightarrow}
V_Y^k[\mvec_1](\Dsc_X^{r_1})_p
\stackrel{\psi_1}{\longrightarrow}
V_Y^k(\Dsc_X^{r_0})_p
\stackrel{\varphi}{\longrightarrow}
V_Y^k(\Msc_p) \rightarrow 0
\end{equation}
with $V_Y^k(\Msc_p) := \varphi(V_Y^k(\Dsc_X^{r_0})_p)$ 
for any integer $k$.
See \cite{GO} for a constructive proof. 

The stalk of the inverse image $\mathbb{L}\iota^*(\Msc)_p$
is isomorphic to the complex
\[
  \cdots \longrightarrow (\DscYX^{r_{d+1}})_p
  \stackrel{\psi_{d+1}}{\longrightarrow} 
 (\DscYX^{r_{d}})_p 
\stackrel{\psi_{d}}{\longrightarrow} 
\cdots 
\stackrel{\psi_2}{\longrightarrow} (\DscYX^{r_1})_p
\stackrel{\psi_1}{\longrightarrow} (\DscYX^{r_0})_p
\longrightarrow 0
\]
in the derived category $D^-(\mathrm{Mod}((\Dsc_Y)_p))$
but $(\DscYX^{r_{j}})_p$ are not finitely generated as
left $(\Dsc_Y)_p$-modules.

For a vector $\mvec\in \Z^r$, let
$\{V_Y^k[\mvec](\DscYX^r)\}_{k\in\Z}$ be the filtration induced by
$\{V_Y^k[\mvec](\Dsc_X^r)\}_{k\in\Z}$.  In the local coordinates $(t,x)$,
we have a local isomorphism
\[
\DscYX \simeq \bigoplus_{\alpha\in\N^d}\Dsc_Y\partial_t^\alpha
\]
of left $\Dsc_Y$-module. Then in terms of this isomorphism we have
\[
V_Y^k[\mvec](\DscYX^r) \simeq
\bigoplus_{i=1}^r \bigoplus_{\alpha\in\N^d,|\alpha|\leq k-m_i}
  \Dsc_Y\partial_t^\alpha. 
  \]

In general, let $\Lsc$ be a $\Z[t]$-module and $\Lsc_j$ ($j\in\Z$)
be additive subgroups of $\Lsc$ such that
\[
 t_i\Lsc_j \subset \Lsc_{j-1} \quad (i=1,\dots,d,\, j\in \Z).
 \]
For an integer  $k$, we define the (shifted) Koszul complex
associated with $\{\Lsc_j\}$ by 
\[
K^\bullet[k](\{\Lsc_j\},t)\,:\,
0 \rightarrow 
\Lsc_{k+d} \otimes_\Z \stackrel{0}{\wedge}\Z^d 
\stackrel{\delta}{\longrightarrow}
\Lsc_{k+d-1}\otimes_\Z \stackrel{1}{\wedge}\Z^d 
\stackrel{\delta}{\longrightarrow}
\cdots
\stackrel{\delta}{\longrightarrow}
\Lsc_k \otimes_\Z \stackrel{d}{\wedge}\Z^d 
\rightarrow 0,
\]
where $\delta$ is defined by 
\[
\delta(u\otimes e_{i_1}\wedge\cdots\wedge e_{i_j}) 
= \sum_{l=1}^d t_lu\otimes e_l\wedge e_{i_1}\wedge\cdots\wedge e_{i_j}
\]
for a subset $\{i_1,\dots,i_j\}$ of $\{1,\dots,d\}$ 
with the unit vectors $e_1,\dots,e_d$ of $\Z^d$.   
Here $\Lsc_{k+d-j}\otimes\stackrel{j}{\wedge}\Z^d$ 
lies at the $(j-d)$th place of the complex.
If $\Lsc_j = \Lsc$ for any $j$, we denote this complex simply
$K^\bullet(\Lsc,t)$. 

The Koszul complex $K^\bullet(\Dsc_X,t)$ is isomorphic to $\DscYX$. 
Consequently we have an isomorphism
\[
\mathbb{L}\iota^*(\Msc) \simeq K^\bullet(\Dsc_X,t)\otimes_{\Dsc_X}\Msc
\simeq K^\bullet(\Msc,t). 
\]
In particular, the $j$th cohomology group $H^j(\mathbb{L}\iota^*(\Msc))$ vanishes
if $j < -d$. 
The free resolution (\ref{eq:resolution}) induces a double complex
\[
\cdots\longrightarrow K^\bullet((\Dsc_X^{r_{d+1}})_p,t)
\longrightarrow K^\bullet((\Dsc_X^{r_d})_p,t)
\longrightarrow \cdots \longrightarrow
K^\bullet((\Dsc_X^{r_0})_p,t)\rightarrow 0, 
\]
the total complex of which is isomorphic to $K^\bullet(\Msc_p,t)$
in $D^-(\mathrm{Mod}((\Dsc_Y)_p))$.  
Likewise, for any integer $k$, the total complex of the double complex
\[
\cdots\rightarrow K^\bullet[k](\{V_Y^j[\mvec_{d+1}](\Dsc_X^{r_{d+1}})_p)\},t)
\rightarrow \cdots \rightarrow
K^\bullet[k](\{V_Y^j[\mvec_0](\Dsc_X^{r_{0}})_p\},t)\rightarrow 0
\]
is isomorphic to $K^\bullet(V_Y^k(\Msc)_p,t)$; 
the total complex of the double complex
\[
\cdots\rightarrow K^\bullet[k](\{\gr_Y^j[\mvec_{d+1}](\Dsc_X^{r_{d+1}})_p\},t)
\rightarrow \cdots \rightarrow
K^\bullet[k](\{\gr_Y^j[\mvec_0](\Dsc_X^{r_{0}})_p\},t)\rightarrow 0 
\]
is isomorphic to $K^\bullet(\gr_Y^k(\Msc_p),t)$
in $D^-(\mathrm{Mod}((\Dsc_Y)_p))$.  

Before stating the main result, let us prepare

\begin{lemma}\label{lemma:bsx}
  Let $u$ be an nonzero element of $\Msc_p$ and assume there exists
  a monic polynomial $b(s,x)\in (\Osc_Y)_p[s]$
  such that $b(\theta,x)u \in V_Y^{-1}(\Dsc_X)_p u$. 
  Then for any integer $k$ and any element $v$ of $V_Y^k(\Dsc_X)_p u$,
  there exists an integer $j$ such that
  $b(\theta + k,x)^jv \in V_Y^{k-1}(\Dsc_X)_pu$. 
\end{lemma}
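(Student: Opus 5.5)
We work in local coordinates $(x,t)$ with $Y=\{t=0\}$ and $\theta=t_1\partial_{t_1}+\cdots+t_d\partial_{t_d}$. The plan is to reduce to monomials and use the commutation of $\theta$ with homogeneous elements of the $V$-filtration. Every element of $V_Y^k(\Dsc_X)_p$ can be written, modulo $V_Y^{k-1}(\Dsc_X)_p$, as a finite sum
\[
\sum_{|\beta|-|\alpha|=k} a_{\alpha\beta}(x,\partial_x)\,t^\alpha\partial_t^\beta .
\]
Here the coefficients $a_{\alpha\beta}$ involve only $x$ and $\partial_x$. This holds because the $t$-dependence of the coefficients can be Taylor expanded, and the terms with higher powers of $t$ lie in $V^{k-1}$. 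We may restrict to $v=Pu$ with $P$ of this homogeneous form, since the remainder already lies in $V_Y^{k-1}(\Dsc_X)_pu$. By linearity, and because $b(\theta+k,x)^j$ with larger $j$ still works (as shown below), it suffices to treat a single term $P=a(x,\partial_x)t^\alpha\partial_t^\beta$ with $|\beta|-|\alpha|=k$. We then take the maximum of the exponents $j$ over the finitely many terms.

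\textbf{Key identity.} Since $[\theta,t^\alpha\partial_t^\beta]=(|\alpha|-|\beta|)t^\alpha\partial_t^\beta$, we get $\theta\,t^\alpha\partial_t^\beta=t^\alpha\partial_t^\beta(\theta-k)$, that is, $(\theta+k)t^\alpha\partial_t^\beta=t^\alpha\partial_t^\beta\theta$. The operator $a(x,\partial_x)$ commutes with $\theta$. However, $b(\cdot,x)$ has coefficients depending on $x$, and these do not commute with $a(x,\partial_x)$. This is the main obstacle. I would handle it as follows. Write $b(\theta+k,x)\,a=a\,b(\theta+k,x)+[b(\theta+k,x),a]$. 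The commutator has the form $\sum_i c_i(x,\partial_x)(\theta+k)^i$, where the total $\partial_x$-order of $c_i$ is strictly less than $\mathrm{ord}\,a$. So I would set up an induction on $\mathrm{ord}_{\partial_x} a$.

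\textbf{Induction on the order.} The claim to prove, for every operator $a(x,\partial_x)$ of order $\le r$ and every $m\ge0$, is
\[
b(\theta+k,x)^{r+1}\,a(x,\partial_x)\,t^\alpha\partial_t^\beta\,\theta^m u\in V_Y^{k-1}(\Dsc_X)_pu .
\]
The powers $\theta^m$ are included because commutators produce them. First observe the base mechanism:
\[
a\,b(\theta+k,x)\,t^\alpha\partial_t^\beta\theta^m u=a\,t^\alpha\partial_t^\beta\,\theta^m\,b(\theta,x)u+(\text{terms from }[b(\theta,x),\theta^m]=0),
\]
using that $b(\cdot,x)$ commutes with $t,\partial_t$ (only $x$ enters). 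Now $b(\theta,x)u=Qu$ with $Q\in V_Y^{-1}$, and $a\,t^\alpha\partial_t^\beta\theta^m Q\in V^{k-1}$. For the inductive step, write
\[
b(\theta+k,x)^{r+1}a=b(\theta+k,x)^{r}\bigl(a\,b(\theta+k,x)+\textstyle\sum_i c_i(\theta+k)^i\bigr),
\]
with $\mathrm{ord}\,c_i\le r-1$. The term $a\,b(\theta+k,x)\,t^\alpha\partial_t^\beta\theta^m u$ lies in $V^{k-1}u$ as shown. Applying $b(\theta+k,x)^r$ keeps it in $V^{k-1}u$, since $b(\theta+k,x)$ lies in $V^0$ and preserves $V^{k-1}(\Dsc_X)_pu$. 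For the commutator terms, $(\theta+k)^it^\alpha\partial_t^\beta\theta^m=t^\alpha\partial_t^\beta\theta^{m+i}$, so the induction hypothesis for $c_i$ with exponent $r$ applies. This is exactly why $\theta^m$ is carried along in the statement. The base case $r=0$, where $a$ is a function of $x$, follows since then $[b(\theta+k,x),a]=0$.

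Thus $j=1+\max\mathrm{ord}_{\partial_x}a_{\alpha\beta}$ works for $v$. In the analytic setting the sum over $(\alpha,\beta)$ is finite, because $P$ has finite order and only finitely many powers of $t$ are needed modulo $V^{k-1}$. This completes the plan.
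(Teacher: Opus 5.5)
Your proof is correct and rests on the same mechanism as the paper's: the shift $(\theta+k)P\equiv P\theta$ for $P$ of $V$-order $k$, the hypothesis $b(\theta,x)u\in V_Y^{-1}(\Dsc_X)_pu$, and an induction on the $\partial_x$-order driven by commutators of $P$ with the $x$-dependent coefficients of $b$, which gives the same bound $j=\ord_x P+1$. The difference is only organizational: the paper skips your reduction to $V$-homogeneous monomials by using that congruence modulo $V_Y^{k-1}(\Dsc_X)_p$ for an arbitrary $P\in V_Y^k(\Dsc_X)_p$, and it replaces $P$ by $P'=[a_1,P]\theta^{m-1}+\cdots+[a_m,P]\in V_Y^k(\Dsc_X)_p$ (where $b(s,x)=s^m+a_1(x)s^{m-1}+\cdots+a_m(x)$), which absorbs the powers of $\theta$ that you carry along as $\theta^m$.
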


\begin{proof}
  There exists $P \in V_Y^k(\Dsc_X)_p$ such that $v = Pu$.
  It is easy to see that
  $(\theta+k)P - P\theta \in V_Y^{k-1}(\Dsc_X)_p$.
  Write
  \[
    b(s,x) = s^m + a_1(x)s^{m-1} + \cdots + a_m(x). 
  \]
  Then we have
\begin{align*}
b(\theta + k,x)P - Pb(\theta,x)
&= (\theta+k)^m P - P\theta^m
+ a_1((\theta+k)^{m-1}P - P\theta^{m-1})
\\&\quad
+ (a_1P - Pa_1)\theta^{m-1}
+ \cdots + (a_mP - Pa_m)
\\&\equiv [a_1,P]\theta^{m-1} + \cdots + [a_m,P]
\quad \mod V_Y^{k-1}(\Dsc_X)_p.
\end{align*}
It follows that
\[
b(\theta+k,x)Pu \equiv ([a_1,P]\theta^{m-1} + \cdots + [a_m,P])u
= P'u
\quad \mod V_Y^{k-1}(\Dsc_X)_pu
\]
with $P' := [a_1,P]\theta^{m-1} + \cdots + [a_m,P]$.
Denote $\ord_x\, P$
the order of $P$ with respect to the derivations
$\partial = (\partial_1,\dots,\partial_n)$. 
Then $\ord_x\,P' < \ord_x\,P$ holds. 
Thus if $j \geq \ord_xP + 1$, then
\[
b(\theta+k,x)^jPu \in V_Y^{k-1}(\Dsc_X)_pu
\]
holds. This completes the proof.
\end{proof}

In general, for a complex
\[
X^\bullet : \quad \cdots {\longrightarrow} X^{k-1}
\stackrel{d_X^{k-1}}{\longrightarrow}
X^{k} \stackrel{d_X^k}{\longrightarrow}
X^{k+1} \stackrel{d_X^{k+1}}{\longrightarrow}
X^{k+2} \longrightarrow\cdots
\]
in an abelian category $\Csc$ and an integer $k$, define the truncated
complex by
\[
\tau^{\geq k}X^\bullet : \quad \cdots \rightarrow 0 \rightarrow 
  \mathrm{Coker}\, d_X^{k-1} \longrightarrow X^{k+1} 
  \stackrel{d_X^{k+1}}{\longrightarrow}
X^{k+2} \longrightarrow\cdots
\]
following the notation in \cite{KS}.
Then there is a chain map $X^\bullet \rightarrow \tau^{\geq k}X^\bullet$,
which induces an isomorphism
$H^j(X^\bullet) \stackrel{\sim}{\rightarrow} H^j(\tau^{\geq k}X^\bullet)$
for each $j \geq k$.
In particular, if $H^j(X^\bullet) = 0$ for all $j < k$,
the chain map above induces an isomorphism
$X^\bullet \stackrel{\sim}{\rightarrow} \tau^{\geq k}X^\bullet$
in the  derived category $D(\Csc)$. 

\begin{theorem}\label{th:inv}
  In the notation above, set $u_i = \varphi(e_i)$, where $e_1,\dots,e_{r_0}$
  are the unit vectors. Assume that there exists an indicial polynomial
  $b_i(s,x) \in (\Osc_Y)_p[s]$ of $u_i$ along $Y$ at $p$ for each $i$.
  Let $k_0 \leq k_1$ be integers (or $k_0 = -\infty$) such that
  $b_i(j,p)$ ($i=1,\dots,r_0$) do not vanish if an integer $j$
  satsifies $j \leq k_0$ or $j > k_1$.
  Let $V^\bullet$ be the complex
\begin{equation} \label{eq:truncation}
V^\bullet: \quad
  \cdots\rightarrow
  \frac{V_Y^{k_1}[\mvec_{d+1}](\Dsc_{Y\rightarrow X}^{r_{d+1}})_p}
     {V_Y^{k_0}[\mvec_{d+1}](\Dsc_{Y\rightarrow X}^{r_{d+1}})_p}
     \stackrel{\overline\psi_{d+1}}{\longrightarrow}
\cdots
\stackrel{\overline\psi_2}{\rightarrow}
\frac{V_Y^{k_1}[\mvec_1](\Dsc_{Y\rightarrow X}^{r_1})_p}
{V_Y^{k_0}[\mvec_1](\Dsc_{Y\rightarrow X}^{r_1})_p}
\stackrel{\overline\psi_1}{\rightarrow}
\frac{V_Y^{k_1}(\Dsc_{Y\rightarrow X}^{r})_p}
{V_Y^{k_0}(\Dsc_{Y\rightarrow X}^{r})_p}
\rightarrow 0
\end{equation}
  of free $(\Dsc_Y)_p$-modules of finite rank;
  here $\overline\psi_j$ is the homomorphism naturally induced by $\psi_j$
  and the denominators are $0$ if $k_0 = -\infty$. 
  Then $\mathbb{L}\iota^*(\Msc)_p$ is isomorphic to the bounded complex
  $\tau^{\geq -d}V^\bullet$   in the derived category
  $D^{-}(\mathrm{Mod}((\Dsc_Y)_p))$. 
  In particular, if $b_i(j,p)$ does not vanish for any $j \in \Z$ and
  any $i=1,\dots,r_0$, then $\mathbb{L}\iota^*(\Msc) = 0$ holds in
  $D^{-}(\mathrm{Mod}((\Dsc_Y)_p))$.
\end{theorem}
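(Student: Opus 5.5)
The plan is to compare the Koszul complex $K^\bullet(\Msc_p,t)$, which represents $\mathbb{L}\iota^*(\Msc)_p$, with its filtered pieces, and to show that the graded pieces $K^\bullet(\gr_Y^k(\Msc_p),t)$ are acyclic for $k\le k_0$ and for $k>k_1$. We then identify the quotient $K^\bullet(V^{k_1}_Y(\Msc_p),t)/K^\bullet(V^{k_0}_Y(\Msc_p),t)$, read through the double complexes built from the $V$-adapted resolution (\ref{eq:Vresolution}), with the complex $V^\bullet$ of (\ref{eq:truncation}).

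\textbf{Step 1 (action of $\theta$ on graded pieces).} By Lemma \ref{lemma:bsx}, for every $v\in V_Y^k(\Msc_p)=\sum_i V_Y^k(\Dsc_X)_pu_i$, some power of $\prod_i b_i(\theta+k,x)$ sends $v$ into $V_Y^{k-1}(\Msc_p)$. Hence $\gr_Y^k(\Msc_p)$ is locally annihilated by powers of $c_k(\theta,x):=\prod_i b_i(\theta+k,x)$, which is monic in $\theta$.

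\textbf{Step 2 (acyclicity of the graded Koszul complex).} On $K^\bullet[k](\{\gr^j_Y\},t)$ the element $t_l$ lowers the degree $j$ by one. Over $\gr_Y(\Dsc_X)$, the Euler operator $\theta$ gives a contracting homotopy for the Koszul differential, namely contraction with $\partial_{t_l}$. Indeed, $\delta h+h\delta$ acts on the term $\gr^{j}\otimes\wedge^q$ as $\theta$ plus a constant depending on $j$ and $q$. I expect these constants to work out so that $\delta h+h\delta$ equals $\theta+k$ up to the shifts forced by $t_l\Lsc_j\subset\Lsc_{j-1}$.

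Now $\gr^k_Y$ is a module over $\Osc_Y[\theta]$ that is locally killed by a power of $c_k(\theta,x)$. If $c_k(\theta,p)$ is invertible at $\theta=0$, i.e.\ $b_i(k,p)\ne 0$ for all $i$, then $c_k(0,x)$ is a unit in $(\Osc_Y)_p$. Writing $c_k(\theta,x)=c_k(0,x)+\theta g$ shows that $\theta$ acts invertibly on $\gr^k_Y(\Msc_p)$. Consequently each cohomology of $K^\bullet(\gr^k_Y(\Msc_p),t)$ is both killed by and invertible under the relevant shifted $\theta$, so it vanishes. The index bookkeeping (degree $k$ versus $k+d$ in the shifted Koszul complex, and the sign convention $\theta+k$) has to match the hypothesis ``$j\le k_0$ or $j>k_1$'', and I would fix these constants by checking the case $d=1$ first. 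This is the main obstacle: getting the shifts exactly right, and verifying that the homotopy commutes appropriately with the $(\Osc_Y)_p$-coefficients, which do not commute with $\partial$ but do commute with $\theta$ modulo $V^{-1}$.

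\textbf{Step 3 (passing from graded pieces to the filtration).} From Step 2, $K^\bullet(V^{k}/V^{k-1})$ is acyclic for $k\le k_0$ and for $k>k_1$. For $k\le k_0$, induction gives that $K^\bullet(V^{k_0}_Y/V^{k}_Y)$ is acyclic for every $k<k_0$. Passing to $V^{k_0}_Y$ itself needs an Artin--Rees or completeness argument: an element of the Koszul complex of $V^{k_0}$ lies in some finite stage, and cycles can be pushed into arbitrarily deep $V^k$. More simply, $\bigcap_k V^k=0$ for the $\DscYX$-side description, since $V^k_Y(\DscYX)$ is a finite sum over $|\alpha|\le k$ and is zero for $k<0$. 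So I would work directly with the complexes of $\DscYX$ via the double-complex isomorphisms stated before the theorem, where $V^{k}=0$ for $k\ll0$ on each term. On the other side, $\Msc_p=\bigcup_k V^k_Y$, and homology commutes with this filtered union, so $K^\bullet(\Msc_p/V^{k_1})$ is acyclic. Together these show that $\mathbb{L}\iota^*(\Msc)_p$ is quasi-isomorphic to the total complex of the quotient double complex $V^{k_1}/V^{k_0}$.

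\textbf{Step 4 (identification with $V^\bullet$).} Using $K^\bullet(\Dsc_X,t)\simeq\DscYX$ term by term, the Koszul complexes of the free modules $V^k[\mvec_j](\Dsc_X^{r_j})$ are resolutions of $V^k[\mvec_j](\DscYX^{r_j})$, concentrated in the top degree. The total complex therefore collapses to $V^\bullet$, whose terms are free $(\Dsc_Y)_p$-modules of finite rank because of the explicit description $\bigoplus_{|\alpha|\le k-m_i}\Dsc_Y\partial_t^\alpha$. Finally, $H^j(\mathbb{L}\iota^*(\Msc))=0$ for $j<-d$, so the truncation $\tau^{\ge -d}$ is a quasi-isomorphism, which yields a bounded complex. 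If no $b_i(j,p)$ vanishes for any $j\in\Z$, we may take $k_0=k_1$, so $V^\bullet=0$ and hence $\mathbb{L}\iota^*(\Msc)_p=0$.
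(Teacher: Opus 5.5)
Your proposal is correct, but it proves the central step differently from the paper. That step is the acyclicity of $K^\bullet[k](\{\gr_Y^j(\Msc_p)\},t)$ when $\prod_i b_i(k,p)\neq 0$.

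\textbf{The paper's route.} It treats $d=1$ by hand. It shows $t_1:\gr_Y^{k+1}\to\gr_Y^k$ is injective and surjective, using Lemma \ref{lemma:bsx} together with the identities $b(\partial_{t_1}t_1+k,x)^j-b(k,x)^j\in V_Y^1t_1$ and $b(\theta+k,x)^j-b(k,x)^j\in t_1V_Y^1$. It then reaches $d>1$ by a mapping-cone induction, with the details deferred to Oaku--Takayama.

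\textbf{Your route.} You use the Koszul contraction $h=\sum_l\partial_{t_l}\otimes\iota_{e_l}$ for all $d$ at once. The bookkeeping you flagged as the main obstacle works out cleanly.
\begin{itemize}
\item On the term $\gr_Y^{k+d-q}(\Msc_p)\otimes\wedge^q\Z^d$ one finds $\delta h+h\delta=\theta+d-q$, that is, $\theta+j-k$ on $\gr_Y^j$.
\item $\gr_Y^j$ is killed elementwise by powers of $\prod_i b_i(\theta+j,x)=\prod_i b_i((\theta+j-k)+k,x)$.
\item Hence this operator is bijective on every term exactly when $\prod_i b_i(k,p)\neq 0$. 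This is the same condition for every $q$, and it matches the hypothesis on $k_0,k_1$.
\item $\Phi=\delta h+h\delta$ is a chain map that is bijective termwise, so $\Phi^{-1}h$ is a contracting homotopy.
\item Your worry about the coefficients is unnecessary. In these coordinates, functions of $x$ commute exactly with $t_l$, $\partial_{t_l}$ and $\theta$.
\end{itemize}
The paper's $d=1$ argument is in fact this homotopy in disguise, with $h=\partial_{t_1}$.

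\textbf{Comparison.} Your approach gives a uniform proof for all $d$ without induction, and it shows why a single condition $b(k,p)\neq0$ controls every term of the shifted Koszul complex. The paper's approach gives a more elementary, explicit computation in codimension one.

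\textbf{Steps 3--4.} These also spell out what the paper compresses into ``this implies.''
\begin{itemize}
\item At the upper end, you use exactness of direct limits.
\item At the lower end, you work on the $\DscYX$ side, where $V_Y^k[\mvec_j](\DscYX^{r_j})_p=0$ for $k\ll 0$ in each fixed cohomological degree. This termwise vanishing, rather than mere separatedness of the filtration, is what the argument actually uses. It also sidesteps any question about separatedness of $V_Y^\bullet(\Msc_p)$ itself.
\end{itemize}
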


\begin{proof}
  Set $b(s,x) := b_1(s,x)\cdots b_{r_0}(s,x)$ and
 $\gr_Y^k(\Msc_p) := V_Y^k(\Msc_p)/V_Y^{k-1}(\Msc_p)$. 
It suffices to show that $K^\bullet[k](\{\gr_Y^j(\Msc_p)\},t)$ is exact
if $b(k,p) \neq 0$. 
  In view of (\ref{eq:Vresolution}), we have
  \[
   V_Y^k(\Msc_p) = V_Y^k(\Dsc_X)_pu_1 + \cdots + V_Y^k(\Dsc_X)_pu_{r_0}. 
 \]
 If $d=1$, then $K^\bullet[k](\{\gr_Y^j(\Msc_p)\},t)$ is the complex 
  \[
  0 \rightarrow \gr_Y^{k+1}(\Msc_p) \stackrel{t_1}{\longrightarrow}
  \gr_Y^{k}(\Msc_p) \rightarrow 0.
  \]
  Assume $v\in V_Y^{k+1}(\Msc_p)$ satisfies $t_1v \in V_Y^{k-1}(\Msc_p)$.
  By Lemma \ref{lemma:bsx}, $b(\theta+k+1,x)^jv$
  belongs to $V_Y^{k}(\Msc_p)$ for some integer $j$.
  Hence we have
  \[
b(\partial_{t_1}t_1+k,x)^jv = b(\theta+k+1,x)^jv \in V_Y^k(\Msc_p). 
  \]
  On the other hand, one has
  \[
b(\partial_{t_1}t_1+k,x)^j-b(k,x)^j = Qt_1
  \]
with some $Q \in V_Y^1(\Dsc_X)_p$. 
It follows that
$b(k,x)^jv$ belongs to $V_Y^k(\Msc_p)$. This implies
$v \in V_Y^k(\Msc_p)$ since $b(k,p) \neq 0$.

Next let $u$ be an arbitrary element of $V_Y^k(\Msc_p)$. 
Then $b(\theta+k,x)^ju$ belongs to $V_Y^{k-1}(\Msc_p)$ for some $j$.
There exists $Q \in V_Y^1(\Dsc_X)_p$ such that
\[
b(\theta+k,x)^j - b(k,x)^j = -t_1Q.
\]
Hence we have
\[
b(k,x)^ju \equiv t_1Qu \quad \mod V_Y^{k-1}(\Msc_p).
\]
Thus $t_1 : \gr_Y^{k+1}(\Msc_p) \rightarrow \gr_Y^k(\Msc_p)$
is an isomorphism.
We can show the claim for $d>1$ by induction using the fact that
$K^\bullet[k](\{\gr_Y^j(\Msc_p)\},t)$ is isomorphic to the mapping cone of
\[
K^\bullet[k+1](\{\gr_Y^j(\Msc_p)\},t_1,\dots,t_{d-1})
\stackrel{t_d}{\longrightarrow}
K^\bullet[k](\{\gr_Y^j(\Msc_p)\},t_1,\dots,t_{d-1}).
\]
See \cite{OTalgDmod} for details. 
This implies  $\mathbb{L}\iota^*(\Msc)_p$ is isomorphic to $V^\bullet$,
which is isomorphic to $\tau^{\geq -d}V^\bullet$ 
in the derived category since $H^j(\mathbb{L}\iota^*(\Msc)) = 0$ for $j < -d$. 
\end{proof}

\begin{example}\label{ex:Fuchs}\rm
  Let us consider a single equation $Pu=0$, i.e, a left $\Dsc_X$-module
  $\Msc = \Dsc_Xu = \Dsc_X/\Dsc_X P$, where
  \[
  P = t^k\partial_t^m + a_1(x)t^{k-1}\partial_t^{m-1}
  + \cdots + a_k(x)\partial_t^{m-k} + Q
  \]
  with $d=1$, $0 \leq k \leq m$, $a_i \in (\Osc_Y)_p$,
  and $Q \in V_Y^{m-k-1}(\Dsc_X)_p$. 
Then the indicial polynomial of $P$ along $Y$ at $p$ is
\begin{align*}
b(s,x) &= s(s-1)\cdots(s-m+1) + a_1(x) s(s-1)\cdots (s-m+2)
\\&\quad
+ \cdots + a_k(x)s(s-1)\cdots(s-m+k+1). 
\end{align*}
Note that $P$ is Fuchsian in the sense of \cite{BG} if
$\ord\,Q \leq m$. 
Assume $b(j,p) \neq 0$ for any integer $j \geq m-k$.
Then we can take $k_0 = -\infty$ and $k_1 = m-k-1$
in the notation of Theorem \ref{th:inv}. 
Hence $\mathbb{L}\iota^*(\Msc)$ is isomorphic to the complex
\[
0 \rightarrow F_Y^{-1}(\DscYX)_p \stackrel{P}{\longrightarrow}
F_Y^{m-k-1}(\DscYX)_p \rightarrow 0. 
\]
Since $F_Y^{-1}(\DscYX)_p = 0$, this implies an isomorphism
\[
\mathbb{L}\iota^*(\Msc)_p \simeq F_Y^{m-k-1}(\DscYX)_p
= \bigoplus_{i=0}^{m-k-1}(\Dsc_Y)_p\partial_t^i
\]
in $D^-(\mathrm{Mod}((\Dsc_Y)_p))$.
In particular, we have
\[
H^0(\mathbb{L}\iota^*(\Msc))_p \simeq (\Dsc_Y^{m-k})_p,
\qquad
H^{-1}(\mathbb{L}\iota^*(\Msc))_p =0
\]
as left $(\Dsc_Y)_p$-modules. 
\end{example}

\section{Algebraic formulation}\label{section:algebraic}

Let us consider the algebraic version of  $b$-functions
and indicial polynomials. 
We have only to treat $b$-functions and indicial polynomials along
a linear subspace by using the graph embedding.

In what follows we fix a field $K$ of characteristic zero,    
which we regard as the `base field' over which all the data are defined, 
and an algebraic closed field $\Omega$ which contains all fields 
we work with.  We also introduce an intermediate field $L$
such that $K \subset L \subset \Omega$ in order to clarify the
dependence of indicial polynomials and $b$-functions on the coefficient field. 
Most typically, we set $K = \Q$,  $\Omega = \C$
and let $L$ be an algebraic extension field of $\Q$. 

Let $X := K^{n+d}$ be the $(n+d)$-dimensional affine space over $K$
and let $(x,t) = (x_1,\dots,x_n,t_1,\dots,t_d)$ be its coordinates. 
Let $Y := X \times \{0\} \subset X \times K^d$ be the $n$-dimensional
affine subspace of $X$ with the coordinates $x$. 

Let us denote by $D_n$ the ring of differential operators with 
polynomial coefficients in $n$ variables $x$ over $K$,
and by $D_{n+d}$ that in $(n+d)$-variables $(x,t)$. 
The $V$-filtration on $D_{n+d}$ with respect to $Y$ is defined by
\[
V^i_Y(D_{n+d}) := \{ P \in D_{n+d} \mid P \langle t_1,\dots,t_d\rangle^j
\subset  \langle t_1,\dots,t_d)\rangle^{j-i} 
\quad (\forall j \in \Z) \} \quad (i\in\Z), 
\]
where $\langle t_1,\dots,t_d\rangle$ is the ideal of $K[t,x]$ generated by 
$t_1,\dots,t_d$. 
The associated graded ring is defined to be
\[
\gr_Y(D_{n+d}) := \bigoplus_{i\in\Z}\gr_Y^i(D_{n+d}), 
\qquad
\gr_Y^i(D_{n+d}) := V^i_Y(D_{n+d})/V_Y^{i-1}(D_{n+d}). 
\]

Let $M$ be a finitely generated left $D_{n+d}$-module 
and $u$ a nonzero element of $M$. 
The left ideal 
\[
\Ann_{D_{n+d}}u := \{P \in D_{n+d} \mid Pu = 0\}
\]
of $D_{n+d}$ is called the annihilator (ideal) of $u$. 
Then 
\[
\gr^i_Y(\Ann_{D_{n+d}}u) := (V^i_Y(D_{n+d})\cap \Ann_{D_{n+d}}u)/
(V^{i-1}_Y(D_{n+d})\cap \Ann_{D_{n+d}}u)
\qquad (i\in\Z)
\]
is a left $\gr_Y^0(D_{n+d})$-submodule of $\gr^i_Y(D_{n+d})$. 
The residue class of  $P \in V_Y^i(D_{n+d})$ belongs to
$\gr_Y^i(\Ann_{D_{n+d}}u)$
if and only if there exists $Q \in V_Y^{i-1}(D_{n+d})$ such that
  $(P+Q)u = 0$. 
Set 
$
\theta := t_1\partial_{t_1} + \cdots + t_d\partial_{t_d}
$ and let $s$ be the residue class of $\theta$ in $\gr_Y^0(D_{n+d})$. 
Then $\gr^0_Y(\Ann_{D_{n+d}}u) \cap K[s,x]$ is an ideal of the 
polynomial ring $K[s,x]$.  
We define an ideal $J_Y(u)$ of $K[s,x]$ by
\[
J_Y(u) := \gr^0_Y(\Ann_{D_{n+d}}u) \cap K[s,x]. 
\]
We remark that
\[
L\otimes_K J_Y(u)
= L\otimes_K \gr^0_Y(\Ann_{D_{n+d}}u) \cap L[s,x]
= \gr^0_Y(L\otimes_K\Ann_{D_{n+d}}u) \cap L[s,x]
\]
holds for any extension field $L$ of $K$. 

\begin{definition}\rm
  Let $p = (c_1,\dots,c_n)$  be a point of $\Omega^n$ and
  $L$ a subfield of $\Omega$. 
  Then we set
\[
Z_L(p) := \{f(x) \in L[x] \mid f(p) = 0\},
\]
which is a prime ideal of $L[x]$.
In particular, $Z_\Omega(p)$ is the maximal ideal of $\Omega[x]$
generated by $x_1-c_1,\dots,x_n-c_n$, and $Z_L(p) = Z_\Omega(p) \cap L[x]$. 
\end{definition}

For example, set $\Omega = \C$ and $p = (\alpha,\alpha\sqrt2) \in \C^2$ 
with a transcendental number $\alpha \in \C$. Then we have
\[
Z_{\Q}(p) = \langle 2x_1^2 - x_2^2\rangle, 
\quad
Z_{\Q(\sqrt2)}(p) = \langle \sqrt2 x_1 - x_2\rangle, 
\quad
Z_{\C}(p) = \langle x_1-\alpha,x_2-\sqrt2\alpha\rangle. 
\]


It is natural to specify a point $p$ of $\Omega^n$ in terms of 
the ideal $Z_L(p)$ rather than the coordinates of $p$. 
In fact, for any algebraic extension $L$ of $\Q$ and 
any prime ideal $\pp$ of $L[x]$, there exists 
a point $p$ of $\C^n$ such that $\pp = Z_L(p)$; 
see Proposition 1.4 of \cite{Mumford}.
That is, the map $\C^n \ni p \mapsto Z_L(p) \in \mathrm{Spec}\,L[x]$
is surjective. 
We shall see that an indicial polynomial at $p$ is
determined by the ideal $Z_L(p)$. 
However, we need a condition on $L$ for the $b$-function at $p$ to be
determined by $Z_L(p)$, in general.

\begin{definition}\label{def:algbfunction}\rm
Let $M$, $u$ be as above 
and $L$ be an extension field of $K$. 
Let $\pp$ be a prime ideal of $L[x]$. 
Set
$J_Y(u)_\pp := L[x]_\pp\otimes_{L[x]}L\otimes_K J_Y(u)$,
which is an ideal of $L[x]_\pp[s]$; 
here $L[x]_\pp$ denotes the localization of $L[x]$ with respect
to the multiplicative set $L[x]\setminus\pp$. 
Then the monic generator of the ideal $J_Y(u)_\pp \cap L[s]$ of $L[s]$,
if it is not $\{0\}$, 
is called the {\em $b$-function of $u$ along $Y$ at the prime ideal $\pp$},
which we denote by $b_\pp(s)$.
The monic generator of the ideal $J_Y(u) \cap K[s]$, if it is not $\{0\}$,  
is called the {\em global $b$-function} of $u$ along $Y$. 
\end{definition}

The condition $b_\pp(s) \in J_Y(u)_\pp$ is equivalent to
the existence of $a(x) \in L[x]\setminus \pp$ such that
$a(x)b_\pp(s) \in L\otimes_K J_Y(u)$. 
The correspondence $\pp \mapsto b_\pp(s)$ 
defines a map $\Spec L[x] \rightarrow L[s]$.
If $\pp_1$ and $\pp_2$ are prime ideals of $L[x]$ with $\pp_1 \subset \pp_2$, 
then $b_{\pp_1}(s)$ divides $b_{\pp_2}(s)$
since $L[x]_{\pp_2} \subset L[x]_{\pp_1}$.
The following fact is well-known for the classical Bernstein-Sato
polynomial. 

\begin{proposition} 
Let $L$ be an arbitrary extension field of $K$. 
The global $b$-function $b(s) \in K[x]$ of $u$ along $Y$ 
is the least common multiple of the $b$-functions of $u$
at the maximal ideals of $L[x]$. 
In particular, the global $b$-function exists if and only if
the $b$-function at every maximal ideal of $L[x]$ exists. 
\end{proposition}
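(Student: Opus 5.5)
We work with the ideal $J := L\otimes_K J_Y(u) \subset L[s,x]$. By the remark preceding Definition \ref{def:algbfunction}, $J$ is the ideal defining everything over $L$. The key identity is $J\cap L[s] = L\otimes_K (J_Y(u)\cap K[s])$. It holds because $L[s,x]$ is flat over $K[s,x]$ and intersection with the subring $L[s]$ commutes with the base change $L\otimes_K$: pick a $K$-basis of $L$, expand, and use that $J_Y(u)$ and $K[s]$ are $K$-subspaces. Hence the global $b$-function $b(s)$, if it exists, is also the monic generator of $J\cap L[s]$.

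The first step is the easy divisibility. Suppose $b(s)$ exists. Then $b(s)\in J\subset J_Y(u)_\mathfrak{m}$ for every maximal ideal $\mathfrak{m}$ of $L[x]$, so $b_\mathfrak{m}(s)$ exists and divides $b(s)$. Consequently the least common multiple of all the $b_\mathfrak{m}(s)$ divides $b(s)$. In particular, only finitely many distinct $b_\mathfrak{m}$ occur, since they are all monic divisors of one polynomial.

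The main step is the converse. Assume every $b_\mathfrak{m}(s)$ exists, and let $\mathfrak{m}$ range over the maximal ideals of $L[x]$. For each $\mathfrak{m}$, choose $a_\mathfrak{m}(x)\in L[x]\setminus\mathfrak{m}$ with $a_\mathfrak{m}(x)b_\mathfrak{m}(s)\in J$. The ideal of $L[x]$ generated by all the $a_\mathfrak{m}$ lies in no maximal ideal, so it is the unit ideal. By Noetherianity, finitely many of them, say $a_1,\dots,a_N$ with polynomials $b_1,\dots,b_N$, already satisfy $\sum_i c_i(x)a_i(x)=1$ for suitable $c_i(x)$. Set $c(s):=\mathrm{lcm}(b_1,\dots,b_N)$. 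Then $a_i(x)c(s)\in J$ for each $i$, so
\[
c(s)=\sum_i c_i(x)a_i(x)c(s)\in J\cap L[s].
\]
Thus $J\cap L[s]\neq 0$, and by the identity in the first paragraph the global $b$-function $b(s)$ exists. Moreover, $b(s)$ divides $c(s)$, which in turn divides the lcm of all the $b_\mathfrak{m}$. Together with the first step, this gives equality. The same argument shows existence is equivalent: if some $b_\mathfrak{m}$ fails to exist, then $b(s)$ cannot exist by the first step.

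I expect the main obstacle to be justifying $J\cap L[s]=L\otimes_K(J_Y(u)\cap K[s])$ cleanly, including the case where $L/K$ is infinite. The point that makes it work is that membership of $f\in L[s,x]$ in $L\otimes_K J_Y(u)$ can be checked coefficientwise in a $K$-basis of $L$. If $f\in L[s]$, each of those coefficients lies in $K[s]$. A secondary point is that the Nullstellensatz-type step, that the $a_\mathfrak{m}$ generate the unit ideal, needs only that every proper ideal lies in some maximal ideal. So no assumption that $L$ is algebraically closed is required.
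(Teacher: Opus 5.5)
Your proof is correct and uses the same mechanism as the paper's. The elements $a_\mathfrak{m}\in L[x]\setminus\mathfrak{m}$ with $a_\mathfrak{m}b_\mathfrak{m}(s)\in L\otimes_K J_Y(u)$ generate an ideal lying in no maximal ideal, hence the unit ideal; the paper phrases this by contradiction via $I=(L\otimes_K J_Y(u):\tilde b(s))\cap L[x]$ and justifies the base-change identity by elimination rather than with a $K$-basis of $L$. Your direct version does not presuppose the global $b$-function, so it actually proves that if all the local $b_\mathfrak{m}$ exist then the global $b(s)$ exists and only finitely many distinct $b_\mathfrak{m}$ occur. The paper's argument starts from $\tilde b\mid b$ and so leaves that direction implicit.
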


\begin{proof}
It is easy to see (e.g., by the elimination algorithm) that 
\[
L\otimes_K J_Y(u) \cap L[s] = L\otimes_K(J_Y(u) \cap K[s]). 
\]
Let $b_\pp(s)$ be the $b$-function at a prime ideal $\pp$ of $L[x]$. 
Then $b_\pp(s) \in L[s]$  is a factor of $b(s)$ in $L[s]$
since $b(s) \in J_Y(u) \subset J_Y(u)_\pp$. 

Let $\tilde
b(s)$ be the 
least common multiple of the $b$-functions at the maximal ideals of $L[x]$. 
Then $\tilde b(s)$ is a factor of $b(s)$. 
Suppose $\tilde b(s) \neq b(s)$. Then $\tilde b(s)$ does not belong to 
$L\otimes_K J_Y(u)$. Set
\[
 I := (L\otimes_K J_Y(u) : \tilde b(s)) \cap L[x] 
 = \{a(x) \in L[x] \mid a(x)\tilde b(s) \in L\otimes_K J_Y(u)\},
\]
which is a proper ideal of $L[x]$ by the assumption. 
Let $\pp$ be a maximal ideal of $L[x]$ which contains $I$. 
There exists $a(x) \in L[x] \setminus \pp$ such that
$a(x)b_\pp(s) \in L\otimes_K J_Y(u)$.
Since $b_\pp(s)$ divides $\tilde b(s)$, 
we have $a(x) \in I \setminus \pp$, which is a contradiction. 
\end{proof}

The $b$-function essentially depends on the field extension $L$ of $K$
in general. 

\begin{example}\label{ex:sqrt2}\rm
  Set $n=d=1$ and consider $M := D_2/D_2(t\partial_t-x) + D_2(x^2-2))$
  with the base field $K = \Q$ with $u$ the residue class of $1$ in $M$.
  Then $J_Y(u)$ is the ideal of $\Q[s,x]$ generated by $s-x$ and $x^2-2$. 
  The global $b$-function of $u$ is $s^2-2$.
  If $L = \Q$ and $\pp = \langle x^2-2 \rangle \subset \Q[x]$, then
  the $b$-function of $u$ at $\pp$ is $s^2-2$.
  On the other hand, if $L$ contains $\Q(\sqrt2)$,
  the $b$-function at the prime ideal
  $\qq := \langle x-\sqrt2\rangle\subset L[x]$
  is $s-\sqrt2$ while $\qq \cap \Q[x] = \pp$.
  In fact, $(x+\sqrt2)(s-\sqrt2)$ belongs to $L\otimes_K J_Y(u)$. 
\end{example}

\begin{definition}\label{def:algindcial}\rm
Let $M$, $u$ be as above 
and $L$ be an extension field of $K$. 
Let $\pp$ be a prime ideal of $L[x]$.
Then a monic polynomial $b(s,x) \in L[x]_\pp[s]$ of the smallest degree in $s$
that belongs to 
$J_Y(u)_\pp$ 
is called an {\em (algebraic) indicial polynomial}
of $u$ along $Y$ at $\pp$.  
\end{definition}

\begin{example}\rm
Set $n=2$, $d=1$, and $M = D_3u$ with $(x_1t\dt - x_2)u = 0$. 
Then  we have $J_Y(u) = \langle x_1s - x_2\rangle$.
Let $\pp$ be a prime ideal of $K[x]$.
If $x_1 \not \in \pp$, the indicial polynomial of $u$ along $Y$ at $\pp$
is $s-x_2/x_1 \in K[x]_\pp[s]$.
For example, if $\pp = \langle x_2 \rangle$, then
$x_2/x_1 \in K[x]_\pp = \{ f/g \mid f,g \in K[x], g \not\in x_2K[x]\}$. 
If $x_1 \in\pp$, then there exists no indicial polynomial at $\pp$.
\end{example}

Contrary to the $b$-function, an indicial polynomial is stable under
field extension.

\begin{proposition}
  Let $b(s,x) \in K[x]_\pp[s]$ be an indicial polynomial of $u$ along $Y$ at
  a prime ideal $\pp$ of $K[x]$.  Let $\qq$ be a prime ideal
  of $L[x]$ with an extension field $L$ of $K$ such that
  $\qq \cap K[x] = \pp$. Then $b(s,x)$ can be regarded as an
  element of $L[x]_\qq[s]$ and as such it is an indicial polynomial
  of $u$ along $Y$ at $\qq$.
\end{proposition}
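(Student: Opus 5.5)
The plan has two parts: first show that $b(s,x)$ lies in $J_Y(u)_\qq$, then show that no monic element of $J_Y(u)_\qq$ has smaller degree in $s$.

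\emph{Membership.} Since $\qq\cap K[x]=\pp$, every element of $K[x]\setminus\pp$ lies in $L[x]\setminus\qq$. Hence $K[x]_\pp\subset L[x]_\qq$, and the composite $K[x]_\pp[s]\to L[x]_\qq[s]$ sends $J_Y(u)_\pp$ into $J_Y(u)_\qq = L[x]_\qq\otimes_{L[x]} L\otimes_K J_Y(u)$. Concretely, there is $a(x)\in K[x]\setminus\pp$ with $a(x)b(s,x)\in J_Y(u)$. Since $a\notin\qq$, $b(s,x)$ is a monic element of $J_Y(u)_\qq$, of degree $m$ say.

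\emph{Minimality.} Suppose some monic $c(s,x)\in J_Y(u)_\qq$ has degree $m'<m$ in $s$. Clearing denominators gives $g(x)\in L[x]\setminus\qq$ with
\[
F:=g(x)c(s,x)=g(x)s^{m'}+\cdots\in L\otimes_K J_Y(u),
\]
whose leading coefficient in $s$ is not in $\qq$. Choose a $K$-basis $\{\lambda_\nu\}$ of $L$ with $\lambda_0=1$, and expand $F=\sum_\nu \lambda_\nu F_\nu$ with $F_\nu\in K[s,x]$. Because $L\otimes_K J_Y(u)=\bigoplus_\nu \lambda_\nu J_Y(u)$ inside $L[s,x]=\bigoplus_\nu\lambda_\nu K[s,x]$, each $F_\nu$ lies in $J_Y(u)$. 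The leading coefficient is $g=\sum_\nu\lambda_\nu g_\nu$, where $g_\nu$ is the coefficient of $s^{m'}$ in $F_\nu$ and some $F_\nu$ have degree below $m'$, so $g_\nu=0$ for them. If every $g_\nu$ were in $\pp$, then each $g_\nu\in\qq$ and hence $g\in\qq$, a contradiction. So some $g_\nu\notin\pp$. For that $\nu$, $F_\nu/g_\nu$ is a monic element of $J_Y(u)_\pp$ of degree $m'<m$, contradicting the minimality of $b(s,x)$ at $\pp$.

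The only real subtlety is this descent step. It needs the decomposition of $L\otimes_K J_Y(u)$ along a $K$-basis of $L$, which holds because $L$ is flat (indeed free) over $K$, and it needs the fact that $\qq\cap K[x]=\pp$ converts "all $g_\nu\in\pp$" into $g\in\qq$. Everything else is bookkeeping with localizations.
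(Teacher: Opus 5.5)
Your proof is correct and is essentially the paper's argument. The paper packages the degree comparison through the ideals $I_k(L)$ of coefficients of $s^k$ of elements of $L\otimes_K J_Y(u)$ of $s$-degree at most $k$, notes $I_k(L)=L\otimes_K I_k(K)$, and then uses exactly your descent step (expand over a $K$-basis of $L$ and use $\qq\cap K[x]=\pp$); you instead apply that expansion directly to the single element $F=g\,c$, and the fact you use implicitly, that each $F_\nu$ has $s$-degree at most $m'$, follows from the same uniqueness of the basis expansion.
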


\begin{proof}
Let us define a filtration of $L[s,x]$ by
\[
F_k(L[s,x]) = \{ f(s,x) \in L[s,x] \mid
\mbox{$f(s,x)$ is of degree $\leq k$ in $s$ or $0$} \}.
\]
Set
\begin{align*}
  \gr_s^k(L\otimes_K J_Y(u)) &:= (F_k(L[s,x]) \cap L\otimes_K J_Y(u))
  /(F_{k-1}(L[s,x]) \cap L\otimes_K J_Y(u)),  
\\
I_k(L) &:= \{a(x) \in L[x] \mid a(x)s^k \in \gr_s^k(L\otimes_K J_Y(u))\}.
\end{align*}
  Then $I_k(L)$ is an increasing sequence of ideals of $L[x]$, hence 
  there exists $m \geq 0$ such that
  $I_k = I_m$ for any $k\geq m$. Set $I_\infty(L) := I_m(L)$. 

  The degree in $s$ of an indicial polynomial of $u$ at $\qq$ equals
  $k_0 := \min \{ k \mid I_k(L) \not\subset \qq\}$ if this set is not empty.
  In particular, an indicial polynomial at $\qq$ exists 
  if and only if $I_\infty(L) \not\subset \qq$. 
  Thus it suffices to show that 
  $I_k(L) \not\subset \qq$ holds if and only if
  $I_k(K) \not\subset \pp$.

  Note that $I_k(L) = L\otimes_K I_k(K)$ holds
  since $\gr_s^k(L\otimes_K J_Y(u)) = L\otimes_K\gr_s^k(J_Y(u))$. 
  Assume that there exists $a(x) \in I_k(L) \setminus \qq$.
  There exist $a_0(x) \in I_k(K)$,  and a finite number of $c_i \in L\setminus K$  
  and $a_i(x) \in I_k(K)$ with $i\geq 1$
  such that
\[
a(x) = a_0(x) + \sum_{i\geq 1} c_ia_i(x).
\]
  There is some $i \geq 0$ such that
  $a_i(x)$ does not belong to $\pp$;
  otherwise, $a(x)$ belongs to $\qq$.
  The converse implication is trivial. 
\end{proof}

\begin{example}\rm
  In the setting of Example \ref{ex:sqrt2},
Let $\pp$ be a prime ideal of  $L[x]$ with an arbitrary $L \supset K$.
If $\pp$ contains $x^2-2$, then $s-x$ is an indicial polynomial of
$u$ along $Y$ at $\pp$. Otherwise, the indicial polynomial is $1$.
\end{example}

As to the behavior of the $b$-function under field extension,
we have the following

\begin{proposition}
  Let $K \subset L \subset \tilde L$ be field extensions.
  Assume $\pp$ is a prime ideal of $L[x]$ and $\qq$ is a prime
  ideal of $\tilde L[x]$ such that $\qq \cap L[x] = \pp$.
  Let $b_\pp(s)$ be the $b$-function of $u$ at $\pp$ and
  assume that $b_\pp(s)$ decomposes into linear factors in $L[s]$.
  Then $b_\pp(s)$ is also the $b$-function of $u$ at $\qq$. 
\end{proposition}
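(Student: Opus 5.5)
We compare the two ideals directly, using that $b_\qq(s)$ must divide $b_\pp(s)$ and hence, by the splitting hypothesis, has coefficients in $L$. Throughout, write $J_L := L\otimes_K J_Y(u)$, which is an ideal of $L[s,x]$. Then $\tilde L\otimes_K J_Y(u) = \tilde L\otimes_L J_L =: J_{\tilde L}$. By the remark after Definition \ref{def:algbfunction}, a polynomial $f(s)$ belongs to $J_Y(u)_\pp$ if and only if $a(x)f(s)\in J_L$ for some $a(x)\in L[x]\setminus\pp$. The analogous statement holds for $\qq$, $\tilde L$ and $J_{\tilde L}$.

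\emph{Step 1: $b_\qq(s)$ divides $b_\pp(s)$.} Take $a(x)\in L[x]\setminus\pp$ with $a(x)b_\pp(s)\in J_L\subset J_{\tilde L}$. Since $\qq\cap L[x]=\pp$, we have $a(x)\notin\qq$. Hence $b_\pp(s)\in J_Y(u)_\qq\cap\tilde L[s]$, so the $b$-function $b_\qq(s)$ exists and divides $b_\pp(s)$ in $\tilde L[s]$.

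\emph{Step 2: $b_\qq(s)$ has coefficients in $L$.} By hypothesis $b_\pp(s)=\prod_i (s-\alpha_i)^{m_i}$ with $\alpha_i\in L$. Since $\tilde L[s]$ is a UFD and $b_\qq$ is a monic divisor of $b_\pp$, we get $b_\qq(s)=\prod_i(s-\alpha_i)^{n_i}$ with $0\le n_i\le m_i$. Thus $b_\qq(s)\in L[s]$. This is exactly where the splitting hypothesis is used; without it, a factor defined only over $\tilde L$ could appear, as in Example \ref{ex:sqrt2}.

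\emph{Step 3: $b_\pp(s)$ divides $b_\qq(s)$.} This is the main point. Choose $\tilde a(x)\in\tilde L[x]\setminus\qq$ with $\tilde a(x)b_\qq(s)\in J_{\tilde L}$. Fix a basis $\{c_\lambda\}$ of $\tilde L$ as an $L$-vector space and expand $\tilde a(x)=\sum_\lambda c_\lambda a_\lambda(x)$ with $a_\lambda(x)\in L[x]$, only finitely many nonzero. As $L$-vector spaces, $\tilde L[s,x]=\bigoplus_\lambda c_\lambda L[s,x]$ and $J_{\tilde L}=\tilde L\otimes_L J_L=\bigoplus_\lambda c_\lambda J_L$. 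Since $b_\qq(s)\in L[s]$, each product $a_\lambda(x)b_\qq(s)$ lies in $L[s,x]$. Comparing components in $\tilde a(x)b_\qq(s)=\sum_\lambda c_\lambda\, a_\lambda(x)b_\qq(s)\in\bigoplus_\lambda c_\lambda J_L$ gives $a_\lambda(x)b_\qq(s)\in J_L$ for every $\lambda$. If every $a_\lambda$ lay in $\pp\subset\qq$, then $\tilde a$ would lie in $\qq$, a contradiction. So some $a_\lambda(x)\in L[x]\setminus\pp$, and therefore $b_\qq(s)\in J_Y(u)_\pp\cap L[s]$. Hence $b_\pp(s)$ divides $b_\qq(s)$.

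Both polynomials are monic and divide each other, so $b_\qq(s)=b_\pp(s)$. The only delicate step is the component comparison in Step 3. It requires $b_\qq(s)$ to have coefficients in $L$, so that multiplying by $b_\qq(s)$ respects the decomposition along the basis $\{c_\lambda\}$, and this is supplied by Step 2.
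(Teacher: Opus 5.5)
Your proof is correct and follows essentially the same route as the paper: first $b_\qq \mid b_\pp$, then $b_\qq(s)\in L[s]$ by the splitting hypothesis, and finally a denominator $\tilde a(x)\in\tilde L[x]\setminus\qq$ is descended to some $a_\lambda(x)\in L[x]\setminus\pp$. The paper gets the last step from the base-change identity $(\tilde L\otimes_K J_Y(u):b_\qq)\cap\tilde L[x]=\tilde L\otimes_L\bigl((L\otimes_K J_Y(u):b_\qq)\cap L[x]\bigr)$, whereas you verify the needed instance directly by decomposing along an $L$-basis of $\tilde L$; you also justify Step 1, which the paper only asserts.
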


\begin{proof}
  The $b$-function  $b_\qq(s)$ at $\qq$ divides $b_\pp(s)$ in $\tilde L[s]$.
  Hence $b_\qq(s)$ belongs to $L[s]$. There exists
  $a(x) \in \tilde L[x] \setminus \qq$ such that
  $a(x)b_\qq(s) \in \tilde L \otimes_K J_Y(u)$. 
Since we have
\[
a(x) \in (\tilde L \otimes_K J_Y(u) :b_\qq) \cap \tilde L[x]
= \tilde L\otimes_L ((L\otimes_K J_Y(u):b_\qq) \cap L[x]), 
\]
there exist a finite number of $c_i \in \tilde L$
and $a_i(x) \in (L\otimes_K J_Y(u):b_\qq) \cap L[x]$ with $c_0 = 1$ such that
\[
a(x) = a_0(x) + \sum_{i\geq 1}c_ia_i(x). 
\]
Then some $a_i(x)$ does not belong to $\pp$. 
Thus $(L\otimes_K J_Y(u) :b_\qq) \cap L[x]) \setminus \pp$ is not empty.
Consequently $b_\pp(s)$ divides $b_\qq(s)$.
\end{proof}

The folloing fact follows from the preceding two propositions
since $Z_L(p) = Z_\Omega(p) \cap L[x]$.
Recall that $\Omega$ is assumed to be algebraically closed. 

\begin{proposition}\label{prop:field_ext}
  Let $p$ be a point of $\Omega^n$ and let $L$ be a subfield of $\Omega$.
  \begin{enumerate}
  \item
    If the $b$-function $b(s)$ of $u$ at the prime ideal $Z_L(p)$ is a product
    of linear factors, then $b(s)$ is also the $b$-function of $u$
    at the maximal ideal $Z_\Omega(p)$.
  \item
    If an indicial polynomial $b(s,x)$ of $u$ at the prime ideal $Z_L(p)$
    exists, then $b(s,x)$ is also an indicial polynomial of $u$
    at the maximal ideal $Z_\Omega(p)$.
\end{enumerate}
\end{proposition}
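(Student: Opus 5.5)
The plan is to apply the two preceding propositions with the pair of fields $L \subset \Omega$. Set $\pp := Z_L(p)$, a prime ideal of $L[x]$, and $\qq := Z_\Omega(p)$, the maximal ideal of $\Omega[x]$ generated by $x_1-c_1,\dots,x_n-c_n$. By definition $Z_L(p) = Z_\Omega(p)\cap L[x]$, so $\qq\cap L[x] = \pp$. Hence the hypothesis shared by both propositions (a prime $\qq$ of the larger polynomial ring lying over $\pp$) is satisfied.

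For assertion 1, invoke the proposition on the behavior of the $b$-function under field extension with the tower $K\subset L\subset \tilde L := \Omega$. The hypothesis that $b(s)=b_\pp(s)$ decomposes into linear factors in $L[s]$ is exactly the assumption of item 1; I read ``product of linear factors'' as meaning over $L$. The proposition then gives that $b_\pp(s)$ is the $b$-function of $u$ at $\qq = Z_\Omega(p)$.

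For assertion 2, use the proposition on the stability of indicial polynomials, with a mild adaptation. That proposition was stated for $\pp\subset K[x]$ and an extension $L\supset K$. Here I need it with base $L$ and extension $\Omega$. Its proof only used that $\gr_s^k(\Omega\otimes_K J_Y(u)) = \Omega\otimes_L\gr_s^k(L\otimes_K J_Y(u))$, which holds by flatness of field extensions. It also used the resulting identity $I_k(\Omega)=\Omega\otimes_L I_k(L)$ and the decomposition of an element of $I_k(\Omega)\setminus\qq$ as $\sum c_i a_i(x)$ with $c_i$ in $\Omega$ linearly independent over $L$ and $a_i\in I_k(L)$. So the same argument applies verbatim with $(K,L)$ replaced by $(L,\Omega)$ and $J_Y(u)$ replaced by $L\otimes_K J_Y(u)$. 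It shows that the minimal $s$-degree $k_0$ is the same at $\pp$ and at $\qq$. Moreover, $b(s,x)\in J_Y(u)_\pp$ gives $a(x)b(s,x)\in L\otimes_K J_Y(u)\subset\Omega\otimes_K J_Y(u)$ for some $a\notin\pp$, hence $a\notin\qq$. Since also $L[x]_\pp\subset\Omega[x]_\qq$, $b(s,x)$ lies in $J_Y(u)_\qq$, is monic, and has the minimal degree. It is therefore an indicial polynomial at $Z_\Omega(p)$.

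The only point needing care, which I expect to be the sole obstacle, is this rebasing of the earlier proposition from $K$ to $L$. The key observation is that every object in its proof depends only on the $L$-form $L\otimes_K J_Y(u)$ and not on a $K$-structure. The step where a nonvanishing linear combination modulo $\qq$ forces some $a_i\notin\pp$ uses only that $\qq\cap L[x]=\pp$. Note that $\Omega$ being algebraically closed plays no role in the argument itself; it only makes $Z_\Omega(p)$ a maximal ideal.
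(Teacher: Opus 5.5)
Your proposal is correct and matches the paper's proof, which simply derives this proposition from the two preceding propositions using $Z_L(p) = Z_\Omega(p)\cap L[x]$. You also correctly note that the indicial-polynomial stability result has to be rebased from $K$ to $L$, replacing $J_Y(u)$ by $L\otimes_K J_Y(u)$; the paper leaves this step implicit, and your treatment of it is sound.
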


Let us give an algebraic relation between the $b$-function and an indicial 
polynomial.

\begin{definition}\rm
For a prime ideal $\pp$ of $L[x]$, let $\kappa(\pp)$ be the
quotient field of the integral domain $L[x]/\pp$.
Then $\kappa(\pp)$ contains $L$ as a subfield. 
If $\pp$ is maximal, $\kappa(\pp) = L[x]/\pp$ is a finite
algebraic extension of $L$. 
There exists a natural ring homomorphism 
\[
L[x]_\pp \longrightarrow L[x]_\pp/L[x]_\pp\pp 
= \kappa(\pp),
\]
which is called the specialization to $\kappa(\pp)$. 
\end{definition}

\begin{example}\rm
If $\pp = \langle x_1^2-2 \rangle \subset \Q[x]$ with $n=2$, 
then $\kappa(\pp)$ is the field of rational functions
\[
\kappa(\pp) = (\Q[x_1]/\langle x_1^2-2 \rangle)(x_2)
\simeq \Q(\sqrt2)(x_2)
\]
and the specialization to $\kappa(\pp)$ is the substitution $x_1 = \sqrt 2$ or $x_1 = -\sqrt 2$ depending on
the choice from the two isomorphisms $\Q[x_1]/\langle x_1^2-2 \rangle \simeq
\Q(\sqrt2)$. 
\end{example}

\begin{proposition}
Let $\pp$ be a prime ideal of $L[x]$.
Asuume there exists an indicial polynomial $b(s,x) \in L[x]_\pp[s]$
of $u$ at $\pp$.  Then the specialization of $b(s,x)$ to $\kappa(\pp)[s]$
is uniquely determined. 
\end{proposition}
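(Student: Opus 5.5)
The plan is to copy the argument of part 1 of the first Proposition in Section \ref{section:introduction}, replacing the local ring $(\Osc_Y)_p$ by the local ring $R := L[x]_\pp$, its maximal ideal $\mvec_\pp := \pp R$, and evaluation at $p$ by the specialization $R \to \kappa(\pp) = R/\pp R$. We write $f \mapsto \bar f$ for the induced map $R[s]\to\kappa(\pp)[s]$ and set $J := J_Y(u)_\pp$, an ideal of $R[s]$. Let $b(s,x) = s^m + b_1 s^{m-1}+\cdots+b_m$ be an indicial polynomial, so that $m$ is the least degree of a monic element of $J$. If $b'$ is another indicial polynomial, then $b-b' \in J$ has degree less than $m$. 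It therefore suffices to prove the following claim: every $f\in J$ with $\deg_s f < m$ satisfies $\bar f = 0$. This gives $\bar b = \bar b'$.

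The first step uses the fact that $R$ is local. Let $a s^e$ be the leading term of $f$, with $e<m$. If $a\notin \pp R$, then $a$ is a unit of $R$, so $a^{-1}f$ is a monic element of $J$ of degree $e<m$. This contradicts the minimality of $m$. Hence $a\in\pp R$, that is, $\bar a = 0$.

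The second step is a descending induction on $\deg \bar f$, exactly as in the earlier proposition. Suppose $\bar f\neq 0$ and let $k := \deg \bar f$. The first step gives $k<e\le m-1$. Put $g := a\,b - s^{m-e} f$. This lies in $J$ because $J$ is an ideal of $R[s]$, and its $s^m$ terms cancel, so $\deg_s g<m$. Since $\bar a=0$, we get $\bar g = -s^{m-e}\bar f$, which is nonzero of degree $k+m-e>k$. Inducting on $m-\deg\bar f$ (the base case $\deg\bar f = m-1$ is ruled out directly, because then $e=m-1$ and the coefficient of $s^{m-1}$ in $\bar f$ is $\bar a=0$), the induction hypothesis gives $\bar g=0$. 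This is a contradiction, so $\bar f=0$.

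I do not expect a real obstacle. The only points to check are that $J$ is an ideal of $R[s]$, which holds because localization and tensoring preserve the ideal property, and that the units of $R$ are exactly the elements outside $\pp R$, which is the defining property of the localization at a prime. With these, the earlier proof goes through unchanged. In the same way one also gets the analogue of part 2: every $f\in J$ has $\bar f$ divisible by $\bar b$, obtained by dividing $f$ by the monic $b$ in $R[s]$ and applying the claim to the remainder.
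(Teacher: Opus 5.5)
Your proposal is correct and follows the paper's own proof. The paper likewise reruns the induction from part 1 of Proposition 1 in the local ring $L[x]_\pp$ with maximal ideal $L[x]_\pp\pp$, showing that every element of $J_Y(u)_\pp$ of $s$-degree less than $m$ lies in $L[x]_\pp\pp[s]$. Your treatment of the leading coefficient (unit versus element of $\pp L[x]_\pp$) and of the base case just spells out details the paper leaves implicit.
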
  

\begin{proof}
Let $m$ be the degree of $b(s,x)$ in $s$.
Then by the same argument as the proof of Proposition 1,
in the local ring $L[x]_\pp$ with the maximal ideal
$L[x]_\pp\pp$ instead of the local ring $(\Osc_Y)_p$, 
we can show by induction that any $f(s,x)\in L\otimes_K J_Y(u)_\pp$ of
degree less than $m$ in $s$ belongs to $L[x]_\pp \pp [s]$. 
\end{proof}  

\begin{proposition}
Let $\pp$ be a prime ideal of $L[x]$.
\begin{enumerate}
\item
  If there exsts the $b$-function $b(s) \in L[s]$ of $u$ at $\pp$,
  there exists an indicial polynomial $\tilde b(s,x) \in L[x]_\pp[s]$
  of $u$ at $\pp$; moreover the spcecialization of
  $\tilde b(s,x)$ to $\kappa(\pp)$
  belongs to $\overline L[s]$ and divides $b(s)$ in $\overline L[s]$,
  where $\overline L$ is the algebraic closure of $L$. 
\item
  If there exists an indicial polynomial $\tilde b(s)$ of $u$ at $\pp$
  which belongs to $L[s]$, then $\tilde b(s)$ is the $b$-function of
  $u$ at $\pp$.
\end{enumerate}
\end{proposition}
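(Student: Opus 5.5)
We follow the pattern of Proposition 2, working in the local ring $A := L[x]_\pp$ with maximal ideal $\mathfrak m := A\pp$ and residue field $\kappa(\pp) = A/\mathfrak m$. The ideal is $J := J_Y(u)_\pp \subset A[s]$. The proof of the preceding proposition shows that every $f \in J$ of degree less than $m$ in $s$ lies in $\mathfrak m[s]$, where $m$ is the degree of an indicial polynomial. This is the analogue of Proposition 1 and is the one tool we use.

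For assertion 1, the existence of $\tilde b(s,x)$ is immediate. Indeed, $b(s) \in J \cap L[s]$ is monic, so $J$ contains monic elements in $s$, and we may take one of smallest degree. For divisibility, divide $b(s)$ by the monic $\tilde b(s,x)$ in $A[s]$: $b(s) = q(s,x)\tilde b(s,x) + c(s,x)$ with $\deg_s c < m$. Since $c \in J$, it lies in $\mathfrak m[s]$. Specializing to $\kappa(\pp)$ gives $b(s) = \bar q(s)\,\bar{\tilde b}(s)$ in $\kappa(\pp)[s]$.

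It remains to see that $\bar{\tilde b}(s)$ has coefficients in $\overline L$. Here $\kappa(\pp)$ need not be algebraic over $L$ when $\pp$ is not maximal, so this is the point needing care. The monic $\bar{\tilde b}$ divides $b(s) \in L[s]$ in $\kappa(\pp)[s]$. Fix an algebraic closure of $\kappa(\pp)$ containing $\overline L$. There $b(s)$ splits into linear factors with roots in $\overline L$, so every root of $\bar{\tilde b}$ is a root of $b$, hence lies in $\overline L$. Therefore the coefficients of $\bar{\tilde b}$, being symmetric functions of these roots, lie in $\overline L$, and the factorization $b = \bar q\,\bar{\tilde b}$ shows $\bar{\tilde b}$ divides $b$ in $\overline L[s]$. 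One could also argue with the resultant technique of Proposition 2, but the root argument suffices for divisibility.

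For assertion 2, suppose $\tilde b(s) \in L[s]$ is an indicial polynomial. Then $\tilde b \in J \cap L[s]$, so $b_\pp(s)$ exists and divides $\tilde b(s)$ in $L[s]$. Conversely, $b_\pp(s)$ is a monic element of $J$, so $\deg b_\pp \ge m = \deg \tilde b$ by the minimality of $m$. Both are monic, so $b_\pp = \tilde b$. The only subtle step in the whole argument is the passage from $\kappa(\pp)$ to $\overline L$ in assertion 1, which is handled by the splitting of $b(s)$.
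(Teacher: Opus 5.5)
Your proposal is correct and follows the paper's proof. As in the paper, you divide $b(s)$ by the monic $\tilde b(s,x)$ in $L[x]_\pp[s]$, use the preceding proposition to place the remainder in $L[x]_\pp\pp[s]$, and specialize to $\kappa(\pp)$. You additionally spell out the root argument showing that the specialization has coefficients in $\overline L$, which the paper only asserts, and you get assertion 2 directly from degree minimality rather than from assertion 1; both steps are sound.
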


\begin{proof}
  1. The existence of $\tilde b(s,x)$ immediately follows from that of $b(s)$. 
  Dividing $b(s)$ by $\tilde b(s,x)$ in $L[x]_\pp$, write
  \[
   b(s) = q(s,x)\tilde b(s,x) + r(s,x)
   \]
   with the degree of $r(s,x)$ in $s$ less than that of $\tilde b(s,x)$.
   Then $r(s,x)$ belongs to $L[x]_\pp\pp[s]$ by the preceding proposition.
   Thus the specialization $\tilde b(s)$
   of $\tilde b(s,x)$ to $\kappa(\pp)[s]$ divides
   $b(s)$  in $\kappa(\pp)[s]$. This implies $\tilde b(s) \in \overline L[s]$.
   The statement 2 follows from 1. 
\end{proof}  

Before concluding this section, let us give an algebraic formulation of
Theorem 1 concerning $b$-functions and indicial polynomials
with respect to coherent ideals. 
Let $M$ be a finitely generated $D_n$-module and $u$ a nonzero element of $M$. 
Let $F$ be an ideal of $K[x]$ and $f_1,\dots,f_d$ be a set of generators
of $F$.
Let $\iota : K^n \rightarrow K^n \times K^d$ be a polynomial map defined by
\[
\iota(x) = (x,f_1(x),\dots,f_d(x)) 
\]
and set $Z := \iota(K^n)$. 
Then the (algebraic) direct image $\iota_*(M)$ of $M$ with respect to $\iota$
is 
\[
\iota_*(M) := M \otimes_{K[x]}B_{Z|K^{n+d}}, 
\]
where
\[
B_{Z|K^{n+d}} := H^d_{[Z]}(K[x,t]) \simeq
K[x,t,h^{-1}]/\sum_{j=1}^d K[x,t,h_j^{-1}]
\]
with $h := (t_1-f_1)\cdots(t_d-f_d)$ and
$h_j := h/(t_j-f_j)$. 
Let $\delta(t_1-f_1)\cdots\delta(t_d-f_d)$ be the residue class of
$(t_1-f_1)^{-1}\cdots(t_d-f_d)^{-1}$ in $B_{Z|K^{n+d}}$ and set
\[
\iota_*(u) := u\otimes\delta(t_1-f_1)\cdots\delta(t_d-f_d).
\]

\begin{theorem}\label{th:BMSalg}
  Let $L$ be an extension field of $K$
  and let $\pp$ be a prime ideal of $L[x]$. 
  \begin{enumerate}
\item
    Let $b(s)$ be the $b$-function of $\iota_*(u)$ along $K^n \times \{0\}$
    at a prime ideal $\pp$ of $L[x]$.
    Then $b(s-d)$ does not depend on the choice of a set $f_1,\dots,f_d$
    of generators of $F$, which is called the $b$-function of $u$
    with respect to the ideal $F$ at $\pp$. 
  \item
    Let $b(s,x) \in L[x]_\pp[s]$
    be an indicial polynomial of $\iota_*(u)$ along
    $K^n\times\{0\}$ at $\pp$.
    Then $b(s-d,x)$ does not depend on the choice of a set $f_1,\dots,f_d$
    of generators of $F$, which is called an indicial polynomial of $u$
    with respect to $F$ at $\pp$. 
  \end{enumerate}
\end{theorem}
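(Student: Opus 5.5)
The argument follows the proof of Theorem \ref{th:BMS}, transplanted to the polynomial setting, with local analytic germs at $p$ replaced by localizations at $\pp$. Any two finite generating sets $f_1,\dots,f_d$ and $g_1,\dots,g_e$ of $F$ are connected by a chain of moves, each adding or deleting one generator that is a $K[x]$-linear combination of the others. Concretely, pass from $(f)$ to $(f,g)$ by adding the $g_j$ one at a time, and then from $(f,g)$ to $(g)$ by removing the $f_i$ one at a time. So it suffices to treat adding a single generator $f_{d+1}=a_1f_1+\cdots+a_df_d$ with $a_i\in K[x]$. Here there is a global advantage over the analytic case: no shrinking to a Stein neighborhood is needed, since the $a_i$ are polynomials. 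Define the polynomial embedding
\[
\iota'(x,t)=(x,t,a_1(x)t_1+\cdots+a_d(x)t_d).
\]
Then $\tilde\iota=\iota'\circ\iota$, and hence $\tilde\iota_*(u)=\iota'_*(\iota_*(u))$ as algebraic direct images. Moreover, $\iota'$ maps $K^n\times\{0\}$ to $K^n\times\{0\}\subset K^{n+d+1}$.

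The key step is an algebraic analogue of Lemma \ref{lemma:BMS}. After the polynomial automorphism $t_{d+1}\mapsto t_{d+1}-\sum a_it_i$ of $K^{n+d+1}$ we may assume $\iota'(x,t)=(x,t,0)$; this change of variables has polynomial inverse and preserves the ideal $\langle t_1,\dots,t_d,t_{d+1}\rangle$, so it preserves the $V$-filtration along $K^n\times\{0\}$. Changing $\theta$ by an element of $V^{-1}$ does not affect membership in $\gr^0$. The goal is the ideal identity
\[
J_{K^n\times 0}(\iota'_*v)=\{\,b(s,x)\mid b(s-1,x)\in J_{K^n\times 0}(v)\,\}
\]
in $K[s,x]$, where $v=\iota_*(u)$. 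The proof is the computation of Lemma \ref{lemma:BMS}, now with $D_{n+d+1}$ and $v\otimes\delta(t_{d+1})$. For the inclusion from right to left, use $t_{d+1}\partial_{t_{d+1}}=\partial_{t_{d+1}}t_{d+1}-1$ together with $t_{d+1}\delta=0$. For the inclusion from left to right, expand the $V^{-1}$ term as $\sum Q_{ij}\partial_{t_{d+1}}^it_{d+1}^j$, observe that the $\delta^{(i)}(t_{d+1})$ are $D_{n+d}$-linearly independent in $M'\otimes B$, and extract the coefficient of $\delta(t_{d+1})$.

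The identity is an equality of ideals of $K[s,x]$ under $s\mapsto s+1$, and the coefficients involved are polynomial. It therefore survives the base change $L\otimes_K$ and the localization $L[x]_\pp\otimes$, and so
\[
J(\tilde\iota_*u)_\pp=\{\,b(s+1,x)\mid b\in J(\iota_*u)_\pp\,\}.
\]
Both claims follow at once. The monic generator of $J_\pp\cap L[s]$ shifts by $1$, which is compensated by $d\mapsto d+1$ in $b(s-d)$. The set of monic elements of minimal $s$-degree shifts in the same way, so the set of indicial polynomials $b(s-d,x)$ at $\pp$ is unchanged.

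The main obstacle is the left-to-right extraction step in the algebraic setting. One must check that $\iota'_*(M')\simeq\bigoplus_i M'\otimes\delta^{(i)}(t_{d+1})$ as a $D_{n+d}$-module, so that the coefficient comparison is legitimate. One must also check that $Q_{00}$ lies in $V^{-1}$ of $D_{n+d}$, which it does because the $V$-order is additive in the $t_{d+1}$-part. I would also confirm that the coordinate change does not disturb the identification of $J$: since $\theta$ changes only modulo $V^{-1}$, the residue $s$ in $\gr^0$ stays the same.
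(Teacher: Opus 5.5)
Your proposal is correct and follows the same route as the paper. The paper disposes of this theorem by saying its proof is ``almost the same as that of Theorem~\ref{th:BMS}'': add one generator $f_{d+1}=\sum a_if_i$, factor $\tilde\iota=\iota'\circ\iota$, and apply Lemma~\ref{lemma:BMS}. Your chaining of generating sets, the straightening coordinate change, and the compatibility of the shift $s\mapsto s+1$ with $L\otimes_K$ and localization at $\pp$ simply make explicit the details the paper leaves to the reader.
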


The proof is almost the same as that of Theorem 1.  
However, an algebraic formulation of Theorem 2 is not 
straightforward; one needs to algebraically 
flatten out the algebraic variety $Y$, 
which is not always possible, 
or to work with $D$-modules on $Y$ directly.
Nevertheless, 
it does not cause any problem concerning the computation of indicial polynomials
and $b$-functions along $Y$ in the analytic sense via {\em algebraic}
graph embedding as in Theorem 3, as will be certified in the next section.

\section{Comparison between analytic and algebraic  indicial polynomials}

In this section, we assume $K = \C$.
As in the preceding section, we denote  
$D_{n+d}$ the ring of differential operators
in $(x,t) = (x_1,\dots,x_n,t_1,\dots,t_d)$ with coefficients
in $\C[x,t]$, and denote $D_n$ the ring of differential operators
in $x = (x_1,\dots,x_n)$ with coefficients in $\C[x]$. 

Let $M$ be a finitely generated $D_{n+d}$-module and
let $u$ be a nonzero element of $M$. Set
\[
\Msc := \Dsc_X \otimes_{D_{n+d}}(D_{n+d}u)
\]
with $X := \C^{n+d}$. 
Since $\Dsc_X$ is flat over $D_{n+d}$, we can also regard $\Msc$
as a $\Dsc_X$-submodule of $\Dsc_X \otimes_{D_{n+d}}M$
generated by $1\otimes u$. 
Set $Y := \C^n\times \{0\} \subset \C^n \times \C^d$. 

\begin{theorem}\label{th:comparison}\rm
Set $\tilde u := 1\otimes u \in \Msc$ and 
let $p$ be an arbitrary  point of $Y$.
\begin{enumerate}
\item    
The $b$-function of $\tilde u$ 
along $Y$ at $p$ coincides with 
the $b$-function of $u$ along $Y$ at the maximal ideal $Z_\C(p)$
of $\C[x]$ corresponding to $p$. 
\item
  Let $b(s,x) \in \C[x]_{Z_\C(p)}[s]$ be
  an indicial polyomial of $u$ along $Y$ 
  at $Z_\C(p)$. 
  Then $b(s,x)$ is also an indicial polynomial of
  $\tilde u$ along $Y$ at $p$ via the natural inclusion
  $\C[x]_{Z_\C(p)} \subset (\Osc_Y)_p$. 

\end{enumerate}
\end{theorem}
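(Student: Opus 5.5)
The plan is to compare the two ideals $\Jsc_Y(\tilde u)_p \subset (\Osc_Y)_p[s]$ and $J_Y(u)_{Z_\C(p)} \subset \C[x]_{Z_\C(p)}[s]$. The key claim is
\[
\Jsc_Y(\tilde u)_p = (\Osc_Y)_p \otimes_{\C[x]_{Z_\C(p)}} J_Y(u)_{Z_\C(p)},
\]
that is, the analytic ideal is generated by the algebraic one. Both statements 1 and 2 will be read off from this equality together with the degree characterization used in the proof of Proposition 1.

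The inclusion $\supset$ is immediate. If $f(s,x) \in J_Y(u)$, there is $Q \in V_Y^{-1}(D_{n+d})$ with $(f(\theta,x)+Q)u=0$. Then the same identity holds for $\tilde u$, and $Q$ lies in $V_Y^{-1}(\Dsc_X)_p$. Denominators $a(x)\notin Z_\C(p)$ are units in $(\Osc_Y)_p$, so they cause no problem.

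The reverse inclusion is the main obstacle. I would use flatness. The annihilator of $\tilde u$ in $(\Dsc_X)_p$ is $(\Dsc_X)_p\otimes_{D_{n+d}}\Ann_{D_{n+d}}u$, because $\Dsc_X$ is flat over $D_{n+d}$. I then need compatibility of the $V$-filtration with this extension. To get it, take a Gröbner basis $G$ of $\Ann_{D_{n+d}}u$ with respect to a weight vector refining the $V$-filtration (weight $-1$ on $t$, $+1$ on $\partial_t$), as in \cite{OakuDuke}, \cite{OTalgDmod}. The standard division argument, applied in the local analytic ring $(\Dsc_X)_p$ with a tangent-cone/local order (or via the homogenized Weyl algebra), should show the following: every element of $V^0_Y(\Dsc_X)_p\cap \Ann\,\tilde u$ has a representation $\sum c_g g$ with $\ord_V(c_g g)\le 0$. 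Consequently $\gr^0_Y(\Ann\,\tilde u)$ is generated over $\gr^0_Y(\Dsc_X)_p$ by the initial forms of $G$, i.e.
\[
\gr^0_Y(\Ann_{(\Dsc_X)_p}\tilde u) = \gr^0_Y(\Dsc_X)_p\otimes \gr^0_Y(\Ann_{D_{n+d}}u).
\]
I would try to prove this first through flatness of $\gr_Y((\Dsc_X)_p)$ over $\gr_Y(D_{n+d})$, using the filtered free resolution (\ref{eq:Vresolution}), whose existence for the algebraic module tensors up to one for $\Msc_p$ \cite{GO}. Once this is in hand, intersect with $(\Osc_Y)_p[s]$. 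The ring $\gr^0_Y$ is the ring of operators in $x,\partial_x$ whose coefficients are polynomial in the $t_i\partial_{t_j}$. Extracting the part lying in $\Osc_Y[s]$ is an elimination step, i.e. intersection with a subring. It commutes with the faithfully flat base change $\C[x]_{Z_\C(p)}\to(\Osc_Y)_p$, applied coefficientwise, provided the elimination is done via a Gröbner basis of an elimination order, which is compatible with the extension. This yields $\subset$.

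With the equality in hand, the two statements follow. For 1: $(\Osc_Y)_p$ is faithfully flat over $\C[x]_{Z_\C(p)}$, so intersecting with $\C[s]$ gives the same ideal on both sides, and hence the same monic generator. To see this, consider the ideal $I_k$ of leading coefficients of elements of degree $\le k$ in $s$, as in the proof of the field-extension proposition. The analytic leading-coefficient ideal is $(\Osc_Y)_p\,I_k$, and $(\Osc_Y)_pI_k$ is the unit ideal if and only if $I_k\not\subset Z_\C(p)$. So the minimal degrees of monic elements coincide. Hence any algebraic indicial polynomial $b(s,x)$, being monic of this minimal degree and lying in $\Jsc_Y(\tilde u)_p$, is an analytic indicial polynomial. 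This proves 2. For the $b$-function, $J\cap\C[s]$ on both sides is obtained as the elimination ideal of $J$ followed by the nonvanishing of a coefficient at $p$. Faithful flatness of the local ring extension shows that the ideal $J_Y(u)_{Z_\C(p)}\cap\C[s]$ and its analytic counterpart agree.
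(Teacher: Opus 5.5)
Your outline matches the paper's: flatness of $\Dsc_X$ over $D_{n+d}$ for the annihilator, a $V$-adapted Gr\"obner basis, the generation statement (\ref{eq:gaga_gr}) for $\gr^0_Y$ of the annihilator, flat base change to reach (\ref{eq:gaga_J}), and faithful flatness of $(\Osc_Y)_p$ over $\C[x]_\pp$, $\pp=Z_\C(p)$, for both assertions. The gap is that (\ref{eq:gaga_gr}), the one step with real content, is asserted rather than proved.

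The division route is not available as stated. A Gr\"obner basis of $\Ann_{D_{n+d}}u$ for a global order is in general not a standard basis for any local order on $(\Dsc_X)_p$. Division with convergent coefficients also needs its own convergence theory, which would bring you back to comparing an analytic standard basis with the algebraic one.

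The second route assumes that the algebraic $V$-strict resolution ``tensors up'' to a $V$-strict one for $\Msc_p$, and that is precisely the claim to be proved. The base change $D_{n+d}\to(\Dsc_X)_p$ does not act on the filtered pieces $V^k_Y(D_{n+d})$, which are not $D_{n+d}$-modules. Flatness of $\gr_Y(\Dsc_X)_p$ over $\gr_Y(D_{n+d})$ only gives exactness of the \emph{graded} sequence, and strictness of the analytic filtered presentation still has to be recovered from it.

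The missing idea is the paper's syzygy-lifting argument.
\begin{enumerate}
\item Take $P_1,\dots,P_k$ with $V_Y^i(D_{n+d})\cap\Ann_{D_{n+d}}u=\sum_j V_Y^{i-m_j}(D_{n+d})P_j$ for all $i$. Take syzygies $Q_1,\dots,Q_l$ with shifts $m'_\nu=\ord_Y Q_\nu$ making (\ref{eq:exactAlg}) exact.
\item Since $\gr_Y^i(\Dsc_X)_p=(\Osc_Y)_p\otimes_{\C[x]}\gr_Y^i(D_{n+d})$ and $(\Osc_Y)_p$ is flat over $\C[x]$, the sequence (\ref{eq:exactAn}) is exact.
\item Given $P=\sum_j A_jP_j\in V_Y^0(\Dsc_X)_p\cap\Ann\,\tilde u$ with $A_j\in(\Dsc_X)_p$, let $i=\max_j(\ord_Y A_j+m_j)$.
\item If $i>0$, the order-$i$ symbols of the $A_j$ form an element of $\ker\tilde\varphi$ in degree $i$. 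So there are $U_\nu\in V_Y^{i-m'_\nu}(\Dsc_X)_p$ with $(A_j)_j-\sum_\nu U_\nu Q_\nu\in V_Y^{i-1}[\mvec](\Dsc_X^k)_p$. This tuple still represents $P$, because $\sum_\nu U_\nu Q_\nu$ is a syzygy of $(P_1,\dots,P_k)$.
\item Since $i$ is finite, finitely many steps give $A_j\in V_Y^{-m_j}(\Dsc_X)_p$, hence (\ref{eq:gaga_gr}). No completeness of the $V$-filtration is needed.
\end{enumerate}

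With this supplied, the rest of your argument works. Your observation that passing to $\Osc_Y[s]$ just intersects two $\C[x]$-submodules of $\gr^0_Y(D_{n+d})$, which commutes with flat base change, is cleaner than the paper's appeal to ``the same argument''; no elimination order is needed.

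For assertion 1, the precise statement is this: $b\in\C[s]$ lies in $\Jsc_Y(\tilde u)_p$ iff its class in $(\Osc_Y)_p\otimes_{\C[x]_\pp}\bigl(\C[x]_\pp[s]/J_Y(u)_\pp\bigr)$ vanishes, iff (by faithful flatness) its class in $\C[x]_\pp[s]/J_Y(u)_\pp$ vanishes. Note that $J_Y(u)_\pp\cap\C[s]$ is not an elimination ideal of $J_Y(u)$ followed by evaluation at $p$, as your last sentence suggests.
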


\begin{proof} 
Since $\Dsc_X$  is flat over $D_{n+d}$, we have
\[
\Ann_{\Dsc_X}\tilde u = 
\Dsc_X\otimes_{D_{n+d}}\Ann_{D_{n+d}}u
= 
\Osc_X\otimes_{\C[x,t]}\Ann_{D_{n+d}}u. 
\]
As in Section \ref{section:inv}, 
for an integer vector $\mvec = (m_1,\dots,m_k) \in \Z^k$, 
we define the shifted $V$-filtration
\begin{align*}
V_Y^i[\mvec]((D_{n+d})^k) &:= V_Y^{i-m_1}(D_{n+d}) \oplus \cdots \oplus
V_Y^{i-m_k}(D_{n+d})
\end{align*}
on the free module $(D_{n+d})^k$.  
The associated graded module is defined to be
\begin{align*}
\gr_Y[\mvec]((D_{n+d})^k) &= \bigoplus_{i\in\Z}\gr_Y^i[\mvec]((D_{n+d})^k)
\end{align*}
with
\begin{align*} 
\gr_Y^i[\mvec]((D_{n+d})^k) &= V_Y^i[\mvec]((D_{n+d})^k)/V_Y^{i-1}[\mvec]((D_{n+d})^k).
\end{align*}
We omit $[\mvec]$ if $\mvec$ is the zero vector. 

There exist nonzero $P_1,\dots,P_k \in D_{n+d}$ such that 
\[
V_Y^i(D_{n+d}) \cap \Ann_{D_{n+d}} u  
= V_Y^{i-m_1}(D_{n+d})P_1 + \cdots + V_Y^{i-m_k}(D_{n+d})P_k 
\quad (\forall i\in\Z)
\]
with $m_j = \ord_Y(P_j) := \min\{i\in\Z \mid P_j \in V_Y^i(D_{n+d})\}$.
Moreover, there exist nonzrero $Q_1,\dots,Q_l \in (D_{n+d})^k$ such that 
\[
V_Y^i[\mvec']((D_{n+d})^l)
\stackrel{\psi}{\longrightarrow} 
V_Y^i[\mvec]((D_{n+d})^k)
\stackrel{\varphi}{\longrightarrow} V_Y^i(D_{n+d}) \cap \Ann_{D_{n+d}} u  
\rightarrow 0
\]
is exact for any integer $i$, where the homomorphisms $\varphi$ and $\psi$ 
are defined by 
\begin{align*}
\varphi(A_1,\dots,A_k) = A_1P_1 + \cdots + A_kP_k 
\quad (A_1,\dots,A_k \in D_{n+d}), 
\\
\psi(B_1,\dots,B_l) = B_1Q_1 + \cdots + B_lQ_l 
\quad (B_1,\dots,B_l \in D_{n+d})
\end{align*}
with $\mvec' := (\ord_Y(Q_1),\dots,\ord_Y(Q_l))$. 
In fact, $P_i$ and $Q_j$ can be computed as Gr\"obner bases with respect to 
orderings adapted to the shifted $V$-filtrations;  
see \cite{OTalgDmod} for details. 
In particular, the induced sequence
\begin{equation}\label{eq:exactAlg}
\gr_Y^i[\mvec']((D_{n+d})^l)
\stackrel{\overline\psi}{\longrightarrow} 
\gr_Y^i[\mvec]((D_{n+d})^k)
\stackrel{\overline\varphi}{\longrightarrow} \gr_Y^i(\Ann_{D_{n+d}} u)  
\rightarrow 0
\end{equation}
is exact with 
\[
\gr_V^i(\Ann_{D_{n+d}} u) := (V_Y^i(D_{n+d})\cap \Ann_{D_{n+d}} u)
/(V_Y^{i-1}(D_{n+d})\cap \Ann_{D_{n+d}} u)
.
\]

Let us show that the stalk of the sheaf
\[
\gr_Y^0(\Ann_{\Dsc_X} \tilde u) 
:= (V_Y^0(\Dsc_X)\cap \Ann_{\Dsc_X} \tilde u)
/(V_Y^{-1}(\Dsc_X)\cap \Ann_{\Dsc_X} \tilde u)
\subset \gr_Y^0(\Dsc_X)
\]
at $p$ is generated by $\gr_Y^0(\Ann_{D_{n+d}} u)$ 
over $(\Osc_X)_p$, i.e,
\begin{equation}\label{eq:gaga_gr}
\gr_Y^0(\Ann_{\Dsc_X} \tilde u)_p 
=  (\Osc_Y)_{p}\otimes_{\C[x]}\gr_Y^0(\Ann_{D_{n+d}}u) 
\end{equation}
holds.
Since $(\Osc_Y)_{p}$ is flat over $\C[x]$, 
(\ref{eq:exactAlg}) yields the exact sequence
\begin{equation}\label{eq:exactAn}
\gr_Y^i[\mvec']((\Dsc_X)^l)_{p}
\stackrel{\tilde\psi}{\longrightarrow} 
\gr_Y^i[\mvec]((\Dsc_X)^k)_{p}
\stackrel{\tilde\varphi}{\longrightarrow}
(\Osc_Y)_{p}\otimes_{\C[x]}\gr_Y^i(\Ann_{D_{n+d}} u)  
\rightarrow 0. 
\end{equation}
Assume $P$ belongs to 
$(V_Y^0(\Dsc_X)_{p}\setminus V_Y^{-1}(\Dsc_X)_{p})
\cap \Ann_{(\Dsc_X)_{p}}\tilde u$. 
Since $\Ann_{(\Dsc_X)_{p}}\tilde u$ is generated by 
$\Ann_{D_{n+d}}u$ over $(\Dsc_X)_{p}$, there exist 
$A_1,\dots,A_k \in (\Dsc_X)_{p}$ such that
\[
P = A_1P_1 + \cdots + A_kP_k. 
\]
Set $i := \max\{\ord_Y(A_jQ_j) \mid 1 \leq j \leq k\}$. 
If $i=0$, then the residue class $\sigma_Y(P)$ of $P$ in $\gr_Y^0(\Dsc_X)$
belongs to 
$(\Osc_Y)_{p}\otimes_{\C[x]}\gr_Y^0(\Ann_{D_{n+d}}v)$ and we are done. 
Suppose $i > 0$. Then we have
\[
\sigma_Y^{i-m_1}(A_1)\sigma_Y(P_1) +\cdots+ 
\sigma_Y^{i-m_k}(A_k)\sigma_Y(P_k) = 0,  
\]
where we set $\sigma_Y^i(A) = \sigma_Y(A)$ if $\ord_Y A = i$
and $\sigma_Y^i(A) = 0$ if $\ord_Y A < i$. 
In view of the exact sequence (\ref{eq:exactAn}), 
there exist $U_1,\dots,U_l \in (\Dsc_X)_{p}$ such that 
$\ord_Y(U_j) = i-m'_j$ and 
\[
(B_1,\dots,B_k) := 
(A_1,\dots,A_k) - \sum_{j=1}^l U_jQ_j \,\in\, V_Y^{i-1}[\mvec](\Dsc_X^k)_{p}. 
\] 
Hence we get another expression
\[
P = B_1P_1 + \cdots + B_kP_k, 
\qquad
\ord_Y(B_jQ_j) \leq i-1 \quad (1\leq j \leq k).
\]
Proceeding inductively we obtain $A_j \in V_Y^{-m_j}(\Dsc_X)_{p}$ such that
\[
P = A_1P_1 + \cdots + A_kP_k. 
\]
This implies (\ref{eq:gaga_gr}).
Next let $s$ be the residue class of
$\theta = t_1\partial_{t_1}+\cdots +t_d\partial_{t_d}$ in
$\gr_Y^0(\Dsc_X)$ or $\gr_Y^0(D_{n+d})$ and set
\begin{align*}
\Jsc_Y(\tilde u) &:= \gr_Y^0(\Ann_{\Dsc_X} \tilde u) \cap \Osc_Y[s],
\quad
J_Y(u) := \gr_Y^0(\Ann_{D_{n+d}}u) \cap \C[s,x]. 
\end{align*}
In the same argument as above, with respect to the
order of differential operator instead of $\ord_Y$, we can show that 
\begin{equation}\label{eq:gaga_J}
  \Jsc_Y(\tilde u)_p = (\Osc_Y)_p\otimes_{\C[x]} J_Y(u)
\end{equation}
holds by virtue of (\ref{eq:gaga_gr}). 
Let us define filtrations on $\Osc_Y[s]$ and on $\C[s,x]$ by
\begin{align*}
F_s^k(\Osc_Y[s]) &:= \{f(s,x)\in \Osc_Y[s] \mid \deg_s f(s,x) \leq k\},
\\
F_s^k(\C[s,x]) &:= \{f(s,x)\in \C[s,x] \mid \deg_s f(s,x) \leq k\},
\end{align*}
where $\deg_s$ denotes the degree in $s$. Then
\[
\gr_s^k(\Osc_Y[s]) := F_s^k(\Osc_Y[s])/F_s^{k-1}(\Osc_Y[s]),
\quad
\gr_s^k(\C[s,x]) := F_s^k(\C[s,x])/F_s^{k-1}(\C[s,x])
\]
are free modules of rank one over $\Osc_Y$ and $\C[x]$
respectively. 
Define submodules 
\begin{align*}
\gr_s^k(\Jsc_Y(\tilde u)) &:=
(F_s^k(\Osc_Y[s]) \cap \Jsc_Y(\tilde u))/
(F_s^{k-1}(\Osc_Y[s]) \cap \Jsc_Y(\tilde u)),
\\
\gr_s^k(J_Y(u)) &:= (F_s^k(\C[s,x]) \cap J_Y(u))
/(F_s^{k-1}(\C[s,x]) \cap J_Y(u))
\end{align*}
of $\gr_s^k(\Osc_Y[s])$ and of $\gr_s^k(\C[s,x])$ respectively. 
Note that
\[
\gr_s^k(\Jsc_Y(\tilde u))_p = (\Osc_Y)_p\otimes_{\C[x]}\gr_s^k(J_Y(u))
\]
holds in view of (\ref{eq:gaga_J}).

The degree of an indicial polynomial of $\tilde u$ 
at $p$ is the minimum integer $k$ such that
  $\gr_s^k(\Jsc_Y(\tilde u)) = \gr_s^k(\Osc_Y[s])$. 
  On the other hand, the degree of an indicial polynomial of
  $u$ at $\pp := Z_{\C}(p)$  is 
  the minimum integer $k$ such that
\[
\C[x]_\pp\otimes_{\C[x]}\gr_s^k(J_Y(u))
= \C[x]_\pp\otimes_{\C[x]}\gr_s^k(\C[s,x]) .
\]
These two conditions are equivalent 
since $(\Osc_Y)_p$ is faithfully flat over $\C[x]_\pp$ and
\[
(\Osc_Y)_p\otimes_{\C[x]_\pp}\C[x]_\pp\otimes_{\C[x]}\gr_s^k(J_Y(u)) =
(\Osc_Y)_p\otimes_{\C[x]}\gr_s^k(J_Y(u)) = \gr_s^k(\Jsc_Y(\tilde u)).
\]
Thus the degree of an indicial polynomial of $\tilde u$ at $p$
  is the same as the degree of an indicial polynomial of $u$ at $\pp$.
  This implies that $b(s,x)$ is also an indicial polynomial of $\tilde u$
  at $p$. 

  The coincidence of the $b$-function of $\tilde u$ at $p$ and that of
  $u$ at $\pp$ follows from
  \[
  \Jsc_Y(\tilde u)_p \cap \C[s]
  = (\Osc_Y)_p \otimes_{\C[x]}J_Y(u) \cap \C[s]
  = \C[x]_\pp\otimes_{\C[x]}J_Y(u) \cap \C[s], 
    \]
which is also a consequence of the faithfull flatness
of $(\Osc_Y)_p$ over $\C[x]_\pp$.
In fact, if $b$ belongs to $\Jsc_Y(u)_p\cap \C[s]$,
then we have
\begin{align*}
0 &= (\Jsc_Y(u)_p + (\Osc_Y)_p b)/\Jsc_Y(u)_p
\\
&= (\Osc_Y)_p\otimes_{\C[x]_\pp} ((\C[x]_\pp\otimes_{\C[x]}J_Y(u)
  + \C[x]_\pp b)/\C[x]_\pp\otimes_{\C[x]}J_Y(u)). 
\end{align*}
This implies $b \in \C[x]_\pp\otimes_{\C[x]}J_Y(u)$. 
\end{proof}

\section{Algorithms}\label{section:algorithm}

\subsection{Computation of the graph embedding}

Let $X := K^n$ be the $n$-dimensional affine space over a field $K$
of characteristic $0$ 
and let $x = (x_1,\dots,x_n)$ be its coordinates. 
Let us denote by $D_n$ the ring of differential operators with 
polynomial coefficients in $n$ variables $x$ over $K$.
Let $t = (t_1,\dots,t_d)$ be the coordinates of the affine space $K^d$. 
We denote $D_{n+d}$ the ring of differential operators
in $(n+d)$-variables $(x,t)$. 

Let $f_1,\dots,f_d$ be polynomials in $K[x]$
and let $\iota$ be the embedding defined by
\[
\iota\,:\, K^n \ni x \longmapsto (x,f_1(x),\dots,f_d(x)) \in K^n \times K^d.  
\]
For a $D$-module $M = Du$, the direct image $\iota_*(M)$ can be computed 
by the following algorithm, which is essentially due to Walther \cite{Walther}. 

\begin{algorithm}\label{alg:embedding}\rm
  Input: A set $G$ of generators of the left ideal $\Ann_{D_n}u$ of $D_n$.
  \\
  Output: A set $G'$ of generators of the left ideal $\Ann_{D_{n+d}}\iota_*(u)$
  of $D_{n+d}$.
\begin{enumerate}
\item    
For $P = P(x,\partial_{x_1},\dots,\partial_{x_n}) \in G$, set
\[
\tau(P,f_1,\dots,f_d) := P\left(x, \,
\partial_{x_1} + \sum_{j=1}^d \frac{\partial f_j}{\partial{x_1}}
\partial_{t_j},
\dots,
\partial_{x_n} + \sum_{j=1}^d \frac{\partial f_j}{\partial{x_n}}
\partial_{t_j}
\right) .
\]
This substitution is well-defined in the ring $D_{n+d}$ 
since the operators which are substituted for $\partial_{x_1},\dots,
\partial_{x_n}$ commute with one another. 
\item
Set
\[  
  G' := \{\tau(P,f_1,\dots,f_d) \mid P \in G \} \cup 
\{ t_j  - f_j(x) \mid j=1,\dots,d\} . 
\]
\end{enumerate}
\end{algorithm}

Let us show the correctness.
Let $I$ be the left ideal of $D_{n+d}$ generated by $G'$.
The inclusion $I \subset \Ann_{D_{n+d}}\iota_*(u)$ follows from
the equality
\begin{align*}
\left(\partial_{x_i} + \sum_{j=1}^d \frac{\partial f_j}{\partial x_i}
\partial_{t_j} \right)\iota_*(u)
&= 
\left(\partial_{x_i} + \sum_{j=1}^d \frac{\partial f_j}{\partial x_i}
\partial_{t_j} \right) (u\otimes\delta(t_1-f_1)\cdots\delta(t_d-f_d))
\\ &
= (\partial_{x_i}u)\otimes \delta(t_1-f_1)\cdots\delta(t_d-f_d). 
\end{align*}
Conversely, suppose that $P \in D_{n+p}$ annihilates $\iota_*(u)$. 
We can rewrite $P$ in the form
\begin{multline*}
P = \sum_{\alpha\in\N^n,\nu\in\N^d}
p_{\alpha,\nu}(x) \partial_t^\nu
\Bigl(\partial_{x_1} + \sum_{j=1}^d \frac{\partial f_j}{\partial{x_1}}
\partial_{t_j} \Bigr)^{\alpha_1} 
\cdots
\Bigl(\partial_{x_n} + \sum_{j=1}^d \frac{\partial f_j}{\partial{x_n}}
\partial_{t_j} \Bigr)^{\alpha_n} 
\\
+ \sum_{j=1}^d Q_j\cdot(t_j-f_j(x))
\end{multline*}
with $p_{\alpha,\nu}(x) \in  K[x]$ and $Q_j \in D_{n+p}$. 
Setting $P_\nu := \sum_{\alpha \in \N^n}p_{\alpha,\nu}(x)\partial_x^\alpha$,
we get
\begin{align*}
0 &= P(u\otimes\delta(t_1-f_1)\cdots\delta(t_d-f_d)) 
= \sum_{\nu\in\N^d}\partial_t^\nu \tau(P_\nu,f_1,\dots,f_d)
(u\otimes \delta(t_1-f_1)\cdots\delta(t_d-f_d))
\\&
= \sum_{\nu\in\N^d}P_\nu u\otimes \partial_t^\nu \delta(t_1-f_1)\cdots\delta(t_d-f_d).  
\end{align*}
It follows that $P_\nu u = 0$ holds for each $\nu$. 
Thus we have
\[
 P = \sum_{\nu \in \N^d} \partial_t^\nu \tau(P_\nu,f_1,\dots,f_d) 
+ \sum_{j=1}^d Q_j\cdot(t_j-f_j(x))\,\, \in J. 
\]

\subsection{Computation of an indicial polynomial}

Following the notation in Section \ref{section:algebraic}, 
let $X := K^{n+d}$ be the $(n+d)$-dimensional affine space over $K$
and let $(x,t) = (x_1,\dots,x_n,t_1,\dots,t_d)$ be its coordinates. 
Let $Y := X \times \{0\} \subset X \times K^d$ be the $n$-dimensional
affine subspace of $X$ with the coordinates $x$. 

Let us denote by $D_n$ the ring of differential operators with 
polynomial coefficients in $n$ variables $x$ over $K$,
and by $D_{n+d}$ that in $(n+d)$-variables $(x,t)$.

First let us give an algorithm for computing algebraic indicial polynomials. 
Let $K$ be the base field, most typically $\Q$, over which the data are
defined. 
Let $u$ be a nonzero element of a finitely generated $D_{n+d}$-module $M$
as in Section \ref{section:algebraic}. 
First we compute the ideal
$J_Y(u) := K[s,x] \cap \gr_Y(\Ann_{D_{n+d}}u)$ 
by the first two steps of Algorithm 4.6 of \cite{OTalgDmod}.
An outline of the algorithm is as follows: 

\begin{algorithm}\label{alg:Jideal}\rm 
  Input: a set of generators of the left ideal $\Ann_{D_{n+d}}u$ of $D_{n+d}$.
  \\
  Output: a set of generators of the ideal $J_Y(u)$ of $K[s,x]$.   
\begin{enumerate}
\item
  Compute a Gr\"obner basis of $\Ann_{D_n+d}u$ which
  is adapted to the $V_Y$-filtration and extract a set of generators
  of $\gr_Y^0(\Ann_{D_{n+d}}u)$.
\item
  Compute the intersection $I:= \gr_Y^0(\Ann_{D_{n+d}}u)
  \cap K[t_1\partial_{t_1},\dots,t_d\partial_{t_d}]\otimes_K D_n$ 
  by using  multi-homogenization with respect to the pairs
  $(t_1,\partial_{t_1}),\dots,(t_d,\partial_{t_d})$.
\item
  Compute the intersection $J_Y(u) = I \cap K[\theta,x]$
  by elimination.
\end{enumerate}    
\end{algorithm}
  
Now that a set of generators of $J_Y(u)$ is obtained, the rest of
the computations are done in polynomial rings. 

\begin{algorithm}\rm \label{alg:indicial}
  Input: a set of generators of the ideal $J_Y(u)$ of $K[s,x]$ and
  a set of generators of a prime ideal $\pp$ of $K[x]$.
\\
Output: an indicial polynomial $b(s,x) \in K[x]_\pp[s]$
of $u$ along $Y$ at $\pp$ of $K[x]$ if there is one. 
\begin{enumerate}
\item
  Compute a reduced Gr\"obner basis $G = \{f_1,\dots,f_m\}$ of $J_Y(u)$
    with respect to a term order  $\prec$ such that $x_1,\dots,x_n \prec s$.
    Let $a_i \in K[x]$ be the leading coefficient of $f_i$ with respect to $s$. 
\item
  Set   $N := \max\{\deg_s f_i \mid i=1,\dots,m\}$.  
  For each $k = 0,\dots,N$, let $I_k$ be the ideal of $K[x]$ generated by
  $\{a_i \mid \deg_s f_i \leq k \,(i=1,\dots,m)\}$. 
\item
 If $I_N \not\subset\pp$, set
  $k_0 := \min\{k \mid I_k \not\subset \pp \}$.
  Then there exists $f_i \in G$ such that $\deg_s f_i = k$
  and $a_i \not\in \pp$. Set $b(s,x) := f_i(s,x)/a_i(x)$. 
  If $I_N \subset \pp$, then there is no indicial polynomial at $\pp$. 
\end{enumerate}
\end{algorithm}

If $K$ is a subfield of $\C$, this algorithm provides a stratification of $\C^n$
with respect to the degree of indicial polynomials.
In the notation of the preceding algorithm, set
\[
V_k := \mathbb{V}(I_k) = \{p \in \C^n \mid
\mbox{$a(p) = 0$ for any $a \in I_k$} \}. 
\]
Then $V_k$ is a decreasing sequence of alebraic sets of $\C^n$. 
An indicial polynomial at the maximal ideal $Z_\C(p)$
exists and its degree is $k$ for any $p \in V_{k-1} \setminus V_{k}$
with $k = 0,1,\dots,N$; here we set $V_{-1} = \C^n$.
Moreover, if we choose  $f_i \in G$ such that $\deg_s f_i = k$,
then $f_i/a_i$ is an indicial polynomial at $Z_\C(p)$ for any
$p \in V_{k-1} \setminus V_k$ such that $a_i(p) \neq 0$. 
Note that we do not need field extension in the computations above. 

\subsection{Computation of the $b$-function}

Let us review existing algorithms for computing $b$-functions.
First in \cite{OakuDuke}, 
an algorithm  (Algorithm 4.5) to compute the $b$-function
of a $D$-module along a hyperplane at a point of $K^n$ was given.
This algorithm is based on the fact that
the roots of the $b$-function at a point $p$
coincides with the zeros of the ideal $J_Y(u)(\overline K)|_{x=p}$
of $\overline K[s]$
(Thoerem 4.6 of \cite{OakuDuke}), which also follows from Propositions 1 and 2
in Section 1  
and does not require the (existence of) global $b$-function. 
However, it requires factorization of a univariate polynomial into linear factors in general, 
which in fact is unnecessary for the classical Bernstein-Sato polynomial
by virtue of Kashiwara's theorem \cite{KashiwaraBfunction}. 

Secondly, an algorithm for computing $b$-functions of a $D$-module
along an affine subspace of arbitrary codimension
with a corresponding stratification was proposed
in \cite{OTalgDmod} (Algorithm 4.6).
This algorithm makes use of a primary decomposition of the ideal
$L\otimes_K J_Y(u)$ for a stratification. 
In order to compute the $b$-function at a point, one can use the
method of \cite{OakuDuke} mentioned above. 
Note also that the algorithm in \cite{OTalgDmod} can be readily 
applied to the computation of the Bernstein-Sato polynomial of an
arbitrary variety in the sense of Budur-Musta\c t\~a-Saito \cite{BMS};
in fact, Example 4.7 in \cite{OTalgDmod} is the computation of
the Bernstein-Sato polynomial of the variety
defined by the ideal $\langle x^3-y^2, y^3-z^2\rangle$. 

Thirdly, Nishiyama and Noro \cite{NN} introduced a much
more efficient algorithm to compute a stratification
with respect to the Bernstein-Sato polynomial without primary decomposition. 
In fact, their method also applies to the $b$-function of a $D$-module   
if the global $b$-function exists and its factorization into linear factors 
in the splitting field is obtained.

Our algorithm for indicial polynomials provides a heuristic method for
computing the $b$-function with an associated stratification
without field extension, the global $b$-function, or primary decomposition,
especially for holonomic $D$-modules including the case with Bernstein-Sato
polynomials (see Example \ref{ex:BMS} below). 

\subsection{Case with parameters}

Let $y = (y_1,\dots,y_m)$ be indeterminates 
and $M = D_{n+d}[y]u$ be a $D_{n+d}[y]$-module generated by $u \in M$. 
For a prime ideal $\qq$ of $K[x,y]$, 
let  $b(s,x,y) \in K[x,y]_\qq[s]$ be an indicial polynomial
of $u$ along $Z :=  K^n \times K^d \times \{0\}
= \{(x,y,t) \in K^{n+m+d} \mid t=0\}$
at $\qq$ according to Definition \ref{def:indicial}. 
Then there exists $P \in K[x,y]_\qq\otimes_{K[x,y]}V_Z^{-1}(D_{n+m+d})$
such that
\[
(b(\theta,x,y) + P)u = 0
\]
holds in $K[x,y]_\pp\otimes_{K[x,y]}M$. 
It is easy to see that we can choose $P$ so that
$P \in K[x,y]_\qq\otimes_{K[x,y]}D_{n+d}[y]$, i.e.,
$P$ does not contain derivations with respect to $y$
in view of Algorithms \ref{alg:Jideal} and \ref{alg:indicial}.

Set $L := \kappa(\qq \cap K[y])$ and 
let $\rho : K[x,y] \rightarrow L[x]$
be the homomorphism defined by the specialization.
Suppose $\pp$ is a prime ideal of $L[x]$
such that $\pp \cap K[x] = \qq \cap K[x]$. 
Then $\rho$ induces a homomorphism
$
\rho_\qq : K[x,y]_\qq \rightarrow L[x]_\pp$. 
Set $b'(s,x) := \rho_\qq(b(s,x,y)) \in L[x]$.
Then 
\[
(b'(\theta,x) + \rho_\qq(P))u = 0  
\]
holds in $L[x]_\pp\otimes_{K[x,y]}M$.
For example, if $n=d=m=1$, $K = \Q$, and $\qq = \langle x,y^2-2\rangle$,
then we have $L = \Q[y]/\langle y^2-2\rangle \simeq \Q(\sqrt 2)$
and $\pp = L[x]x$. 
If $K$ is algebraically closed, it would be natural to take
a maximal ideal $\pp$ of $K[x]$ and a maximal ideal $\pp'$ of $K[y]$ and
let $\qq$ be the maximal ideal of $K[x,y]$ generated by $\pp$ and $\pp'$. 
Then the homomorphism $\rho_\qq: K[x,y]_\qq \rightarrow K[x]_\pp$
is simply the substitution $y = p'$, where $p'$ is the point of $K^m$
corresponding to $\pp'$ (see Example \ref{ex:AHG}).

\subsection{An operator associated with an indicial polynomial}

Once an indicial polynomial $b(s,x)$ of $u$ along $Y$ at a
prime ideal $\pp$ of $K[x]$ is obtained,
it is also important to find a differential operator  $P$ which belongs to
$K[x]_\pp\otimes_{K[x]}V_Y^{-1}(D_{n+d})$ and satisfies
\begin{equation}\label{eq:bP}
(b(\theta,x) + P)u = 0.
\end{equation}
Let $\prec$ be an ordering for the monomials in $D_{n+d}$ 
which is compatible with the filtration $\{V_Y^j(D_{n+d})\}$
and induces a well-ordering among the monomials in
$V_Y^j(D_{n+d}) \setminus V_Y^{j-1}(D_{n+d})$ for each $j$. 
Let $P_1,\dots,P_k$ be a Gr\"obner basis of $I := \Ann_{D_{n+d}}u$
with respect to $\prec$, which can be computed by the Buchberger algorithm
with homogenization although $\prec$ is not a well-ordering
(see \cite{OakuDuke}, \cite{OTalgDmod}). 
There exists $a(x) \in K[x]\setminus\pp$ such that $a(x)b(s,x) \in J_Y(u)$. 
By the (partial) division algorithm, we can find
$Q_1,\dots,Q_k,R \in D_{n+d}$ such that
\[
a(x)b(\theta,x) = Q_1P_1 + \cdots + Q_kP_k + R
\]
with $Q_iP_i \in V_Y^0(D_{n+d})$, $R \in V_Y^{-1}(D_{n+d})$. 
Then $(a(x)b(\theta,x)-R)u = 0$ holds. 
Hence we can put $P = -R/a$ in (\ref{eq:bP}). 

However, to find such $P$ of the smallest possible order is essential 
to the study of the solutions of the system of
differential equations corresponding to $u$. 
In particular, the system is called (weakly) regular or Fuchsian along $Y$
if we can take such $P$ of order less than or equal to the degree
of $b(s,x'')$ in $s$. 
Even in this case, it is not reasonable to assume $b(s,x)$ is an indicial
polynomial, that is, a monomic polynomial in $J_Y(u)_\pp \subset K[x]_\pp[s]$
of the smallest degree.

\begin{example}\rm
  Set $n = d=1$ and 
  $M = D_{n+d}u$ with $\partial_t^2u = (\partial_t + \partial_x^2)u = 0$. 
Then the indicial polynomial of $u$ along $Y$ at any maximal ideal $\pp$   
of $K[x]$ is $s$, but its associated operator is $\partial_x^2$.
On the other hand, $\theta(\theta-1)u = 0$ holds with the associated operator
$0$. 
\end{example}
In case $\pp$ is a maximal ideal,
the present author \cite{OakuRegular}
proposed an algorithm to compute, and to determine whether there exist, 
$b(s,x) \in J_Y(u)_\pp$ of the smallest degree and 
$P \in V_Y^{-1}(D_{n+d})$ with $\ord\, P \leq \deg_sb(s,x)$
for which (\ref{eq:bP}) is satisfied. 

\subsection{Computation of the inverse image}

Set $X := K^{n+d} \ni (x,t)$, $Y := K^n \times \{0\} \subset K^{n+d}$
and $\iota : Y \rightarrow X$ be the natural embedding. 
Set
\[
\DYX := D_{n+d}/(t_1D_{n+d} + \cdots + t_dD_{n+d}),
\]
which is a $(D_n,D_{n+d})$-bimodule. 
Let $M$ be a finitely generated left $D_{n+d}$-module. 
Then the (algebraic) inverse image $\mathbb{L}\iota^*(M)$ of $M$ with respect to $\iota$
is defined by
\[
\mathbb{L} \iota^*(M) := \DYX \stackrel{\mathbb{L}}{\otimes}_{D_{n+d}}M 
\]
in the derivied category $D^-(\mathrm{Mod}(D_n))$. 
It is related to the analytic version of the inverse image defined in
Section \ref{section:inv} by
\[
\Dsc_Y\otimes_{D_n}\mathbb{L}\iota^*(M) \simeq
\mathbb{L}\iota^*(\Dsc_X\otimes_{D_{n+d}}M)
\]
if $K$ is a subfield of $\C$. 
Let us state a generalization of the algorithm for $\mathbb{L}\iota^*(M)$
given in \cite{OakuAAM}, \cite{OTalgDmod} by using the $b$-function, which is
an algebraic version of Theorem \ref{th:inv}.
However, a little care must be taken if we replace the $b$-function by
an indicial polynomial; we need localization in general. 

First construct a (partial) free resolution 
\begin{equation}\label{eq:algresolution}
  (D_{n+d}^{r_{d+1}})_p
  \stackrel{\psi_{d+1}}{\longrightarrow} 
 (D_{n+d}^{r_{d}})_p 
\stackrel{\psi_{d}}{\longrightarrow} 
\cdots 
\stackrel{\psi_2}{\longrightarrow} (D_{n+d}^{r_1})_p
\stackrel{\psi_1}{\longrightarrow} (D_{n+d}^{r_0})_p
\stackrel{\varphi}{\longrightarrow}M \longrightarrow 0
\end{equation}
of  $M$  together with shift vectors 
$\mvec_1 \in \Z^{r_1}$, $...$, $\mvec_l \in \Z^{r_l}$ such that 
\[
\psi_{j+1}(V_Y^k[\mvec_{j+1}](D_{n+d}^{r_{j+1}}))
\subset V_Y^k[\mvec_j](D_{n+d}^{r_j})
\] 
holds for $j\geq 0$ with $\mvec_0$ the zero vector and that 
(\ref{eq:algresolution}) induces an exact sequence
\[
V_Y^k[\mvec_{d+1}](\Dsc_X^{r_{d+1}})
\stackrel{\psi_{d+1}}{\longrightarrow} 
V_Y^k[\mvec_d](\Dsc_X^{r_{d}})
\stackrel{\psi_{d}}{\longrightarrow} 
\cdots
\stackrel{\psi_1}{\longrightarrow}
V_Y^k(D_{n+d}^{r_0})
\stackrel{\varphi}{\longrightarrow}
V_Y^k(M) \rightarrow 0
\]
with $V_Y^k(M) := \varphi(V_Y^k(D_{n+d}))$ 
for any integer $k$.
See \cite{OTalgDmod},\cite{OTresolution} for algorithms. 

\begin{theorem}\label{th:algebraic_inv}
  In the notation above, set $u_i = \varphi(e_i)$, where $e_1,\dots,e_{r_0}$
  are the unit vectors.
  Let $\pp$ be a prime ideal of $K[x]$. 
  Assume that there exists an indicial polynomial
  $b_i(s,x) \in K[x]_\pp[s]$ of $u_i$ along $Y$ at $\pp$ for each $i$.
  Let $k_0 \leq k_1$ be integers (or $k_0 = -\infty$)
  such that the specialization of
  $b_i(j,x)$ ($i=1,\dots,r_0$) to $\kappa(\pp)$ do not vanish
  if an integer $j$   satsifies $j \leq k_0$ or $j > k_1$.
 Define the complex $V^\bullet$ by
 \[
0 \rightarrow
  \frac{V_Y^{k_1}[\mvec_{d+1}](D_{Y\rightarrow X}^{r_{d+1}})}
{V_Y^{k_0}[\mvec_d](D_{Y\rightarrow X}^{r_d})}
\stackrel{\overline\psi_d}{\longrightarrow}
\cdots
\stackrel{\overline\psi_2}{\rightarrow}
\frac{V_Y^{k_1}[\mvec_1](D_{Y\rightarrow X}^{r_1})}
{V_Y^{k_0}[\mvec_1](D_{Y\rightarrow X}^{r_1})}
\stackrel{\overline\psi_1}{\rightarrow}
\frac{V_Y^{k_1}(D_{Y\rightarrow X}^{r_0})}
{V_Y^{k_0}(D_{Y\rightarrow X}^{r_0})}
\rightarrow 0  .
\]
Then $K[x]_\pp\otimes_{K[x]}(\tau^{\geq -d}V^\bullet)$ is isomorphic to
$K[x]_\pp\otimes_{K[x]}\mathbb{L}\iota^*(M)$ 
in $D^{-}(\mathrm{Mod}(K[x]_\pp\otimes_{K[x]}D_n))$.
  In particular, if the specialization of $b_i(j,x)$ to $\kappa(\pp)$
  does not vanish for any $j \in \Z$ and
  any $i=1,\dots,r_0$, then $K[x]_\pp\otimes_{K[x]}\mathbb{L}\iota^*(\Msc) = 0$ holds
in $D^{-}(\mathrm{Mod}(K[x]_\pp\otimes_{K[x]}D_n))$.
\end{theorem}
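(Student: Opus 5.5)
The plan is to transplant the proof of Theorem \ref{th:inv} to the localized algebraic setting. The role of $(\Osc_Y)_p$ is now played by the local ring $K[x]_\pp$, and the evaluation $b(k,p)\neq 0$ becomes non-vanishing of the specialization of $b(k,x)$ to $\kappa(\pp)$, i.e.\ $b(k,x)\notin K[x]_\pp\pp$. In that case $b(k,x)$ is a unit of $K[x]_\pp$.

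Since $K[x]_\pp$ is flat over $K[x]$, localization is exact. It therefore commutes with taking cohomology, with the Koszul complexes $K^\bullet(\cdot,t)$ and with the $V$-filtered resolution (\ref{eq:algresolution}). Hence $K[x]_\pp\otimes_{K[x]}\mathbb{L}\iota^*(M)\simeq K^\bullet(M_\pp,t)$ with $M_\pp:=K[x]_\pp\otimes_{K[x]}M$. The total complexes built from $V_Y^k$ and $\gr_Y^k$ of the localized resolution compute $K^\bullet(V_Y^k(M)_\pp,t)$ and $K^\bullet(\gr_Y^k(M)_\pp,t)$, exactly as in Section \ref{section:inv}.

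The reduction is then the same as before. It suffices to show that $K^\bullet[k](\{\gr_Y^j(M)_\pp\},t)$ is exact whenever $b(k,x)$ specializes to a nonzero element of $\kappa(\pp)$, where $b:=b_1\cdots b_{r_0}$. Granting this, the filtration pieces with index $\le k_0$ contribute an acyclic subcomplex. Those with index $>k_1$ do not change cohomology, since the $V$-filtration on $K^\bullet(M_\pp,t)$ is exhaustive and each quotient step is acyclic; this is the same limit argument as in the analytic case. Thus $K^\bullet(M_\pp,t)$ is quasi-isomorphic to the localization of $V^\bullet$. Since $H^j$ of the inverse image vanishes for $j<-d$, we may replace $V^\bullet$ by $\tau^{\geq -d}V^\bullet$.

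For the key exactness, first establish the algebraic analogue of Lemma \ref{lemma:bsx} in $K[x]_\pp\otimes V_Y(D_{n+d})$. Choose $a(x)\notin\pp$ with $a\,b_i\in J_Y(u_i)$ over $K[x]$, and divide by $a$ after localizing. The order-reduction argument with $[a_l,P]$ goes through verbatim, because coefficients in $K[x]_\pp$ depend only on $x$ and commute with $\theta$. One then has $b(\theta+k,x)^j v\in V_Y^{k-1}(M)_\pp$ for $v\in V_Y^k(M)_\pp$.

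For $d=1$, follow the proof of Theorem \ref{th:inv}. Write $b(\theta+k,x)^j-b(k,x)^j=-t_1Q$ and $b(\partial_{t_1}t_1+k,x)^j-b(k,x)^j=Q't_1$, which gives injectivity and surjectivity of $t_1:\gr^{k+1}\to\gr^k$ after inverting the unit $b(k,x)^j\in K[x]_\pp$. Here the unit property replaces the analytic statement $b(k,p)\neq0$. The case $d>1$ follows by induction via the mapping cone of $t_d$ on the Koszul complex in $t_1,\dots,t_{d-1}$, as in \cite{OTalgDmod}. The vanishing statement is the special case $k_0=-\infty$ with an empty range, which makes $V^\bullet=0$.

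The main obstacle I expect is bookkeeping rather than ideas. We must check that the localized $V$-filtered resolution still induces exact sequences on each $V_Y^k$ and $\gr_Y^k$, which holds by flatness. We must also check that the lemma's $j$ can be found uniformly for the finitely many generators $u_i$ over $K[x]_\pp$. If uniformity is problematic, I would take $j$ to be the maximum over the finitely many operators expressing $v$ in terms of the $u_i$.
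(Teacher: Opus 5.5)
Your proposal is correct and takes essentially the same route as the paper. The paper proves this theorem simply by saying the proof of Theorem~\ref{th:inv} carries over to $K[x]_\pp\otimes_{K[x]}D_{n+d}$, with the condition $b(k,p)\neq 0$ replaced by the fact that $b(k,x)$ is a unit of the local ring $K[x]_\pp$ once its specialization to $\kappa(\pp)$ is nonzero; that is exactly what you do.

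One small slip concerns the vanishing statement. With $k_0=-\infty$ the quotients $V_Y^{k_1}/V_Y^{k_0}$ are not empty in general. Take $k_0=k_1$ instead, or keep $k_0=-\infty$ but choose $k_1$ below every entry of the shift vectors $\mvec_0,\dots,\mvec_{d+1}$, so that $V^\bullet=0$.
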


The proof is similar to that of Theorem \ref{th:inv}.
If $K$ is a subfield of $\C$ and $\pp = Z_\C(p)$ with some
$p\in \C^n$, then we have
\[
\mathbb{L}\iota^*(\Dsc_X\otimes_{D_{n+d}}M)_p \simeq 
(\Osc_Y)_p\otimes_{K[x]_\pp}(K[x]_\pp\otimes_{K[x]}\tau^{\geq -d}V^\bullet)
\simeq (\Osc_Y)_p\otimes_{K[x]}\tau^{\geq -d}V^\bullet,
\]
and consequently
\[
H^j(\mathbb{L}\iota^*(D_X\otimes_{D_{n+d}}M)) \simeq (\Osc_Y)_p\otimes_{K[x]}
  H^j(\tau^{\geq -d}V^\bullet)
\]
holds for any $j$.

\subsection{Examples}

Computations of the following examples are done by using a
computer algebra syatem Risa/Asir
(https://www.math.kobe-u.ac.jp/Asir/asir.html).

\begin{example}\label{ex:nonholonomic}\rm
  Let $K$ be a subfield of $\C$. 
  Set $X := K^3 \ni (x,y,t)$, $Y := K^2 \times \{0\} = \{t=0\}$
  and let $\iota : Y \rightarrow X$ be the natural embedding. 
Define a left $D_3$-module $M$ by
  \[
M := D_3u, \quad P_1u = P_2u = 0
  \]
with 
  \[
  P_1 := (x^2-2)(t\partial_t^2 - (x+y)\partial_t),
  \quad P_2 := 2x(\partial_x-\partial_y) + 1.
  \]
  The characteristic variety (see \cite{OakuChar} for an algorithm) of $M$ is
  \begin{multline*}
  \{(x,y,t;\xi dx+\eta dy + \tau dt)\in T^*X \mid x(\xi - \eta) = t\tau = 0\}
\\
= \{ x = t = 0\} \cup \{x = \tau = 0\} \cup \{\xi - \eta = t = 0\} 
\cup \{\xi - \eta = \tau = 0\} ,
  \end{multline*}
  which is of dimension $4$. 
  A Gr\"obner basis of the left ideal $I:= D_3P_1 + D_3P_2$ of $D_3$
  with respect to an ordering adapted to the filtration
  $\{V_Y^j(D_3)\}$ is $G = \{P_2,P_3,P_4\}$ with
\begin{align*}
  P_3 &=  t\partial_t^2 - (x+y)\partial_t,
  \\
  P_4 &= (-4\dx + 4\dy + 3x)t\dt^2 + (4y\dx-4y\dy-3x^2-3yx-2)\partial_t
\end{align*}
and the interesection $\gr_Y^0(I) \cap D_2[s]$ is generated by
(the residue classes of)
\[
s^2 - (x+y+1)s, \quad 2x\dx-2x\dy+1.
\]
    The ideal $J_Y(u)$ is generated by a single element
  $b(s,x,y) := s^2 -(x+y+1)s$.
  Thus $b(s,x,y)$ is the unique indicial polynomial along $Y$
  at every prime ideal of $K[x,y]$.

  Let $\pp$ be a prime ideal of $K[x,y]$ such that
  the specialization $b_\pp(j)$ of $b(j,x,y)$ to $\kappa(p)$
  does not vanish for any integer $j \geq 1$. 
  For example, if $\pp = \langle x-a, y-b\rangle$  with $a,b \in K$,
  then this condition is equivalent to $a +b \neq 0,1,2,\dots$. 
  As another example, set $\pp = \langle x \rangle$.
  Then $b_\pp(j)$ is $j(j-y-1)$ as an element $\kappa(\pp) = K(y)$
  and the condition above is satisfied. 
Now under the assumption above, Theorem \ref{th:algebraic_inv} implies that
$K[x,y]_\pp\otimes_{K[x,y]}H^0(\mathbb{L}\iota^*(M))$ is
isomorphic to the cohomology group of the complex
\[
K[x,y]_\pp\otimes_{K[x,y]}V_Y^0[(0,1,1)](\DYX)
\stackrel{\psi}{\longrightarrow}
K[x,y]_\pp\otimes_{K[x,y]}V_Y^0(\DYX) \longrightarrow 0,
\]
where $\psi$ is the homomorphism induced by $(P_2,P_3,P_4)$.
This complex is equivalent to
\[
K[x,y]_\pp\otimes_{K[x,y]}D_2 \stackrel{\cdot P_2}{\longrightarrow}
K[x,y]_\pp\otimes_{K[x,y]}D_2 \longrightarrow 0. 
\]
Thus we have an isomorphism
\[
K[x,y]_\pp\otimes_{K[x,y]}H^0(\mathbb{L}\iota^*(M)) \simeq
K[x,y]_\pp\otimes_{K[x,y]}(D_2/D_2P_2)
\]
as left $D_2$-modules.
If $K = \C$ and $\pp = Z_\C(p)$ with $p\in X = \C^2$, then we have an
isomorphism
\[
H^0(\mathbb{L}\iota^*(\Dsc_X\otimes_{D_2}M))_p \simeq (\Dsc_X/\Dsc_XP_2)_p. 
\]
\end{example}

\begin{example}\label{ex:BMS}\rm
  Set $X = \Q^3 \ni (x,y,z)$ and $F$ be an ideal of $\Q[x,y,z]$
  generated by
  \[
   f_1 = x^3-y^2z, \quad f_2 = xy(z^2+1). 
  \]
  Let $\iota : X \rightarrow X \times \Q^2$ be the embedding defined by
  \[
  \iota(x,y,z) = (x,y,z,f_1(x,y,z),f_2(x,y,z))
  \]
  and set $Y := X \times \{0\} \subset X \times \Q^2$. 
  Then we have
\[
  \iota_*(\Q[x,y,z]))= D_5/I
\]
with $I$ being the left ideal of $D_5$ generated by
\[
t_1 - f_1,\quad t_2-f_2,\quad
\dx + 3x^2\partial_{t_1} + y(z^2+1)\partial_{t_2},\quad
\dy -2yz\partial_{t_1}+ x(z^2+1)\partial_{t_2},\quad
\dz - y^2\partial_{t_1} + 2xyz\partial_{t_2}.
\]
Let $u$ be the residue class of $1$ in $D_5/I$.
Then a reduced Gr\"obner basis $G$ of the ideal $J_Y(u)$ of $\Q[s,x,y,z]$
with respect to an ordering $\prec$ such that $x,y,z \prec s$
is generated by $31$ elements with degrees $0,1,2,3,4,5,6,7,8,9,10,11,13$
in $s$. Each element of $G$ has the form
$a(x,y,z)b(s)$ with $a\in \Q[x,y,z]$ and $b\in \Q[s]$.
Hence the $b$-function of $u$ along $Y$ at each prime ideal 
of $\Q[x,y,z]$ is also an indicial polynomial
(not that every indicial polynomial is the $b$-function). 
The radical of the ideal $I_k$ in Algorithm \ref{alg:indicial} is as follows:
\begin{align*}
  \sqrt{I_0} &= \langle x,y \rangle \cap \langle x,z\rangle
  \cap \langle f_1,z^2+1\rangle,
\\
  \sqrt{I_k} &= \langle x,y \rangle \quad (1\leq k \leq 5),
\\
\sqrt{I_k} &= \langle x,y,z^2+1 \rangle \quad (6\leq k \leq 12),
\\
\sqrt{I_{13}} &= \langle 1\rangle.
\end{align*}
Thus for a prime ideal $\pp$ of $\Q[x,y,z]$,
the degree of the $b$-function of $u$ along $Y$ at $\pp$ is
$0$ if $\langle x,y \rangle \not\subset \pp$ or
$\langle x,z \rangle \not\subset \pp$ or
$\langle f_1,z^2+1 \rangle \not\subset \pp$;
$1$ if $\langle x,y\rangle \not\subset \pp$ and
($\langle x,z \rangle \subset \pp$ or $\langle f_1,z^2+1 \rangle \subset \pp$);
$6$ if $\langle x,y,z^2+1\rangle \not\subset \pp$ and
$\langle x,y \rangle \subset \pp$;
$13$ if $\langle x,y,z^2+1 \rangle \subset \pp$. 
Set
\begin{align*}
V_0 &= \{(x,y,z) \in \C^3 \mid x=y=0\} \cup \{x=z=0\} \cup \{f_1 = z^2+1 = 0\},
\\
V_1 &= \{(x,y,z) \in \C^3 \mid x=y=0\},
\\
V_6 &= \{(x,y,z) \in \C^3 \mid x=y=z^2+1=0\}. 
\end{align*}
Then
by Theorem \ref{th:BMS}, Proposition \ref{prop:field_ext}, and Theorem \ref{th:comparison}, 
the degree of the Bernstein-Sato polynomial with respect to the coherent ideal
$\Osc_{\C^3}F$ (in the sense of \cite{BMS}) at $p \in \C^3$ is
$0$ if $p \in \C^3 \setminus V_0$, 
$1$ if $p \in V_0 \setminus V_1$,
$6$ if $p \in V_1 \setminus V_6$,
$13$ if $p \in V_6$. 
For this example, 
the $b$-function stays the same on each stratum and 
the Bernstein-Sato polynomial for $\Osc_{\C^3}F$ at each $p$
is given as follows (we set $p = (x,y,z)$):
\begin{align*}
  b_p(s) &= 1 \mbox{ if } p\not\in V_0,
  \\
  b_p(s) &= s+2 \mbox{ if } p \in V_0 \setminus V_1,
\\
  b_p(s) &= (s+1)^2(s+2)(s+3/2)(s+4/3)(s+5/3) \mbox{ if } p \in V_1 \setminus V_6,
  \\
  b_p(s) &= (s+1)^3(s+2)^2(s+3/2)^2(s+4/3)^2(s+5/3)^2(s+7/6)(s+11/6)
  \mbox { if } p \in V_6.
\end{align*}
\end{example}  

\begin{example}\label{ex:AHG}\rm
  Set $X = \C^3$ and
  let $\Msc_A(\beta) = \Dsc_Xu$ be the $A$-hypergeometric system 
for $A = \begin{pmatrix} 1 & 1 & 1 \\ 0 & 1 & 2 \end{pmatrix}$ 
with parameters $\beta = (\beta_1,\beta_2)$; i.e., 
\begin{align*}&
(x_1\partial_{1} + x_2\partial_{2} + x_3\partial_{3} - \beta_1)u 
= (x_2\partial_{2} + 2x_3\partial_{3} - \beta_2)u
= (\partial_{1}\partial_{3} - \partial_{2}^2)u = 0. 
\end{align*}
The singular locus (see \cite{OakuChar} for an algorithm)
of $\Msc_A(\beta)$ is 
\[
\{(x_1,x_2,x_3) \in\C^3 \mid x_1x_3(4x_1x_3-x_2^2) = 0\}. 
\]
Set $f = x_1x_3(4x_1x_3-x_2^2)$ and let
$\iota : X \rightarrow X \times \C$ be the embedding defined by
$\iota(x_1,x_2,x_3) = (x_1,x_2,x_3,f(x_1,x_2,x_3))$. 
Let us compute the $b$-function of $\iota_*(u)$ 
by regarding $\beta_1,\beta_2$ as
variables instead of constants. That is, we regard $M$ as a
left $D_3[\beta_1,\beta_2]$-module. 
A reduced Gr\"obner basis $G$ of $J_X(\iota_*(u))$, which is an ideal of
$\C[x_1,x_2,x_3,\beta_1,\beta_2]$, consists of $22$ elements
with degrees $0,2,3,4,5,6,7,9,10,12$ in $s$.
Contrary to Example \ref{ex:AHG}, some elements of $G$ do not
have the form $a(x,\beta)b(s,\beta)$. 
The radical of the ideal $I_k$ in Algorithm \ref{alg:indicial} is as follows:
\begin{align*}
  \sqrt{I_k} &= \langle x_1 \rangle \cap \langle x_3\rangle
  \cap \langle 4x_1x_3-x_2^2\rangle \quad (k =0,1),
\\
\sqrt{I_2} &= \langle x_1,x_2 \rangle \cap \langle x_1, x_3\rangle
  \cap \langle x_2, x_3 \rangle, 
  \\
\sqrt{I_k} &= \langle x_1,x_2 \rangle \cap \langle x_2, x_3\rangle
\quad (3 \leq k \leq 6),
\\
\sqrt{I_k} &= \langle x_1,x_2,x_3 \rangle \quad (7\leq k \leq 11),
\\
\sqrt{I_{12}} &= \langle 1\rangle.
\end{align*}
Since the generators of these ideals do not involve $\beta_1,\beta_2$,
the stratification of $X \times \C^2$ with respect to the degree of
an indicial polynomial is the direct product of that in $X$ and $\C^2$.  
An indicial polynomial $b_p(s)$ of $\iota_*(u)$ as an element of
$D_3[\beta_1,\beta_2]$-module $\iota_*(M)$ along $X \simeq X \times \{0\}$
at $(p,b) = (p_1,p_2,p_3,b_1,b_2) \in X \times \C^2$,
which is the point
corresponding to a maximal ideal of $\C[x,y,z,\beta_1,\beta_2]$
is as follows: 
\begin{align*}
  b_p(s) &= 1 \mbox{ if } f(p) \neq 0,
  \\
  b_p(s) &= s(s-\beta_1-1/2) \mbox{ if } f(p) = 0 \mbox{ and }
  p_1p_3 \neq 0, 
  \\
  b_p(s) &= s(s-\beta_1+\beta_2)(s+\beta_1-\beta_2)
  \mbox{ if } p_2 \neq 0 \mbox{ and } p_1 = p_3 = 0, 
  \\
  b_p(s) &= s(s+\beta_1-\beta_2)(s-\beta_1-1/2)(s-\beta_2/4)
  (s-\beta_2/4-1/2)(s-\beta_2/4-1/4)
 \\&\quad
  (s-\beta_2/4+1/4)
   \mbox{ if } p_1\neq 0 \mbox{ and } p_2 = p_3=0,
  \\
  b_p(s) &= s(s-\beta_1+\beta_2)(s-\beta_1-1/2)(s-\beta_1/2 +\beta_2/4)
  (s-\beta_1/2 +\beta_2/4-1/2)
\\&\quad
  (s-\beta_1/2+\beta_2/4-1/4)(s-\beta_1/2+\beta_2/4+1/4) 
\mbox{ if } p_3\neq 0 \mbox{ and } p_1=p_2=0,
\\
b_p(s) &=
s(s-\beta_1+\beta_2)(s+\beta_1-\beta_2)(s-\beta_1-1/2)(s-\beta_2/4)
\\&\quad
(s-\beta_2/4-1/2)(s-\beta_2/4-1/4)(s-\beta_2/4+1/4)(s-\beta_1/2+\beta_2/4)
\\&\quad
(s-\beta_1/2+\beta_2/4-1/2)(s-\beta_1/2+\beta_2/4-1/4)
(s-\beta_1/2+\beta_2/4+1/4) 
\\&\quad
\mbox{ if } p = (0,0,0). 
  \end{align*}
The $b$-function of $u$ with respect to the ideal $\Osc_Xf$
in the sense of Theorem \ref{th:BMS} at $p$ is
given by $b_p(s-1)$.

On the other hand, the indicial polynomial of $u$ along $x_1= 0$
at $(p,b)$ with $p_1=0$ is
$s(s-\beta_1+\beta_2)$, the indicial polynomial of $u$ along $x_3=0$
at $(p,b)$ with $p_3=0$ is $s(s+\beta_1-\beta_2)$, and the indicial polynomial of $u$ along $x_2=0$ at $(p,b)$ with $p_2=0$ is $s(s-1)$.
In order to compute the indicial polynomial along $g:= 4x_1x_3-x_2^2 = 0$,
now let $\iota : X \rightarrow X \times \C^2$ be the embedding
defined by $g$ instead of $f$. Then the indicial polynomial of
$\iota_*(u)$ along $X \times \{0\}$ at $(p,b)$ is $s(s-\beta_1-1/2)$
if $g(p) = 0$. Hence by Theorem \ref{th:embedding},
the indicial polynomial of $u$ along the hypersurface $g=0$
at $(p,b)$ with its non-singular point $p$ (i.e., $g(p) = 0$ and $p \neq 0$)
is $s(s-\beta_1-1/2)$. 
\end{example}


\end{document}